\theoremstyle{plain}
\newtheorem{theorem}{Theorem}
\newtheorem{lemma}[theorem]{Lemma}
\newtheorem{proposition}[theorem]{Proposition}
\newtheorem{corollary}[theorem]{Corollary}
\newtheorem{example}[theorem]{Example}
\newtheorem{definition}[theorem]{Definition}
\numberwithin{equation}{section}
\numberwithin{theorem}{section}
\newcommand{\dist}{dist}
\title{Regularity of the free boundary in the biharmonic obstacle problem}
\author{Gohar Aleksanyan}
\begin{document}

\maketitle

\begin{abstract}
In this article we use flatness improvement argument to study the regularity of the free boundary 
for the biharmonic obstacle problem with zero obstacle. Assuming that the solution is almost one-dimensional,
and that the non-coincidence set is an non-tangentially
accessible (NTA) domain,
we derive the $C^{1,\alpha}$-regularity of the free boundary in a small ball centered at the origin.

\par
From the $C^{1,\alpha}$-regularity of the free boundary we conclude that the solution to the biharmonic obstacle problem
is locally $ C^{3,\alpha}$ up to the free boundary, and therefore $C^{2,1} $.
In the end we study an example, showing that in general $ C^{2,\frac{1}{2}}$ is the best regularity that a 
solution may achieve in dimension $n \geq 2$.

\end{abstract}

\tableofcontents

\section{Introduction}

Let $ \Omega \subset \mathbb{R}^n$ be a given domain,  and  $\varphi \in C^2 (\overline{\Omega}), \varphi \leq 0
\textrm{ on } \partial \Omega$  be a given function, called an obstacle. Then the minimiser to the following functional 
\begin{equation} \label{J}
 J[u]= \int_\Omega \left(\Delta u(x) \right)^2 dx,
\end{equation}
over all functions $ u \in W_0^{2,2}(\Omega),$ such that $ u\geq \varphi$,  is called the solution to the biharmonic
obstacle problem with obstacle $\varphi$. The solution
satisfies the following variational
inequality
\begin{equation*}
\Delta^2 u\geq 0, ~ u \geq \varphi, ~ \Delta^2u \cdot (u-\varphi)=0.
\end{equation*}
\par
It has been shown in  \cite{Fr}  and \cite{Fr71} that the solution $u \in W^{3,2}_{loc}(\Omega)$, 
$ \Delta u \in L^\infty_{loc}(\Omega) $, and moreover $ u\in W^{2,\infty}_{loc}(\Omega)$, see also \cite{CF}. Furthermore,
in the paper \cite{CF},
the authors show that in dimension $n=2 $ the solution $ u\in C^2(\Omega)$ and that the free boundary 
$  \Gamma_u :=\partial \{ u= \varphi\}$ lies on a $C^1$-curve in a neighbourhood of the points
$x_0\in \Gamma_u$, such that $ \Delta u(x_0)> \Delta \varphi(x_0)$. 

\par
The setting of our problem is slightly different from the one in \cite{CF}, \cite{Fr71} and
\cite{Fr}. We consider a zero-obstacle problem with general nonzero boundary conditions. 
Let $ \Omega$ be a bounded domain in $ \mathbb{R}^n $ with smooth boundary. We consider the
problem of minimising the functional \eqref{J}
over the admissible set
\begin{equation*}
\mathscr{A}:=\left\lbrace u \in W^{2,2}(\Omega),~ u \geq 0, ~u=g>0, \frac{\partial u}{\partial \nu}=f 
\textrm{ on } \partial \Omega \right\rbrace .
\end{equation*}
The minimiser $u$ exists, it is unique. The minimiser is called the solution to the biharmonic obstacle problem. 
We will denote the free boundary by $ \Gamma_u := \partial \Omega_u \cap \Omega$, where $\Omega_u: = \{ u > 0\}$.

\par  
There are several important questions regarding the biharmonic obstacle problem that remain open. For example,
the optimal regularity of the solution, the characterisation of blow-ups at free boundary points, etc.
In this article we focus on the regularity of the free boundary for an $n$-dimensional biharmonic obstacle problem,
assuming that the solution is close to the one-dimensional solution $\frac{1}{6}(x_n)_+^3$. In \cite{ALS}, using flatness 
improvement argument, the author, John Andersson,  shows that
the free boundary in the $p$-harmonic obstacle problem is a $C^{1,\alpha}$ graph in a neighbourhood of the points where
the solution is almost one-dimensional.
We apply the same technique in order to study 
the regularity of the free boundary in the biharmonic obstacle problem.

\par
In Section 2
we study the basic properties of the solution in the new setting, and show that it is locally in 
$W^{3,2} \cap C^{1,\alpha}$. The material in this section is essentially known, and it has been adjusted to the setting of our problem.

\par
In Section 3 we introduce the class $ \mathscr{B}_\kappa^\varrho(\varepsilon)$ of solutions to the biharmonic obstacle problem, 
that are close to the one-dimensional solution
$  \frac{1}{6} (x_n)_+^3$.
Following \cite{ALS}, we show 
that if $\varepsilon$ is small enough, then there exists a rescaling $u_s(x)= \frac{u(sx)}{s^3}$, such that
\begin{equation*}
\lVert \nabla'u_s \rVert_{W^{2,2}(B_2)} \leq \gamma \lVert \nabla'u \rVert_{W^{2,2}(B_2)} \leq  \gamma \varepsilon 
\end{equation*}
in a normalised coordinate system, where
$ \nabla'_\eta := \nabla -\eta (\eta \cdot \nabla), \nabla':= \nabla'_{e_n}$, and
$\gamma <1$ is a constant. Repeating the argument for the rescaled solutions,
$ u_{s^k}$, we show that there exists a unit vector $ \eta_0 \in \mathbb{R}^n$, such that 
\begin{equation}
\frac{ \lVert \nabla'_{\eta_0} u_{s^k} \rVert_{W^{2,2}(B_2)}}{\Arrowvert D^3 u_{s^k}\rVert_{L^2(B_1)}} \leq \gamma^k \varepsilon  
\end{equation}
for some $ 0<s<\gamma<1$.
Then the $C^{1,\alpha}$-regularity of the free boundary in a neighbourhood of the origin follows
via a standard iteration argument.

\par
From the $C^{1,\alpha}$-regularity of the free boundary it follows that $ \Delta u \in C^{1,\alpha}$ up to the free 
boundary.
We move further and show that $ u $ is $  C^{3,\alpha} $ up to the free boundary.
Thus a solution $ u\in \mathscr{B}_\kappa^\varrho(\varepsilon)$ is locally $C^{2,1}$, which is 
the best regularity that a solution may achieve.
We provide a two-dimensional counterexample to the $C^{2,1}$-regularity, showing that 
without our flatness assumptions there
exists a  solution that is
$ C^{2,\frac{1}{2}}$ but is not  $ C^{2,\alpha}$ for $  \alpha >\frac{1}{2} $. Hence $ C^{2,\frac{1}{2}}$ is the best regularity that a solution may 
achieve in dimension $ n \geq 2$.

\subsection*{Acknowledgements}

First of all I  wish to thank my advisor John Andersson for suggesting this research project and for
all his help and encouragement.  \\
I am grateful to Erik Lindgren for reading preliminary versions of the manuscript and thereby improving it significantly. \\
I would like to thank the referee for pointing out a gap in one of the proofs, which has been corrected in the current version.

\section{The obstacle problem for the biharmonic operator}

In this section we study the basic properties of the solution 
 to the biharmonic obstacle problem with zero obstacle. Most of the material in this section is known for the biharmonic obstacle problem with a general obstacle, and zero boundary conditions. We need to have quantitive estimates for the analysis of our problem, therefore the proofs are included.

 \par
 First the existence and uniqueness of the solution to the biharmonic obstacle problem are shown.
 Then we show that the solution is in the space
 $ W_{loc}^{3,2} \cap C_{loc}^{1, \alpha}$. 

\subsection{Existence, uniqueness and $ W^{3,2}$-regularity of the solution}

Let us start with the proof of existence and uniqueness of the minimiser of functional \eqref{J}.
Throughout the discussion we denote by $ B_R(x_0) $ the open ball in $ \mathbb{R}^n$, centered
at $ x_0 \in \mathbb{R}^n$, with radius $R>0$, and $ B_R:= B_R(0)$, $ B_R^+:= \{x_n >0\} \cap B_R$.

\begin{lemma} \label{1}
Let $\Omega $ be an open bounded subset of $\mathbb{R}^n$ with a smooth boundary.
Then the functional \eqref{J} admits a unique minimiser in the set $ \mathscr{A} $.
\end{lemma}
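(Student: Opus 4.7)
The plan is to apply the direct method of the calculus of variations, which works cleanly here because the functional $J$ is convex, coercive in an appropriate sense, and the admissible set $\mathscr{A}$ is convex and weakly closed. First I would verify that $\mathscr{A} \neq \emptyset$: since $\partial \Omega$ is smooth, by a standard trace/extension theorem there exists $w \in W^{2,2}(\Omega)$ with $w = g$ and $\partial_\nu w = f$ on $\partial\Omega$; by adding a suitable smooth non-negative cutoff supported away from $\partial\Omega$ (or using the strict positivity $g > 0$ together with local modification near the interior), one can ensure $w \geq 0$ in $\Omega$, so $w \in \mathscr{A}$. Note also that $\mathscr{A}$ is convex, since both the pointwise constraint $u \geq 0$ and the affine boundary conditions are preserved under convex combinations, and that $J \geq 0$, so $m := \inf_{\mathscr{A}} J$ is finite.

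Next, take a minimizing sequence $\{u_k\} \subset \mathscr{A}$ and write $u_k = w + v_k$ with $v_k \in W^{2,2}_0(\Omega)$. From $J[u_k] \to m$ and the triangle inequality one gets a uniform bound on $\lVert \Delta v_k \rVert_{L^2(\Omega)}$. For functions in $W^{2,2}_0$ on a smooth bounded domain, integration by parts yields $\lVert D^2 v_k \rVert_{L^2} = \lVert \Delta v_k \rVert_{L^2}$, which combined with Poincaré's inequality controls the full $W^{2,2}$-norm of $v_k$, and thus of $u_k$. By Banach--Alaoglu a subsequence, not relabeled, converges weakly in $W^{2,2}(\Omega)$ to some $u^\ast$; by the compact embedding $W^{2,2}(\Omega) \hookrightarrow W^{1,2}(\Omega)$ and compactness of the trace operator, $u^\ast = g$ and $\partial_\nu u^\ast = f$ on $\partial \Omega$, and $u^\ast \geq 0$ a.e., so $u^\ast \in \mathscr{A}$. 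Since $J$ is convex and strongly continuous on $W^{2,2}(\Omega)$, it is weakly lower semicontinuous, hence
\[
J[u^\ast] \;\leq\; \liminf_{k \to \infty} J[u_k] \;=\; m,
\]
which proves existence.

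For uniqueness, suppose $u_1, u_2 \in \mathscr{A}$ both realize $m$. Convexity gives $(u_1+u_2)/2 \in \mathscr{A}$, and the parallelogram identity for the $L^2$-inner product applied to $\Delta u_1, \Delta u_2$ yields
\[
J\!\left(\tfrac{u_1+u_2}{2}\right) \;=\; \tfrac{1}{2}J[u_1] + \tfrac{1}{2}J[u_2] \;-\; \tfrac{1}{4}\int_\Omega \bigl(\Delta u_1 - \Delta u_2\bigr)^2 dx.
\]
Since the left-hand side is $\geq m$ while the first two terms sum to $m$, we deduce $\Delta(u_1 - u_2) = 0$ a.e. in $\Omega$. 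Now $u_1 - u_2 \in W^{2,2}_0(\Omega)$ has zero trace and zero normal trace on $\partial \Omega$, so the $W^{2,2}_0$-identity $\lVert D^2(u_1-u_2)\rVert_{L^2} = \lVert \Delta(u_1 - u_2)\rVert_{L^2} = 0$ forces $u_1 \equiv u_2$.

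The only non-routine step is verifying coercivity, i.e.\ producing a uniform $W^{2,2}$-bound on the minimizing sequence; the standard trick of subtracting an $\mathscr{A}$-admissible reference function $w$ to reduce the boundary data to zero and then invoking the identity $\lVert D^2 v\rVert_{L^2} = \lVert \Delta v\rVert_{L^2}$ for $v \in W^{2,2}_0$ on a smooth domain handles this. All remaining ingredients (weak compactness, weak lower semicontinuity of convex continuous functionals, weak closedness of a convex norm-closed set, uniqueness via strict convexity modulo the biharmonic kernel on $W^{2,2}_0$) are textbook.
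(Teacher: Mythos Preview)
Your proposal is correct and follows essentially the same direct-method argument as the paper: bound a minimizing sequence in $W^{2,2}$ by subtracting an admissible reference function and using that $\lVert \Delta v\rVert_{L^2}$ controls $\lVert v\rVert_{W^{2,2}}$ for $v\in W^{2,2}_0(\Omega)$, extract a weak limit, check admissibility, invoke weak lower semicontinuity, and conclude uniqueness from $\Delta(u_1-u_2)=0$ with $u_1-u_2\in W^{2,2}_0$. The only cosmetic differences are that the paper verifies the boundary conditions of the weak limit via Mazur's theorem (weak closedness of the closed subspace $W^{2,2}_0$) rather than through trace compactness, and it phrases the uniqueness step without writing out the parallelogram identity explicitly; neither difference is substantive.
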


\begin{proof}
Here  we use the standard terminology from  \cite{evans}.
Let us start with an observation that the functional $ J $ is weakly lower semicontinuous,
i.e. given a sequence $\{ u_k \}$  converging weakly to a function $u \in W^{2,2}(\Omega) $, then
\begin{equation} \label{lsc}
 \liminf_{k\rightarrow \infty} J[u_k] \geq J[u]. 
\end{equation}

\par
Upon passing to a subsequence, we may assume that
\begin{equation*} 
\liminf_{k\rightarrow \infty} J[u_k] = \lim_{k\rightarrow \infty} J[u_k].
\end{equation*}
According to the definition of weak convergence in $W^{2,2}(\Omega)$,
$ \Delta u_k $  converges to $\Delta u $ weakly in $L^{2}(\Omega) $, hence
\begin{equation*}
\lim_{k\rightarrow \infty} {\int \Delta u_k \Delta u} = \int (\Delta u )^2,
\end{equation*}
and
the inequality
\begin{equation*}
\int (\Delta u )^2+ \int (\Delta u_k)^2-2\int \Delta u_k \Delta u =\int ( \Delta u_k -\Delta u )^2\geq 0
\end{equation*}
implies
\begin{equation*}
 \int (\Delta u_k)^2 \geq 2\int \Delta u_k \Delta u-\int (\Delta u )^2,
\end{equation*}
after passing to a limit as $k \rightarrow \infty$, we get the desired inequality, \eqref{lsc}.

\par
Next we take a minimising sequence  $\{ u_k \} \subset \mathscr{A} $, and show that it converges weakly
to some function $u$ in $W^{2,2}(\Omega)$ through a subsequence, and that $u$ is an admissible function.
Define 
\begin{equation*}
m:=\inf_{v \in \mathscr{A}} \int (\Delta v)^2,
\end{equation*}
then
\begin{equation*}
\lim_{k\rightarrow \infty} J[u_k]= m .
\end{equation*}
Let us note that $ J[u_k]={\Arrowvert  \Delta u_k  \Arrowvert}^2_{L^2}$, so $\Delta u_k$
is bounded in $L^2$, and since $u_k-\omega =0$ and
$\frac{\partial (u_k-\omega)}{\partial n }=0 $ on $ \partial \Omega $ in the trace sense
for any fixed $ \omega \in \mathscr{A} $, the sequence is
bounded in $W^{2,2}(\Omega)$. Hence it has a subsequence which converges weakly in $W^{2,2}$,
we will keep the notation, call it $\{ u_k \} $. We want to show that the limit function
$ u \in \mathscr{A}$.
According to the Sobolev embedding theorem 
 $\{ u_k \}$ converges to $u $ strongly in $L^2$ up to a subsequence, hence upon passing to a new subsequence
 $ u_k \rightarrow u $
a.e. in $ \Omega $. The latter proves that $u \geq 0$ a.e..

\par
It remains to show that $u$ satisfies the boundary conditions.
For any $\omega \in \mathscr{A}$,  $u_k - \omega \in W_0^{2,2}(\Omega)$, since
$W_0^{2,2}(\Omega)$ is a closed, linear subspace of $W^{2,2}(\Omega)$, it is weakly closed, according to
Mazur's theorem (\cite{evans}, pp. 471 and 723). This proves that $u - \omega \in W_0^{2,2}(\Omega)$ and therefore
$ u \in \mathscr{A} $.

\par
According to \eqref{lsc}, 
$ m \geq J[u]$, but the reversed inequality is also true since
$u$ is admissible and according to our choice of the sequence $\{ u_k \}$. Thus $m= J[u]$, and
$u$ is a minimiser.

\par
The uniqueness of the minimiser follows from the convexity of the functional:
assuming that both $u$  and $v$ are minimisers, it follows that $\frac{u+v}{2}$ is also admissible, so
\begin{equation*}
 J\left[ \frac{u+v}{2}\right] \geq \frac{J[u]+J[v]}{2},
\end{equation*}
 but the reversed inequality is also true
with equality if and only if $\Delta u=\Delta v $.
Thus if $ u$ and $v$ are both minimisers in $\mathscr{A}$ then 
 $\Delta (u- v)=0 $ and   $u-v \in W_0^{2,2}(\Omega)$, which implies that  $u=v$ in $\Omega$.

\end{proof}

\par
Now we turn our attention to the regularity of the solution to the biharmonic obstacle problem.

\begin{proposition} \label{p1}
 Let $u$ be the solution to the biharmonic obstacle problem in the unit ball $B_1$, then
\begin{equation*}
\Arrowvert \Delta u \rVert_{W^{1,2}(B_{\frac{1}{2}})}
\leq C \Arrowvert u \rVert_{W^{2,2}(B_1)},
\end{equation*}
where the constant $C $ depends only on the space dimension.
\end{proposition}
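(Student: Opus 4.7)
The plan is to approximate the biharmonic obstacle problem by a smooth penalised problem, establish the desired estimate uniformly for the approximations by a difference-quotient / Caccioppoli argument exploiting the monotonicity of the penalty, and then pass to the limit.

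For $\varepsilon>0$ I would pick a smooth nonincreasing $F_\varepsilon\in C^\infty(\mathbb{R};[0,\infty))$ with $F_\varepsilon\equiv 0$ on $[0,\infty)$ and $F_\varepsilon(t)\to+\infty$ as $\varepsilon\to 0^+$ for $t<0$ (a mollification of $t\mapsto t^-/\varepsilon$ works), and let $u_\varepsilon$ minimise
\begin{equation*}
J_\varepsilon[v]:=\int_{B_1}(\Delta v)^2\,dx+2\int_{B_1}\Phi_\varepsilon(v)\,dx,\qquad \Phi_\varepsilon(t):=\int_t^0 F_\varepsilon(s)\,ds\ge 0,
\end{equation*}
among $v$ agreeing with $u$ on $\partial B_1$ in the trace sense. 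Since $J_\varepsilon[u_\varepsilon]\le J_\varepsilon[u]=\int(\Delta u)^2$, $u_\varepsilon$ is uniformly bounded in $W^{2,2}(B_1)$, and an argument essentially as in Lemma~\ref{1} shows $u_\varepsilon\to u$ in $W^{2,2}(B_1)$ as $\varepsilon\to 0$. The Euler--Lagrange equation reads $\Delta^2 u_\varepsilon = F_\varepsilon(u_\varepsilon)$ in $B_1$, and by elliptic regularity $u_\varepsilon$ is as smooth as we need.

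The main step is the uniform interior estimate $\|\Delta u_\varepsilon\|_{W^{1,2}(B_{1/2})}\le C\|u_\varepsilon\|_{W^{2,2}(B_1)}$. The decisive observation is that, because $F_\varepsilon$ is nonincreasing, for every direction $e_i$ and $|h|$ small
\begin{equation*}
\delta^h_i F_\varepsilon(u_\varepsilon)(x)\cdot \delta^h_i u_\varepsilon(x)\le 0\quad\text{a.e.\ in } B_{1-|h|},
\end{equation*}
where $\delta^h_i v(x):=(v(x+he_i)-v(x))/h$. I would apply $\delta^h_i$ to the PDE and test against $\eta^2\,\delta^h_i u_\varepsilon$ for a cutoff $\eta\in C_c^\infty(B_1)$ with $\eta\equiv 1$ on $B_{1/2}$; integrating by parts twice produces
\begin{equation*}
\int\eta^2(\Delta\delta^h_i u_\varepsilon)^2\,dx+R_h=\int\eta^2\,\delta^h_i F_\varepsilon(u_\varepsilon)\cdot\delta^h_i u_\varepsilon\,dx\le 0,
\end{equation*}
where $R_h$ collects cross terms containing $\nabla\eta$ and $\Delta\eta$. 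A standard Caccioppoli-type absorption (with nested cutoffs, so that the remainder is bounded by $\tfrac12\int\eta^2(\Delta\delta^h_i u_\varepsilon)^2+C\|\delta^h_i u_\varepsilon\|_{W^{1,2}(B_1)}^2$) together with the elementary bound $\|\delta^h_i u_\varepsilon\|_{W^{1,2}}\le C\|u_\varepsilon\|_{W^{2,2}}$ yields $\|\Delta\delta^h_i u_\varepsilon\|_{L^2(B_{1/2})}\le C\|u_\varepsilon\|_{W^{2,2}(B_1)}$ uniformly in $h$. Letting $h\to 0$ and summing over $i=1,\ldots,n$ gives $\|\nabla\Delta u_\varepsilon\|_{L^2(B_{1/2})}\le C\|u_\varepsilon\|_{W^{2,2}(B_1)}$, which together with the trivial $L^2$-bound on $\Delta u_\varepsilon$ proves the claim for $u_\varepsilon$.

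Finally, weak compactness gives $\Delta u_{\varepsilon_k}\rightharpoonup \Delta u$ in $W^{1,2}(B_{1/2})$ along a subsequence, and weak lower semicontinuity of the norm combined with $u_\varepsilon\to u$ in $W^{2,2}(B_1)$ transfers the uniform estimate to $u$. The main technical obstacle in this plan is the bookkeeping of the cross terms $R_h$ under the two integrations by parts: one must arrange the cutoffs so that after absorbing the principal quadratic term, the leftover is genuinely controlled by $\|u_\varepsilon\|_{W^{2,2}(B_1)}$ and not by the unknown $\|\Delta\delta^h_i u_\varepsilon\|_{L^2(B_1)}$. This is a standard manoeuvre, but requires care.
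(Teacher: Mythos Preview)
Your penalisation approach is correct, but it is not the route the paper takes. The paper works directly with the variational inequality, without any approximation: for a cutoff $\zeta$ and small $t>0$, the function $u+t\zeta^2(u_{i,h}-u)=(1-t\zeta^2)u+t\zeta^2 u_{i,h}$ is a convex combination of two nonnegative functions, hence admissible; the analogous competitor for the translated solution $u_{i,h}$ is also admissible. Differentiating the two resulting energy inequalities in $t$ and summing yields exactly the inequality $\int\zeta^2(\Delta\delta^h_i u)^2\le R_h$, after which the cross terms are handled as you describe (Young's inequality for the $\nabla\zeta$-term, and a discrete integration by parts for the $\Delta\zeta^2$-term). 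In other words, the paper obtains the sign condition you extract from the monotonicity of $F_\varepsilon$ directly from the convexity of the admissible set $\{v\ge 0\}$, so the core Caccioppoli computation is essentially identical in both proofs. What you gain by penalising is a smooth equation and classical derivatives throughout; what the paper gains is economy---no approximation, no convergence $u_\varepsilon\to u$ to justify, no passage to the limit---at the price of having to phrase everything via difference quotients on the nonsmooth solution itself.
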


\begin{proof}
The proof is based on a difference quotient method. 
Let $ \{e_1,e_2, ..., e_n\}$ be the standard basis in $\mathbb{R}^n $.
For a fixed $i \in \{1,2,...,n\}$
denote 
\begin{equation}
u_{i, h}(x):= u(x+ h e_i),~ \textrm{ for } 
 x \in B_{1-h}.
 \end{equation}

\par
Take a nonnegative function $ \zeta \in C_0^\infty (B_{\frac{3}{4}}) $, 
such that $ \zeta \equiv 1$ in $ B_{\frac{1}{2}}$.
Then for small values of the parameter $t>0$, the function 
$ u + t \zeta^2 (u_{i, h}-u)$ is admissible for the biharmonic obstacle problem
in $B_1$. Indeed, $ u + t \zeta^2 (u_{i, h}-u) = u(1-t\zeta ^2)+t \zeta^2 u_{i,h} \geq 0$
if $t>0$ is small, and obviously it satisfies the same boundary
conditions as the minimiser $u$. Hence 
\begin{equation}\label{t1}
 \int_{B_1}\left(\Delta (u + t \zeta ^2(u_{i, h}-u))\right)^2 \geq \int_{B_1}(\Delta u)^2.
\end{equation}
Assuming that $h< \frac{1}{4}$, the inequality will still hold if we 
replace the integration over the ball $B_1$ by $B_{1-h}$, since $\zeta$
is zero outside the ball $B_{\frac{3}{4}}$.

\par
It is clear that $u_{i, h}$ is the solution to the biharmonic obstacle problem
in $B_{1-h}$, and $ u_{i, h} + t \zeta^2 (u-u_{i, h})$ is an admissible function.
Hence
\begin{equation} \label{t2}
 \int_{B_{1-h}} \left(\Delta (u_{i, h} + t \zeta^2 (u-u_{i, h}))\right)^2 \geq \int_{B_{1-h}}(\Delta u_{ i, h})^2.
\end{equation}
After dividing both sides of the inequalities \eqref{t1} and \eqref{t2} by $t$, and taking the limit
as $t\rightarrow 0$, we get
\begin{equation} \label{h1}
  \int_{B_{1-h}} \Delta u \Delta \left(\zeta^2(u_{i, h} -u)\right) \geq 0,
\end{equation}
and
\begin{equation} \label{h2}
  \int_{B_{1-h}} \Delta u_{i, h} \Delta \left(\zeta^2(u -u_{i, h})\right) \geq 0.
\end{equation}
We rewrite inequalities \eqref{h1} and \eqref{h2} explicitly, that is
 \begin{equation*}
  \int_{B_{1-h}} \Delta u \left( (u_{i, h}-u)\Delta \zeta^2 + \zeta ^2 \Delta (u_{i, h}-u)+ 
  2 \nabla \zeta^2 \nabla (u_{i, h}-u) \right) \geq 0, \textrm{ and }
\end{equation*}
\begin{equation*}
  \int_{B_{1-h}} \Delta u_{i, h} \left( (u-u_{i, h})\Delta \zeta^2 + \zeta^2 \Delta (u-u_{i, h})+ 
  2 \nabla \zeta^2 \nabla (u-u_{i, h}) \right) \geq 0.
\end{equation*}
After summing the inequalities above, we obtain
\begin{equation*}
  \int_{B_{1-h}} \zeta^2 (\Delta (u_{ i, h}-u))^2 \leq 
\end{equation*}
\begin{equation*}
 \int_{B_{1-h}} (u_{i, h}-u) \Delta \zeta^2 \Delta (u-u_{i, h})+
 4 \int_{B_{1-h}} \nabla \zeta \nabla (u_{i, h}-u) \zeta \Delta (u-u_{i, h}).
\end{equation*}
 Dividing both sides of the last inequality by ${ h}^2$, we get
\begin{equation} \label{eq2}
\begin{aligned}
 \int_{B_{1-h}} \frac{\zeta^2(\Delta u_{i,h}-\Delta u)^2}{ h^2} \leq 
 \int_{B_{1-h}} \frac{(u_{i, h}-u)}{ h^2} \Delta \zeta^2 \Delta (u-u_{i,h}) \\
 + 4 \int_{B_{1-h}} \nabla \zeta \frac{(\nabla u_{i, h}-\nabla u )}{ h}
 \zeta \frac{ ( \Delta u-\Delta u_{i, h} )}{ h} .
 \end{aligned}
\end{equation}
First let us study the first integral on the right side of \eqref{eq2}
\begin{equation} \label{one}
\begin{aligned}
 \left| \int_{B_{1-h}} \frac{(u_{i, h}-u)}{{h}^2} \Delta \zeta^2 \Delta (u-u_{i, h}) \right| \\
=\left| \int_{B_{1-h}} \Delta u \left( \frac{(u_{i, h}-u)}{ h^2} \Delta \zeta^2 
-\frac{(u-u_{i,- h})}{ h^2} \Delta \zeta^2_{i, - h} \right) \right| \\
=\left| { \int_{B_{1-h}} \Delta u \Delta \zeta^2 \left( \frac{u_{i, h}-2u+u_{i,- h}}{ h^2} \right)+
\Delta u \left(\frac{\Delta \zeta^2-\Delta \zeta^2_{i, h}}{ h}\right) 
\left (\frac{u-u_{i,- h}}{ h}\right) } \right| \\
\leq C \Arrowvert \Delta u \rVert_{L^2(B_1)} \Arrowvert u \rVert_{W^{2,2}(B_1)},
\end{aligned}
\end{equation}
where we applied H\"{o}lder's inequality, and used the fact that the $L^2$-norm of the first and second 
order difference quotients of 
a function $ u \in W^{2,2}$ are uniformly bounded by its $ W^{2,2}$-norm.

\par
Next we estimate the absolute value of the second integral in \eqref{eq2}
\begin{equation} \label{two}
\begin{aligned} 
\left| \int_{B_{1-h}} \nabla \zeta \left(\frac{\nabla u_{i, h}-\nabla u}{ h} \right)
\zeta \left (\frac{\Delta u-\Delta u_{i, h}}{ h} \right) \right|  \\
\leq 8 \int_{B_{1-h}} \lvert\nabla \zeta  \rvert^2 \frac{\lvert \nabla u_{i, h}-\nabla u \rvert^2 }{ h^2} 
+ \frac{1}{8} \int_{B_{1-h}} \zeta^2 \frac{(\Delta (u_{i, h}-u))^2}{ h^2} ,
\end{aligned}
\end{equation}
where we applied Cauchy's inequality.

\par
Combining inequalities \eqref{eq2}, \eqref{one} and \eqref{two}, we obtain
\begin{equation*} 
\int_{B_{1-h}} \zeta^2 \frac{(\Delta (u_{i,h}-u))^2}{ h^2} \leq C \Arrowvert u \rVert^2_{W^{2,2}(B_1)}.
\end{equation*}

\par
According to our choice of function $\zeta$,
\begin{equation*}
  \int_{B_{\frac{1}{2}}} \frac{(\Delta(u_{i, h}-u))^2}{h^2}
  \leq  \int_{B_{1-h}} \zeta^2 \frac{(\Delta (u_{i, h}-u))^2}{h^2},
\end{equation*}
so the $L^2$-norm of the difference quotients of $\Delta u$ is uniformly 
bounded in $B_{\frac{1}{2}}$ hence 
$\Delta u \in W^{1,2}(B_{\frac{1}{2}}) $, and
\begin{equation*}
 \Arrowvert \Delta u \rVert_{W^{1,2}(B_{\frac{1}{2}})}
\leq C \Arrowvert u \rVert_{W^{2,2}(B_1)},
\end{equation*}
where the constant $C$ depends only on the function $ \zeta$, and can be computed
explicitly, depending only on the space dimension.
 
\end{proof}

\begin{corollary} \label{cor23}
 Assume that $\Omega$ is a bounded  open set in $ \mathbb{R}^n$. Then the solution 
 to the obstacle problem is in $W^{3,2}(K)$ for any $K \subset \subset \Omega$, and
\begin{equation}\label{w32K}
 \Arrowvert u \rVert_{W^{3,2}(K)}
\leq C \Arrowvert u \rVert_{W^{2,2}(\Omega)},
\end{equation}
where the constant $C$ depends on the space dimension $n $ and on  $ \dist(K, \partial \Omega)$.
\end{corollary}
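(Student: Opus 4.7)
The plan is a two-step bootstrap: first extend the local $W^{1,2}$-estimate for $\Delta u$ from Proposition \ref{p1} to an arbitrary compactly contained $K \subset\subset \Omega$ via a scaling and covering argument, and then upgrade this to a $W^{3,2}$-estimate for $u$ itself by classical interior elliptic regularity for the Laplacian.

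First I would observe that the proof of Proposition \ref{p1} is purely local and translation/scale covariant. If $B_{2r}(x_0) \subset \Omega$, then the rescaled function $v(y) := r^{-3}u(x_0+ry)$ satisfies the same obstacle problem on $B_1$: the constraints $v \geq 0$, $\Delta^2 v \geq 0$ and $v \cdot \Delta^2 v = 0$ are scale invariant, and $v$ inherits the local variational inequality because the test functions $v + t\zeta^2(v_{i,h}-v)$ used in the proof of Proposition \ref{p1} are compactly supported and hence only probe the behaviour of $u$ inside $B_{2r}(x_0)$. Applying Proposition \ref{p1} to $v$ and changing variables back yields
\begin{equation*}
\|\Delta u\|_{W^{1,2}(B_{r/2}(x_0))} \leq C(n,r)\,\|u\|_{W^{2,2}(B_{2r}(x_0))}.
\end{equation*}
Setting $r := \dist(K,\partial\Omega)/4$ and covering $K$ by finitely many balls $B_{r/2}(x_i)$, $x_i \in K$, with bounded overlap, I would sum the squared versions of this inequality to obtain
\begin{equation*}
\|\Delta u\|_{W^{1,2}(K)} \leq C\,\|u\|_{W^{2,2}(\Omega)},
\end{equation*}
where $C$ depends only on $n$ and $\dist(K,\partial\Omega)$.

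For the second step, I would choose an intermediate open set $K \subset\subset K' \subset\subset \Omega$ and apply the first step with $K'$ in place of $K$. For each index $i \in \{1,\dots,n\}$ the function $\partial_i u \in W^{1,2}(K')$ satisfies $\Delta(\partial_i u) = \partial_i (\Delta u) \in L^2(K')$ in the distributional sense. By the standard interior $W^{2,2}$-regularity result for the Poisson equation (e.g.\ \cite{evans}, Section 6.3), this implies $\partial_i u \in W^{2,2}(K)$ with
\begin{equation*}
\|\partial_i u\|_{W^{2,2}(K)} \leq C\bigl(\|\partial_i u\|_{L^2(K')} + \|\Delta(\partial_i u)\|_{L^2(K')}\bigr) \leq C\,\|u\|_{W^{2,2}(\Omega)}.
\end{equation*}
Summing over $i$ produces the estimate \eqref{w32K}.

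The second step is completely standard and does not use the obstacle condition, so the real work is in the first step. The only delicate point there is verifying that restricting the problem to an interior ball preserves the structure exploited in Proposition \ref{p1}; this is immediate because that proof only used perturbations of $u$ by compactly supported nonnegative multiples, which remain admissible in any subdomain. No genuine obstacle is expected.
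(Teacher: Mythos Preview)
Your proposal is correct and follows essentially the same two-step structure as the paper: a covering argument extends Proposition~\ref{p1} to give $\Delta u \in W^{1,2}_{\mathrm{loc}}$, and then interior elliptic regularity upgrades this to $u \in W^{3,2}_{\mathrm{loc}}$. The only cosmetic difference is that the paper phrases the second step as a direct application of the Calder\'on--Zygmund inequality (\cite{GT}, Theorem~9.11) to $u$, whereas you apply interior $W^{2,2}$-regularity for the Poisson equation to each $\partial_i u$; these are equivalent formulations of the same standard fact.
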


\begin{proof}
It follows from Proposition \ref{p1} by a standard covering argument that
\begin{equation*}
 \Arrowvert \Delta u \rVert_{W^{1,2}(\Omega')} \leq C_{\Omega'} \Arrowvert u \rVert_{W^{2,2}(\Omega)},
\end{equation*}
for any $ \Omega' \subset \subset  \Omega$.

\par
Let $ K \subset \subset  \Omega' \subset \subset \Omega $,
 according to the Calder\'{o}n-Zygmund inequality (\cite{GT}, Theorem 9.11),
 \begin{equation*}
  \Arrowvert D^3 u\rVert_{L^{2}(K)} \leq C_K \left( \Arrowvert \Delta u\rVert_{W^{1,2}(\Omega')}+
 \Arrowvert u \rVert_{W^{2,2}(\Omega')}\right).
 \end{equation*}
 Then it follows that $u$ is in $W^{3,2}$ locally, with the estimate \eqref{w32K}.
\end{proof}

\begin{lemma} \label{zeta}
Let $ u $ be the solution to the biharmonic obstacle problem in $\Omega$. Take 
 $  K \subset \subset \Omega$, and a function $ \zeta \in C^\infty_0(K)$, $ \zeta \geq 0$, 
 then
 \begin{equation}
  \int_\Omega \Delta u_{x_i} \Delta ( \zeta u_{x_i}) \leq 0,
 \end{equation}
for all $ i=1,2,..., n$.
\end{lemma}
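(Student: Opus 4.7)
The approach mirrors the difference quotient argument already used in the proof of Proposition \ref{p1}. Fix $i \in \{1,\dots,n\}$ and choose $h>0$ small enough that $\operatorname{supp}\zeta \subset\subset \Omega\cap(\Omega-he_i)$, and set $u_{i,h}(x):=u(x+he_i)$, which is the solution to the biharmonic obstacle problem on $\Omega-he_i$. The idea is to use the same perturbation $u\mapsto u+t\zeta(u_{i,h}-u)$ that worked in Proposition \ref{p1} (with $\zeta^2$ replaced by $\zeta$), obtain two variational inequalities, add them to cancel the inhomogeneity, divide by $h^2$, and pass to the limit $h\to 0$.

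First I would check admissibility: writing $u+t\zeta(u_{i,h}-u)=(1-t\zeta)u+t\zeta u_{i,h}$ shows the perturbation is non-negative for $0<t\leq \lVert\zeta\rVert_\infty^{-1}$, and the compact support of $\zeta$ in $K\subset\subset\Omega$ ensures that the boundary conditions defining $\mathscr{A}$ are preserved. The same works with the roles of $u$ and $u_{i,h}$ swapped. Using minimality for both $u$ on $\Omega$ and $u_{i,h}$ on $\Omega-he_i$, then dividing by $t$ and letting $t\to 0^+$, one obtains the two inequalities
\begin{equation*}
\int_\Omega \Delta u\,\Delta\bigl(\zeta(u_{i,h}-u)\bigr)\geq 0,\qquad \int_\Omega \Delta u_{i,h}\,\Delta\bigl(\zeta(u-u_{i,h})\bigr)\geq 0,
\end{equation*}
where both integrals reduce to $\operatorname{supp}\zeta$ so the domain of integration is harmless. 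Adding the two inequalities and simplifying gives
\begin{equation*}
\int_\Omega \Delta(u_{i,h}-u)\,\Delta\bigl(\zeta(u_{i,h}-u)\bigr)\leq 0.
\end{equation*}

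Dividing by $h^2$, the inequality becomes
\begin{equation*}
\int_\Omega \Delta\!\left(\tfrac{u_{i,h}-u}{h}\right)\Delta\!\left(\zeta\,\tfrac{u_{i,h}-u}{h}\right)\leq 0.
\end{equation*}
The passage to the limit is the only nontrivial step. By Corollary \ref{cor23}, $u\in W^{3,2}_{\mathrm{loc}}(\Omega)$, so $u_{x_i}\in W^{2,2}_{\mathrm{loc}}(\Omega)$ and the difference quotients $\tfrac{u_{i,h}-u}{h}$ converge to $u_{x_i}$ strongly in $W^{2,2}$ on any compact subset containing $\operatorname{supp}\zeta$. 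This strong $W^{2,2}$-convergence is exactly what is needed to pass both $\Delta\!\left(\tfrac{u_{i,h}-u}{h}\right)$ and $\Delta\!\left(\zeta\,\tfrac{u_{i,h}-u}{h}\right)$ to their limits in $L^2$, yielding $\int_\Omega \Delta u_{x_i}\,\Delta(\zeta u_{x_i})\leq 0$.

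The main obstacle is justifying the limit cleanly; but since $W^{3,2}_{\mathrm{loc}}$-regularity is already in hand from Corollary \ref{cor23}, the limit is a routine application of the standard difference-quotient convergence lemma in Sobolev spaces. The rest of the argument is the familiar competitor construction plus the addition trick that cancels first-order information so that only the second-order quantity $\Delta(u_{i,h}-u)^2$-type term survives.
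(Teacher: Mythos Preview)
Your proposal is correct and follows essentially the same argument as the paper: construct the convex-combination competitors $u+t\zeta(u_{i,h}-u)$ and $u_{i,h}+t\zeta(u-u_{i,h})$, derive the two first-variation inequalities, add them to obtain $\int \Delta(u_{i,h}-u)\,\Delta(\zeta(u_{i,h}-u))\le 0$, then divide by $h^2$ and pass to the limit using the $W^{3,2}_{\mathrm{loc}}$-regularity from Corollary~\ref{cor23}. Your identification of $\Omega-he_i$ as the domain on which $u_{i,h}$ is the minimizer is in fact slightly more precise than the paper's phrasing.
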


\begin{proof}
 Fix $1 \leq  i \leq n$, denote $ u_{i,h}(x):= u(x+he_i)$, where $0 < \lvert h \rvert < \dist(K,\partial \Omega)$,
 hence $u_{i,h} $ is defined in $K$. Let us observe that 
 the function $ u+ t \zeta (u_{i,h}- u )$ is well defined and  nonnegative in $ \Omega$
 for any $ 0<t< \frac{1}{ \lVert \zeta \rVert_{L^\infty}}$, and it satisfies the same boundary conditions as $u$.
 Therefore
 \begin{equation*}
  \int_\Omega \left(  \Delta (u+ t \zeta(u_{i,h}- u  )) \right)^2 \geq \int_\Omega (\Delta u )^2,
 \end{equation*}
after dividing the last inequality by $ t $, and taking the limit as $ t \rightarrow 0 $, we obtain
\begin{equation} \label{ih1}
 \int_K \Delta u \Delta (\zeta ( u_{i,h}-u )) \geq 0.
\end{equation}
Note that $ u_{i,h}$ is the solution to the biharmonic obstacle problem in $ K$, and 
$ u_{i,h}+ t \zeta (u-u_{i,h})$ is an admissible function, hence 
 \begin{equation*}
  \int_K  \left(  \Delta (u_{i,h}+ t \zeta(u-u_{i,h}  )) \right)^2 \geq \int_K (\Delta u_{i,h })^2,
 \end{equation*}
 after dividing the last inequality by $ t $, and taking the limit as $ t \rightarrow 0 $, we obtain
\begin{equation} \label{ih2}
 \int_K \Delta u_{i,h} \Delta (\zeta ( u- u_{i,h} )) \geq 0.
\end{equation}

\par
Inequalities \eqref{ih1} and \eqref{ih2} imply that 
\begin{equation} \label{xi}
\int_K  (\Delta u_{i, h}- \Delta u ) \Delta ( \zeta ( u_{i,h}-u)) \leq 0, 
\end{equation}
dividing the last inequality by $ h^2$, and taking into account that $ u\in W^{3,2}_{loc}$, we may pass to 
the limit as $ \lvert h \rvert \rightarrow 0$ in \eqref{xi}, and conclude that
\begin{equation*}
 \int_K \Delta u_{x_i} \Delta(\zeta u_{x_i}) \leq 0.
\end{equation*}

\end{proof}

\subsection{$ C^{1,\alpha}$-regularity of the solution}

It has been shown in \cite{CF}, Theorem 3.1 that $ \Delta u \in L^\infty_{loc}$ for 
the solution to the biharmonic obstacle problem with nonzero obstacle and zero boundary conditions.
In this section we show that the statement remains true in our setting, with a quantitative estimate of
$ \Arrowvert \Delta u \rVert_{L^\infty}$.

\begin{lemma} \label{l2}
 The solution to the biharmonic obstacle problem satisfies the following equation in the 
 distribution sense
 \begin{equation} \label{mu}
 \Delta^2u=\mu_u,
 \end{equation}
 where $\mu_u$ is a positive measure on $\Omega$.
\end{lemma}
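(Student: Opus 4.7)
The plan is to show that $\Delta^2 u$ is a nonnegative distribution, and then invoke the classical fact that every nonnegative distribution is represented by a positive Radon measure.

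First I would pick an arbitrary test function $\varphi \in C_0^\infty(\Omega)$ with $\varphi \geq 0$. Since $u \geq 0$ and $\varphi$ has compact support inside $\Omega$, the perturbation $u + t\varphi$ lies in $\mathscr{A}$ for every $t > 0$: it is nonnegative, and it matches the boundary data $g, f$ of $u$ because $\varphi$ vanishes to second order near $\partial\Omega$. Minimality of $u$ gives
\begin{equation*}
J[u + t\varphi] \geq J[u], \qquad t > 0,
\end{equation*}
and expanding $(\Delta u + t\Delta\varphi)^2$ yields
\begin{equation*}
2t \int_\Omega \Delta u \, \Delta\varphi + t^2 \int_\Omega (\Delta\varphi)^2 \geq 0.
\end{equation*}
Dividing by $t$ and letting $t \to 0^+$ produces the one-sided first variation
\begin{equation*}
\int_\Omega \Delta u \, \Delta\varphi \geq 0 \quad \text{for all } \varphi \in C_0^\infty(\Omega), \varphi \geq 0.
\end{equation*}

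Next I would integrate by parts. Since $u \in W^{2,2}(\Omega)$ and $\varphi \in C_0^\infty(\Omega)$, we may freely transfer the Laplacians: $\int_\Omega \Delta u \, \Delta \varphi = \int_\Omega u \, \Delta^2\varphi$ (no boundary terms appear because $\varphi$ and $\nabla\varphi$ vanish near $\partial\Omega$). Therefore
\begin{equation*}
\langle \Delta^2 u, \varphi \rangle := \int_\Omega u \, \Delta^2 \varphi \geq 0
\end{equation*}
for every nonnegative $\varphi \in C_0^\infty(\Omega)$, i.e.\ $\Delta^2 u \geq 0$ in the sense of distributions.

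Finally, I would apply the standard representation theorem (a positive distribution on an open set is a positive Radon measure; see for instance Schwartz's theorem in Hörmander, \emph{Distribution Theory}, Thm.\ 2.1.7) to conclude that there exists a positive Radon measure $\mu_u$ on $\Omega$ with $\Delta^2 u = \mu_u$ as distributions. I do not anticipate any serious obstacle here: the only subtlety is checking that $u + t\varphi$ is indeed admissible (which is immediate from $\varphi \geq 0$ and $\operatorname{supp}\varphi \subset\subset \Omega$) and that one may freely integrate by parts, which is justified by $u \in W^{2,2}$ together with the compact support of $\varphi$.
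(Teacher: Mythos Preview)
Your argument is correct and matches the paper's proof almost verbatim: both use the competitor $u+t\varphi$ with $\varphi\in C_0^\infty(\Omega)$, $\varphi\ge 0$, divide by $t$ and let $t\to 0^+$ to get $\int_\Omega \Delta u\,\Delta\varphi\ge 0$, and then invoke the Riesz/Schwartz representation of positive distributions. The only cosmetic difference is that the paper stops at the pairing $\Lambda(\varphi)=\int_\Omega \Delta u\,\Delta\varphi$ (which already defines $\Delta^2 u$ distributionally since $\Delta u\in L^2$), whereas you integrate by parts once more to $\int_\Omega u\,\Delta^2\varphi$; this extra step is harmless but unnecessary.
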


\begin{proof}
For any nonnegative test function $ \eta \in C_0^{\infty}(\Omega)$, the function
$u+\varepsilon \eta$ is obviously admissible for any $\varepsilon >0$. Hence  
$J[u+\varepsilon \eta]\geq J[u]$, consequently
\begin{equation*}
\int{\varepsilon^2(\Delta \eta)^2+2\varepsilon \Delta u \Delta \eta} \geq 0,
\end{equation*}
and after dividing by $\varepsilon$ and letting $\varepsilon$
go to zero, we obtain
\begin{equation*}
\int{\Delta u \Delta \eta} \geq 0,
\end{equation*}
for all $ \eta \in C_0^{\infty}(\Omega)$, $\eta \geq 0$, so
 $ \Delta^2 u \geq 0$ in the sense of distributions.

\par
Let us consider the following linear functional defined on the space $ C_0^{\infty}(\Omega)$,
\begin{equation*}
\Lambda (\eta)=\int_\Omega \Delta u \Delta \eta.
\end{equation*}

\par
Then $\Lambda $ is a continuous linear functional on $C_0^\infty(\Omega)$, hence it is a distribution. 
According to the Riesz theorem, a positive 
distribution is a positive measure, let us denote this measure by  $\mu:= \mu_u$. 
Then $ \Delta^2 u = \mu_u $ in the sense that
\begin{equation*}
\int_\Omega \Delta u \Delta \eta = \int_\Omega \eta d\mu_u.
\end{equation*}
for every $ \eta \in C_0^{\infty}(\Omega)$.

\end{proof}

\begin{corollary} \label{cor26}
 There exists an upper semicontinuous function $\omega$ in $\Omega$, such that
$ \omega = \Delta u \textrm{ a.e. in } \Omega$.
\end{corollary}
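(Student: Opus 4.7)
The plan is to exploit the fact that Lemma \ref{l2} identifies $\Delta u$ as a subharmonic distribution: since $\Delta(\Delta u) = \mu_u$ is a nonnegative Radon measure on $\Omega$, and by Proposition \ref{p1} we know $\Delta u \in W^{1,2}_{loc}(\Omega) \subset L^1_{loc}(\Omega)$, the function $\Delta u$ is (a representative of) an $L^1_{loc}$ subharmonic distribution. For such objects there is a standard canonical upper semicontinuous representative, and I would simply construct it explicitly.

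First, I would recall the mean-value characterization for subharmonic distributions: if $v \in L^1_{loc}(\Omega)$ satisfies $\Delta v \geq 0$ in $\mathcal{D}'(\Omega)$, then for every $x \in \Omega$ the spherical/ball averages
\begin{equation*}
M_r(x) := \fint_{B_r(x)} v(y)\, dy
\end{equation*}
are a non-decreasing function of $r$ on the interval $(0, \dist(x,\partial\Omega))$. This monotonicity follows from a routine mollification argument: convolving $v$ with a standard radial mollifier $\rho_\varepsilon$ gives smooth functions $v_\varepsilon$ with $\Delta v_\varepsilon \geq 0$, for which monotonicity of ball averages is the classical sub-mean value inequality, and then one passes to the limit $\varepsilon \to 0$ using $v_\varepsilon \to v$ in $L^1_{loc}$.

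Next, I would define
\begin{equation*}
\omega(x) := \lim_{r \to 0^+} M_r(x),
\end{equation*}
which exists in $[-\infty, +\infty)$ by monotonicity. By the Lebesgue differentiation theorem applied to $\Delta u \in L^1_{loc}$, this limit equals $\Delta u(x)$ at every Lebesgue point, hence $\omega = \Delta u$ a.e. in $\Omega$. Upper semicontinuity of $\omega$ is then essentially automatic: for every $r > 0$ the map $x \mapsto M_r(x)$ is continuous (as the convolution of an $L^1_{loc}$ function with the normalized characteristic function of $B_r$), and $\omega$ is the infimum over $r$ of these continuous functions, hence upper semicontinuous.

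The only step requiring genuine care is the monotonicity of $r \mapsto M_r(x)$ for a distributional subharmonic function, since we do not yet know that $\Delta u$ is continuous or even defined pointwise. I would handle this by the mollification argument sketched above, which is standard but must be carried out in $L^1_{loc}$ rather than pointwise; everything else is a direct application of Lebesgue differentiation and of the fact that a decreasing pointwise limit of continuous functions is upper semicontinuous.
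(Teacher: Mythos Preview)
Your proposal is correct and follows essentially the same route as the paper: define $\omega(x)$ as the limit of ball averages of $\Delta u$, use subharmonicity (Lemma~\ref{l2}) to get monotonicity in $r$, invoke Lebesgue differentiation for the a.e.\ identification, and obtain upper semicontinuity because $\omega$ is an infimum of the continuous functions $x\mapsto M_r(x)$. You supply more justification than the paper (the mollification argument for monotonicity and the explicit continuity of $M_r$), and you state the monotonicity direction correctly---the paper's ``decreasing in $r$'' is a slip; the averages are non-decreasing, which is precisely what makes $\omega=\inf_r M_r$ upper semicontinuous.
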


\begin{proof}
 For any fixed $x_0 \in \Omega $, the function 
\begin{align*}
\omega_r(x_0):= \fint_{B_r(x_0)} \Delta u(x) dx
\end{align*}
is decreasing in $r>0$, since $ \Delta u$ is subharmonic by Lemma \ref{l2}. Define $ \omega(x):= \lim_{r\rightarrow 0} \omega_r(x) $, 
then $\omega $ is an upper semicontinuous function. On the other hand 
$ \omega_r(x) \rightarrow \Delta u (x) $ as $r \rightarrow 0$  a.e., hence $ \omega = \Delta u $ a.e. 
in  $\Omega$.
\end{proof}

\par
 The next lemma is a restatement of the
corresponding result in \cite{CF}, Theorem 2.2.

\begin{lemma}\label{wbdd}
Let $ \Omega \subset \mathbb{R}^n $ be a bounded open set with a smooth boundary, and let $u $ be a solution to the biharmonic 
obstacle problem with zero obstacle.
Denote by $ S$ the support of the measure $\mu_u = \Delta ^2 u$ in $\Omega$, then 
 \begin{equation} \label{supp}
  \omega(x_0) \geq 0,~~ \textrm{ for every }x_0 \in S  .
 \end{equation}

\end{lemma}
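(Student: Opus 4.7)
The plan is to argue by contradiction, exploiting the fact that $S$ lies in the coincidence set $\{u = 0\}$, so that every $x_0 \in S$ is a global minimum of the non-negative function $u$; at such a minimum one expects $\Delta u \geq 0$ in an averaged sense.

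I would first establish the complementarity $\mathrm{supp}(\mu_u) \subset \{u = 0\}$. Using that $u$ is continuous (inherited from $u \in W^{3,2}_{\mathrm{loc}}$ via Sobolev embedding in low dimensions, or from the local Lipschitz regularity in \cite{CF, Fr}), the set $\{u > 0\}$ is open. For any $\eta \in C_0^\infty$ supported in an open ball contained in $\{u > 0\}$, the perturbation $u + t\eta$ is admissible for all sufficiently small $|t|$, so minimality of $u$ forces $\int_\Omega \Delta u \, \Delta \eta = 0$. Thus $\mu_u$ vanishes on $\{u > 0\}$, which proves $S \subset \{u = 0\}$; in particular $u(x_0) = 0$ for every $x_0 \in S$.

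Suppose now, for contradiction, that $\omega(x_0) = -2c$ with $c > 0$ at some $x_0 \in S$. By upper semicontinuity of $\omega$ from Corollary \ref{cor26}, there is $r > 0$ with $B_r(x_0) \subset\subset \Omega$ and $\omega \leq -c$ throughout $B_r(x_0)$; since $\omega = \Delta u$ a.e., we have $\Delta u \leq -c$ a.e. in $B_r(x_0)$. Consider the quadratic lift
\begin{equation*}
v(x) := u(x) + \tfrac{c}{2n} |x - x_0|^2,
\end{equation*}
which satisfies $\Delta v \leq 0$ a.e. in $B_r(x_0)$ and hence is superharmonic there. By the previous paragraph $v(x_0) = 0$, while $v(x) \geq \tfrac{c}{2n} |x - x_0|^2 > 0$ for every other $x \in B_r(x_0)$. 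The sub-mean-value inequality for superharmonic functions then forces $0 = v(x_0) \geq \fint_{\partial B_\rho(x_0)} v \, dS > 0$ for any $0 < \rho < r$, a contradiction.

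The main delicate point is the complementarity step, where one needs enough pointwise regularity of $u$ to single out $\{u > 0\}$ as an open set on which two-sided admissible variations are available. Once this reduction is in place, the superharmonic comparison is essentially routine, and the minimum principle delivers $\omega(x_0) \geq 0$.
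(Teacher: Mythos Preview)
Your argument is correct once continuity of $u$ is in hand, and it is considerably more direct than the paper's; but the route is genuinely different and the regularity input deserves more care than you give it.

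The paper never uses pointwise continuity of $u$. It works with mollifications $u_\varepsilon$: if $u_\varepsilon \geq \alpha > 0$ in some $B_r(x_0)$ for all small $\varepsilon$, one can build admissible two-sided competitors $\eta u_\varepsilon + (1-\eta)u \pm t\zeta$ and conclude $\Delta^2 u = 0$ in $B_{r/2}(x_0)$; contrapositively, $x_0 \in S$ forces $u_{\varepsilon_m}(x_m) \to 0$ along some $x_m \to x_0$, $\varepsilon_m \to 0$. Green's representation in $B_\rho(x_m)$,
\[
u_{\varepsilon_m}(x_m) = \fint_{\partial B_\rho(x_m)} u_{\varepsilon_m}\, d\mathcal{H}^{n-1} - \int_{B_\rho(x_m)} \Delta u_{\varepsilon_m}(y)\, V(x_m - y)\, dy,
\]
together with $u_{\varepsilon_m} \geq 0$ and upper semicontinuity of $\omega$ then yields $\omega(x_0) \geq 0$.

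Your quadratic lift $v = u + \tfrac{c}{2n}|x - x_0|^2$ and the minimum principle are more transparent. But note that continuity enters \emph{twice}, not just in the complementarity step you flag: you also need the pointwise value $u(x_0)=0$ to get $v(x_0)=0$; without it the mean-value inequality for the superharmonic $v$ only gives $\underline{v}(x_0)\geq 0$, which is no contradiction. Within the paper's logical flow this is a genuine issue: continuity of $u$ (Corollary~\ref{c1alpha}) is deduced from $\Delta u \in L^\infty_{\mathrm{loc}}$ (Theorem~\ref{l3}), which in turn relies on the present lemma, so invoking it here is circular. Your Sobolev remark covers $n\leq 5$ cleanly via $W^{3,2}_{\mathrm{loc}}\hookrightarrow C^0$; for $n\geq 6$ you appeal to \cite{CF,Fr}, but those treat a different setting (nonzero obstacle, zero boundary data), and the whole point of Section~2 is to re-derive local regularity for the current one. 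The paper's mollifier route is precisely what sidesteps this dependence.
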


\begin{proof}
The detailed proof of Lemma \ref{wbdd} can be found in the original paper \cite{CF} and in the
book \cite{Fri82}(pp. 92-94), so we will provide only a sketch, 
showing the main ideas.

\par
Extend $u $ to a function in $ W^{2,2}_{loc}(\mathbb{R}^n)$, and denote by  $ u_\varepsilon$ the 
$\varepsilon$-mollifier of $u$. 
Let $x_0 \in \Omega $, assume that  there exists a ball $ B_r(x_0) $, such that 
$u_\varepsilon \geq \alpha >0$ in $ B_r(x_0) $.
Let $ \eta \in C_0^{\infty}(B_r(x_0))$, $ \eta \geq 0$ and $ \eta = 1 $ in $ B_{r/2}(x_0)$. Then 
for any $ \zeta \in C_0^\infty ( B_{r/2}(x_0))$ and $0 < t < \frac{\alpha}{2 \lVert\zeta \rVert_\infty} $
the function
\begin{equation*}
 v= \eta u_\varepsilon+(1-\eta)u \pm t \zeta
\end{equation*}
is nonnegative and it 
satisfies the same boundary conditions as $u$. Hence
\begin{align*}
\int (\Delta u)^2 \leq \int \left( \Delta( \eta u_\varepsilon+(1-\eta)u \pm t \zeta ) \right) ^2,
\end{align*}
after passing to the limit in the last inequality as $ \varepsilon \rightarrow 0$, we obtain
\begin{align*}
\int (\Delta u)^2 \leq \int  ( \Delta u \pm t \Delta \zeta ) ^2,
\end{align*}
Therefore
\begin{equation*}
 \int {\Delta u \Delta \zeta }=0 ,
\end{equation*}
 for all $ \zeta \in C_0^{\infty}(B_{r/2}(x_0))$, hence $\Delta^2 u=0$ in $B_{r/2}(x_0)$ and $ x_0 \notin S $.
It follows that if $ x_0 \in S$, then
there exists $ x_m \in \Omega $, $ x_m \rightarrow x_0$, and $\varepsilon_m \rightarrow 0$,
such that 
\begin{equation*}
 u_{\varepsilon_m}(x_m)  \rightarrow 0, \textrm{ as }  m\rightarrow \infty.
\end{equation*}
 Then by Green's formula,
\begin{align*}
 u_{\varepsilon_m}(x_m)= \fint_{\partial B_\rho(x_m)} u_{\varepsilon_m} d \mathcal{H}^{n-1}
 -\int_{B_\rho(x_m)} \Delta u_{\varepsilon_m}(y) V(x_m-y)dy,
\end{align*}
where $ \rho <  \dist (x_0, \partial \Omega)$ and $- V(z) $ is Green's function for Laplacian in the ball
$ B_\rho(0)$. Hence
\begin{align*}
 \liminf_{m\rightarrow\infty} \int_{B_\rho(x_m)} \Delta u_{\varepsilon_m}(y) V(x_m-y)dy \geq 0,
\end{align*}
 Then 
it follows from the convergence of the mollifiers and the upper semicontinuity of
$\omega$, that $\omega(x_0)\geq 0$, for any $ x_0\in S$.

\end{proof}

\par
Knowing that $ \Delta u $ is a subharmonic function, and $ \omega \geq 0$ on the support of $ \Delta^2 u$,
 we can 
show that $ \Delta u $ is locally bounded (Theorem 3.1 in \cite{CF}).

\begin{theorem} \label{l3}
 Let $ u $ be the solution to the biharmonic obstacle problem with zero obstacle in
 $ \Omega $, $ B_1 \subset \subset \Omega$. Then 
 \begin{equation} \label{inf}
  \Arrowvert \Delta u \rVert_{L^\infty(B_{1/3})} \leq C  \Arrowvert u \rVert_{W^{2,2}(\Omega)},
  \end{equation}
 where the constant $C>0$ depends on the space dimension $n $ and on $\dist(B_1, \partial\Omega)$.
\end{theorem}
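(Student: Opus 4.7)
My strategy is to combine two facts about $\omega$ that are already at our disposal: $\omega$ is an upper-semicontinuous subharmonic representative of $\Delta u$ (by Lemma~\ref{l2} and Corollary~\ref{cor26}), and $\omega\geq 0$ on $S:=\operatorname{supp}\mu_u$ (Lemma~\ref{wbdd}). Subharmonicity of $\omega$ will give the $L^\infty$-bound from above, while its non-negativity on the support of its own Laplacian will give the bound from below, obtained by working with the truncation $\omega^-:=\max(-\omega,0)$.

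For the upper bound, fix $x_0\in B_{1/3}$; since $B_{1/3}(x_0)\subset B_{2/3}\subset\subset\Omega$, applying the sub-mean-value inequality for the subharmonic function $\omega$ followed by Cauchy--Schwarz gives
$$\omega(x_0)\ \leq\ \fint_{B_{1/3}(x_0)}\omega\ \leq\ C_n\|\Delta u\|_{L^2(B_{2/3})}\ \leq\ C\|u\|_{W^{2,2}(\Omega)}.$$
For the lower bound, the plan is to show that $\omega^-\geq 0$ is (distributionally) subharmonic in $B_1$. Once that is in hand, the same sub-mean-value argument applied to $\omega^-$ yields $\omega^-(x_0)\leq C\|u\|_{W^{2,2}(\Omega)}$, hence $\omega(x_0)\geq -\omega^-(x_0)\geq -C\|u\|_{W^{2,2}(\Omega)}$, and combining with the display above gives~\eqref{inf}.

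The subharmonicity of $\omega^-$ is precisely where the obstacle information enters. Since $\omega\geq 0$ on $S$, the open set $\{\omega<0\}$ is disjoint from $S=\operatorname{supp}\mu_u$, so $\mu_u\equiv 0$ on $\{\omega<0\}$ and $\omega$ is in fact harmonic there. Kato's inequality applied to $-\omega$ (whose distributional Laplacian $-\mu_u$ is a signed Radon measure vanishing on $\{\omega<0\}$) then formally gives
$$\Delta\omega^-=\Delta(-\omega)^+\ \geq\ \chi_{\{\omega<0\}}\,\Delta(-\omega)\ =\ -\chi_{\{\omega<0\}}\,\mu_u\ =\ 0$$
in $\mathcal{D}'(B_1)$.

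The main technical obstacle is a clean justification of this inequality, since $\omega$ is only upper semicontinuous and $\partial\{\omega<0\}$ can be irregular. The smoothest tool is the form of Kato's inequality valid for $W^{1,2}_{\mathrm{loc}}$ functions whose distributional Laplacian is a Radon measure. A more hands-on alternative is to test against $\varphi\in C_c^\infty(B_1)$ with $\varphi\geq 0$, use Stampacchia's chain rule to write $\int\omega^-\Delta\varphi=\int_{\{\omega<0\}}\nabla\omega\cdot\nabla\varphi$, and then integrate by parts on $\{\omega<0\}$, exploiting that $\omega$ is smooth and harmonic inside, vanishes on the boundary in the USC sense, and has a non-negative outward normal derivative there. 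A minor additional point is that $\omega^-$ is lower (not upper) semicontinuous, so the pointwise estimate is first obtained for the USC regularization of $\omega^-$, which dominates $\omega^-$ pointwise and then transfers the bound back.
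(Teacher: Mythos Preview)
Your approach is correct and genuinely different from the paper's.  The paper (following Caffarelli--Friedman) obtains the lower bound by writing $\omega$ via a Green's representation
\[
\omega(x)= -\int_{B_{1/2}} V(x-y)\,d\mu_u(y) + \text{(controlled terms)},
\]
and then invoking the \emph{Evans maximum principle} for the superharmonic potential $\tilde V(x)=\int V(x-y)\,d\mu_u$: since $\omega\ge 0$ on $S$, one has $\tilde V\le \delta$ on $S=\operatorname{supp}\Delta\tilde V$, hence $\tilde V$ is bounded everywhere, which yields $\omega\ge -C$.  Your route bypasses both the Green kernel and Evans' principle by proving directly that $\omega^-$ is subharmonic; the lower bound then drops out of the same mean--value estimate used for the upper bound.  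This is more self--contained and arguably cleaner, at the price of the Kato/level-set justification you flag.

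Two remarks on that justification.  First, the off-the-shelf Kato inequality for measure Laplacians (Brezis--Ponce) typically gives $\Delta v^+\ge \chi_{\{v\ge 0\}}\Delta v$ with the \emph{closed} superlevel set, and $\mu_u$ may well charge $\{\omega=0\}$; so the ``formal'' display does not follow from that statement alone.  Your hands-on alternative, however, goes through cleanly: since $\omega\in W^{1,2}_{\mathrm{loc}}$ (Proposition~\ref{p1}/Corollary~\ref{cor23}) and $\omega$ is harmonic, hence smooth, on $U:=\{\omega<0\}$, Sard's theorem gives, for a.e.\ $\epsilon>0$, a smooth level set $\{\omega=-\epsilon\}$, and on $U_\epsilon=\{\omega<-\epsilon\}$ one has
\[
\int_{U_\epsilon}\nabla\omega\cdot\nabla\varphi
=\int_{\partial U_\epsilon}\varphi\,|\nabla\omega|\,d\mathcal H^{n-1}\ \ge\ 0,
\]
then let $\epsilon\downarrow 0$ by dominated convergence and combine with Stampacchia's chain rule.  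Second, the USC/LSC wrinkle is harmless here: in fact $\omega>-\infty$ everywhere (any point with $\omega(x_0)=-\infty$ would lie in the open set $\{\omega<0\}$, where $\omega$ is harmonic and finite), and on $\{\omega<0\}$ the subharmonic representative of $\omega^-$ agrees with $-\omega$ pointwise, while on $\{\omega\ge 0\}$ one has $\omega^-(x_0)=0$, so the pointwise bound $\omega^-(x_0)\le C\|\Delta u\|_{L^2}$ holds without passing through a regularization.
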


\begin{proof}
The detailed proof of the theorem can be found in the original paper \cite{CF}, Theorem 3.1, and in the book \cite{Fri82},
pp. 94-97. Here we will only provide a sketch of the proof.

\par
Let $ \omega$ be the upper semicontinuous equivalent of $ \Delta u$ and $ x_0 \in B_{1/2}$, then 
\begin{equation*}
 \omega (x_0) \leq \fint_{B_{1/2}(x_0)} \Delta u(x) dx,
\end{equation*}
 since $ \omega $ is a subharmonic function. Applying H\"{o}lder's inequality, we obtain
 \begin{equation} \label{above}
  \omega(x_0) \leq \lvert B_{1/2}\rvert^{-\frac{1}{2}} \lVert \Delta u \rVert_{L^2(B_1)}.
 \end{equation}

\par
It remains to show that $ \Delta u $ is bounded from below in $B_{1/2}$. 
Let $ \zeta \in C_0^\infty(B_1)$, $ \zeta =1 $ in $ B_{2/3}$ and $ 0 \leq \zeta \leq 1$ elsewhere.
Referring to \cite{Fri82}, p.96, the following formula holds for any $ x \in B_{1/2}$
\begin{equation} \label{cf}
 \omega(x)= - \int_{B_{1/2}} V(x-y) d \mu- 
 \int_{B_1\setminus B_{1/2}} \zeta(y) V(x-y) \Delta^2 u dy + \delta(x),
\end{equation}
where $ V$ is Green's function for the unit ball $B_1$, and $ \delta$ is a bounded function,
\begin{equation} \label{del}
 \Arrowvert \delta \rVert_{L^\infty( B_{1/2})} \leq C_n
\Arrowvert \Delta u \rVert_{L^2(B_1)}.
\end{equation}
Denote
\begin{equation*}
 \tilde{V}(x):= \int_{B_{1/2}} V(x-y) d\mu (y),
\end{equation*}
then $ \tilde{V}$
is a superharmonic function in $\mathbb{R}^n$, and the measure $\upsilon:=\Delta \tilde{V} $ is supported on 
$S_0:= B_{1/2}\cap S$, moreover according to Lemma \ref{wbdd}, \eqref{supp}
\begin{equation*}
 \tilde{V} (x) \leq -\omega(x)+\delta(x)\leq \delta(x) ~\textrm{ on } S_0.
\end{equation*}
Taking into account that $ \tilde{V}(+\infty) <\infty$, the authors in \cite{CF} apply
Evans maximum principle, \cite{L} to the superharmonic function $ \tilde{V}-\tilde{V}(+\infty)$,
and conclude that 
\begin{equation}
 \tilde{V}(x)\leq  \Arrowvert \delta \rVert_{L^\infty( B_{1/2})}~ \textrm{ in }~\mathbb{R}^n.
\end{equation}

\par
It follows from equation \eqref{cf} that 
\begin{equation}\label{below}
 \omega(x) \geq - \Arrowvert \delta \rVert_{L^\infty( B_{1/2})}-c_n \mu_u(B_1)+\delta(x),
\end{equation}
for any $ x\in B_{1/3}$.

\par
Let $\eta \in C_0^\infty(\Omega)$ be a nonnegative function,  such that $ \eta=1$ in $B_1$ and 
$ 0 \leq \eta \leq1$ in $ \Omega$. Then
\begin{equation*}
 \mu_u(B_1)\leq \int_\Omega \eta d\mu_u =\int_\Omega \Delta u \Delta \eta  \leq 
 \Arrowvert \Delta u \rVert_{L^2(\Omega)} \Arrowvert \Delta \eta \rVert_{L^2(\Omega)},
\end{equation*}
and $ \eta $ can be chosen such that $  \Arrowvert \Delta \eta \rVert_{L^2(\Omega)} \leq C(\dist(B_1, \partial \Omega))$.
Hence
\begin{equation} \label{b1}
 \mu_u(B_1)\leq C \Arrowvert \Delta u \rVert_{L^2(\Omega)},
\end{equation}
where the constant $ C>0$ depends on the space dimension and on $ \dist(B_1, \partial \Omega)$.

\par
Combining the inequalities \eqref{above} and \eqref{below} together with \eqref{b1}, \eqref{del},
we obtain \eqref{inf}.
\end{proof}

\begin{corollary} \label{c1alpha}
 Let u be the solution to the biharmonic obstacle problem in $\Omega$. Then  $ u\in C^{1,\alpha}_{loc}$, 
 for any $0<\alpha<1$, and
 \begin{equation} \label{1alpha}
  \Arrowvert u \rVert_{C^{1,\alpha}(K)}\leq C\Arrowvert u \rVert_{W^{2,2}(\Omega)},
 \end{equation}
 where the constant $C$ depends on the space dimension and $\dist(K, \partial \Omega)$.
 \end{corollary}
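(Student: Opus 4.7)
The proof plan is a standard elliptic bootstrap built on top of Theorem~\ref{l3}.

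First I would upgrade Theorem~\ref{l3}, which gives an $L^\infty$-bound for $\Delta u$ on $B_{1/3}$ whenever $B_1 \subset\subset \Omega$, into a genuine interior estimate on arbitrary compact subsets. By translating the estimate to balls centered at points $x_0 \in K$, and by rescaling $u(x) \mapsto r^{-3} u(rx)$ (which preserves the biharmonic obstacle problem and relates the $W^{2,2}$-norms in a controlled way), I would cover $K$ by finitely many such balls and conclude
\begin{equation*}
\|\Delta u\|_{L^\infty(K')} \leq C \|u\|_{W^{2,2}(\Omega)}
\end{equation*}
for any intermediate open set $K \subset\subset K' \subset\subset \Omega$, with $C$ depending on $n$ and $\dist(K', \partial\Omega)$.

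Next I would view $u$ as a solution of the Poisson equation $\Delta u = f$ on $K'$ with $f := \Delta u \in L^\infty(K') \subset L^p(K')$ for every $p < \infty$. Applying the interior Calder\'on--Zygmund inequality (Theorem~9.11 of \cite{GT}, already invoked in Corollary~\ref{cor23}) on nested subdomains $K \subset\subset K'' \subset\subset K'$ yields, for every $p \in (1,\infty)$,
\begin{equation*}
\|u\|_{W^{2,p}(K)} \leq C_p \bigl( \|f\|_{L^p(K'')} + \|u\|_{L^p(K'')} \bigr).
\end{equation*}
The $L^p$-norm of $u$ on $K''$ is absorbed into $\|u\|_{W^{2,2}(\Omega)}$: start from the Sobolev embedding of $W^{2,2}_{\mathrm{loc}}$ into $L^{p_0}_{\mathrm{loc}}$ for some $p_0>2$, then iterate the Calder\'on--Zygmund step together with $W^{2,p}\hookrightarrow L^q$ finitely many times until the exponent $p$ of interest is reached.

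Finally, given any $\alpha \in (0,1)$, I choose $p > n/(1-\alpha)$ and apply the Morrey embedding $W^{2,p}(K) \hookrightarrow C^{1,\alpha}(K)$ to obtain
\begin{equation*}
\|u\|_{C^{1,\alpha}(K)} \leq C \|u\|_{W^{2,p}(K)} \leq C \|u\|_{W^{2,2}(\Omega)},
\end{equation*}
which is the claimed estimate \eqref{1alpha}. I do not foresee any genuine obstacle: the main substance is Theorem~\ref{l3}, and everything that follows is standard elliptic bootstrapping. The only mild bookkeeping is in the covering/rescaling of the first step and in verifying, in low dimensions where $W^{2,2}$ does not yet embed into $L^\infty$, that the iteration indeed terminates at arbitrarily large $p$.
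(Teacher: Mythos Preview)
Your proposal is correct and follows essentially the same route as the paper's own proof: first upgrade Theorem~\ref{l3} to an interior $L^\infty$-bound on $\Delta u$ by a covering argument, then apply the Calder\'on--Zygmund inequality and Sobolev (Morrey) embedding. The paper simply records these three steps in two sentences; your write-up is a more detailed unpacking of the same argument, including the $L^p$-bootstrap that the paper leaves implicit.
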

 
 \begin{proof}
  It follows from Theorem \ref{l3} via a standard covering argument, that 
   \begin{equation*}
  \Arrowvert \Delta u \rVert_{L^\infty(K)}\leq C \Arrowvert u \rVert_{W^{2,2}(\Omega)}.
 \end{equation*}
 Then inequality \eqref{1alpha} follows from the Calder\'{o}n-Zygmund inequality and the Sobolev embedding theorem.
 \end{proof}

 According to Corollary \ref{c1alpha}, $ u $ is a continuous function in $\Omega$, and therefore
 $\Omega_u:=\{u>0\}$ is an open subset of $\Omega$. We define the free boundary
\begin{equation}
\Gamma_u = \partial\Omega_u \cap \Omega.
\end{equation}
It follows from our discussion that the measure $\mu_u= \Delta^2 u$ is supported on $\Gamma_u$.

\section{Regularity of the free boundary}

In this section we investigate the regularity of the free boundary $\Gamma_u$, under the assumption that 
the solution to the biharmonic obstacle problem is close to the one-dimensional
solution $\frac{1}{6}(x_n)^3_+$.

\subsection{One-dimensional solutions}

First we find the explicit solution to the  biharmonic obstacle problem in the interval $ (0,1) \subset \mathbb{R}$.

\begin{example} \label{e1}
The minimiser $ u_0$ of the functional
\begin{equation}
J[u]=\int_0^1 (u''(x))^ 2 dx,
\end{equation}
over nonnegative functions $u \in W^{2,2}(0,1)$,  with boundary conditions
$u(0)=1,u'(0)=\lambda<-3$ and $u(1)=0,u'(1)=0$, is a piecewise $3$-rd order polynomial,
\begin{equation}
 u_0(x)= \frac{\lambda^3}{3^3} \left(x+ \frac{3}{\lambda} \right)_-^3, ~x \in (0,1),
\end{equation}
hence $ u_0 \in C^{2,1}(0,1)$.
\end{example}

\begin{proof}
Let $ u_0 $ be the minimiser to the given biharmonic 
obstacle problem.  
If $ 0<x_0 <1$, and $ u_0(x_0)>0$, then $\int u_0'' \eta'' = 0$, for all
infinitely differentiable functions $\eta$ compactly supported in a small ball
centered at $x_0$. Hence the minimiser $ u_0$ has a fourth order derivative, $ u_0^{(4)}(x) =0 $ if 
$ x \in \{ u_0>0 \}$.
Therefore $u_0$ is a piecewise polynomial of degree less than or equal to three.
Denote by $\gamma \in (0,1]$ the first point where the
graph of $u_0$ hits the $x$-axes. Our aim is find the explicit value of $\gamma$. 
Then we can also compute the minimiser $ u_0$.

\par
Observe that $u_0(\gamma)=0$, and $u'_0(\gamma)=0$, since $u'_0$ is an
absolutely continuous function in $(0,1)$.
Taking into account the boundary conditions at the points $0$ and $\gamma$,
we can write $u_0(x)= ax^ 3+bx^2+\lambda x+1$ in $(0,\gamma)$,  where
\begin{equation*}
a=\frac{\lambda \gamma +2}{\gamma^3}, ~~ b=-\frac{2\lambda \gamma +3}{\gamma^2}.
\end{equation*}
We see that the point $\gamma$ is a zero of second order for the third order 
 polynomial $u_0$, and
$u_0 \geq0$ in $(0, \gamma]$. That means the third zero is not on the open interval 
$(0, \gamma)$, hence $\gamma \leq -\frac{3}{\lambda}$.

\par
Consider the function
\begin{equation*}
 F(\gamma):=\int_0^{\gamma}(u''(x))^ 2 dx,
\end{equation*}
then  
$ F(\gamma)=\frac{4}{\gamma^3}(\lambda^2\gamma^2+3\lambda \gamma+3)$. 
Hence $F'(\gamma)=-\frac{4}{\gamma^4}(\lambda \gamma +3)^2$, showing that 
the function $F$ is decreasing, so it achieves minimum at the point $\gamma = -\frac{3}{\lambda}$.
Therefore we may conclude that 
\begin{equation}
 u_0(x)= \frac{\lambda^3}{3^3} \left(x+ \frac{3}{\lambda} \right)_-^3, ~x \in (0,1),
\end{equation}
and $\gamma = -\frac{3}{\lambda} $ is a free boundary point.
Observe that $u''(\gamma)=0$, and $ u''$ is a continuous function, but $ u'''$ has a jump discontinuity
at the free boundary point $\gamma = -\frac{3}{\lambda}  $.

\end{proof}

\par
The example above characterises one-dimensional solutions. It also 
 tells us that one-dimensional solutions are $C^{2,1}$, and in general are not $C^3$.

\subsection{The class $ \mathscr{B}_\kappa^\varrho(\varepsilon)$ of solutions to the biharmonic obstacle problem }

Without loss of generality, we assume that $ 0\in \Gamma_u$, and study the regularity of the free
boundary, when $u \approx \frac{1}{6}(x_n)^3_+$.

\par
Let us start by recalling the definition of non-tangentially accessible domains, \cite{JK}.
\begin{definition} \label{nta}
 A bounded domain $ D \subset \mathbb{R}^n$ is called non-tangentially accessible (abbreviated NTA)
 when there exist constants $M$,  $ r_0 $ and a function $ l :\mathbb{R}_+ \mapsto \mathbb{N}$ such that 
 \begin{enumerate}
  \item 
  $ D$  satisfies the corkscrew condition; that is 
  for any $ x_0 \in \partial D$ and any $r<r_0$, there exists $P=P(r,x_0) \in D$ such that
  \begin{align}
   M^{-1}r< \lvert P-x_0\rvert<r ~ \textrm{ and } ~\dist(P, \partial D) > M^{-1}r.
  \end{align}
  \item
  $ D^c:= \mathbb{R}^n \setminus D$ satisfies the corkscrew condition.
\item
Harnack chain condition; if $ \epsilon>0 $ and $ P_1, P_2 \in D$, $ \dist (P_i,\partial D)>\epsilon$, 
  and $ \lvert P_1-P_2 \rvert<C \epsilon$, then there exists a Harnack chain from $P_1$ to $P_2$ whose length $ l$
  depends on $C $, but not on $ \epsilon$, $l=l(C)$.
  A Harnack chain from $P_1$ to $P_2$ is a chain of balls $ B_{r_k}(x^k)$, $k=1,...,l $ such that $ P_1 \in B_{r_1}(x^1)$,
   $ P_2 \in B_{r_l}(x^l)$, $ B_{r_k}(x^k) \cap  B_{r_{k+1}}(x^{k+1}) \neq \emptyset $, and
   \begin{align}
    M r_k> \dist( B_{r_k}(x^k), \partial D)>M^{-1} r_k.
   \end{align}

 \end{enumerate}

 \end{definition}

\par
Let us define rigorously, what we mean by $u \approx \frac{1}{6}(x_n)^3_+$.

\begin{definition} \label{d1}
Let $u \geq 0$ be the solution to the biharmonic obstacle problem in a domain $\Omega$, $ B_2 \subset\subset \Omega$ and
assume that $ 0 \in \Gamma_u$ is a free boundary point.
We say that 
$ u \in \mathscr{B}^{\varrho}_{\kappa}(\varepsilon)$, if the following assumptions are satisfied:
\begin{enumerate}
\item \label{itm:first}
$ u $ is almost one dimensional, that is
\begin{equation*}
 \Arrowvert \nabla' u\rVert_{W^{2,2}(B_2)} \leq \varepsilon,
  \end{equation*}
  where $ \nabla':= \nabla-e_n \frac{\partial}{\partial x_n}$.

\item \label{itm:second}
The set  $\Omega_u :=\{u > 0\} $ is an NTA domain with constants  
$  r_0=M^{-1}= \varrho$, and with a function $l$, indicating the length of a Harnack chain.

\item \label{itm:third}
There exists $2 >t>0$, such that $ u=0$ in $ B_2 \cap \{ x_n < -t \}$.

\item \label{itm:forth}
We have the following normalisation
\begin{equation} \label{nn}
 \lVert D^3 u \rVert_{L^2(B_1)}= \frac{1}{6} \left\| D^3  (x_n)_+^3 \right\|_{L^2(B_1)}
 =\frac{\lvert B_1\rvert^\frac{1}{2}}{2^\frac{1}{2}}:= \omega_n,
\end{equation}
and we also assume that 
\begin{equation} \label{bdd32}
 \lVert D^3 u \rVert_{L^2(B_2)} < \kappa, 
\end{equation}
where $ \kappa > \frac{1}{6} \left\| D^3  (x_n)_+^3  \right\|_{L^2(B_2)}= 2^\frac{n}{2}\omega_n$.

\end{enumerate}
\end{definition}

In the notation of the class $ \mathscr{B}_\kappa^\varrho(\varepsilon)$ we did not include the length
function $ l$, since 
later it does not appear in our estimates.  For the rest of this paper we will assume that we have a fixed 
length function $l$.
Later on in Corollary \ref{c2} we will see
that the precise value of the parameter $t $ in assumption \ref{itm:third} is not very important, and therefore
we also omit the parameter $t$ in our notation.

\par
Evidently $\frac{1}{6} (x_n)_+^3 \in \mathscr{B}^{\varrho}_{\kappa}(\varepsilon)$, for any $\varepsilon>0$
and $\varrho>0$. 
Next we show that if $u\in \mathscr{B}_\kappa^{\varrho}(\varepsilon)$, with $\varepsilon >0$ small,
then $ u \approx \frac{1}{6} (x_n)_+^3$ in $ W^{3,2}(B_1)$.

\par
From now on $\kappa >2^\frac{n}{2}\omega_n$ and $1>\varrho >0$ are fixed parameters.

\begin{lemma} \label{l4}
There exists a modulus of continuity $\sigma=\sigma(\varepsilon)\geq 0 $, such that 
\begin{equation} \label{near}
 \left\| u(x)-\frac{1}{6} (x_n)_+^3\right\| _{W^{3,2}(B_1)}\leq \sigma(\varepsilon),
\end{equation}
for any $u\in \mathscr{B}^{\varrho}_\kappa (\varepsilon)$.
\end{lemma}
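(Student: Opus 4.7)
The plan is a standard compactness/contradiction argument. Suppose the conclusion fails: there exist $\delta_0>0$ and $u_k\in \mathscr{B}^{\varrho}_{\kappa}(\varepsilon_k)$ with $\varepsilon_k\to 0$ such that
\[
\bigl\|u_k-\tfrac{1}{6}(x_n)_+^3\bigr\|_{W^{3,2}(B_1)}\ge \delta_0.
\]
By assumption~\ref{itm:forth} together with Corollaries~\ref{cor23} and \ref{c1alpha}, $\{u_k\}$ is uniformly bounded in $W^{3,2}(B_{3/2})\cap C^{1,\alpha}(\overline{B_{3/2}})$. Passing to a subsequence, $u_k\rightharpoonup u_\infty$ weakly in $W^{3,2}(B_{3/2})$, strongly in $W^{2,2}(B_{3/2})$, and uniformly together with $\nabla u_k$ on $\overline{B_{3/2}}$.

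Next I would identify $u_\infty$. From assumption~\ref{itm:first} and strong $W^{2,2}$ convergence, $\nabla' u_\infty\equiv 0$, so $u_\infty(x)=f(x_n)$ is one-dimensional. A standard stability argument---passing to the limit in the variational inequality $\int \Delta u_k\Delta\phi\ge 0$ for nonnegative admissible test functions, and invoking interior uniqueness from Lemma~\ref{1}---shows that $u_\infty$ is itself a biharmonic obstacle solution on $B_{3/2}$. The one-dimensional analysis of Example~\ref{e1} (on each connected component of $\{f>0\}$ the function $f$ is a cubic polynomial with $f=f'=f''=0$ at each finite free boundary point) then forces $f(x_n)=A(x_n-x_0)_+^3$ for some $A\ge 0$ and $x_0\in\mathbb{R}$.

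Now I would fix the parameters. The corkscrew property from assumption~\ref{itm:second} applied to the coincidence set $\{u_k=0\}$ at the origin, combined with uniform convergence, forces $\{f=0\}$ to accumulate at $0$, hence $x_0\ge 0$; the dual corkscrew condition applied to $\Omega_{u_k}$ together with uniform $C^{1,\alpha}$ convergence forces $\{f>0\}$ to reach every neighbourhood of $0$, giving $x_0\le 0$, so $x_0=0$. For the coefficient $A$, since every mixed third derivative of $u_k$ contains at least one tangential derivative, assumption~\ref{itm:first} gives
\[
\sum_{|\beta|=3,\ \beta\ne (0,\ldots,0,3)}\|\partial^\beta u_k\|_{L^2(B_1)}^2\le C\varepsilon_k^2,
\]
which combined with the normalization \eqref{nn} yields $\|\partial_n^3 u_k\|_{L^2(B_1)}\to \omega_n$. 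Weak lower semicontinuity gives $6A\omega_n=\|\partial_n^3 u_\infty\|_{L^2(B_1)}\le \omega_n$, so $A\le 1/6$; the reverse bound $A\ge 1/6$ follows by testing the minimization property of $u_k$ against the explicit comparison function $\tfrac16(x_n)_+^3$, whose coincidence set is contained in $\{u_k=0\}$ up to an $\varepsilon_k$-correction coming from the NTA corkscrew, and then passing to the limit in $J[u_k,B_{3/2}]\le J[\tfrac16(x_n)_+^3,B_{3/2}]$, which pins $A=1/6$.

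Finally, with $u_\infty=\tfrac16(x_n)_+^3$ identified, $\|D^3 u_\infty\|_{L^2(B_1)}=\omega_n=\lim_k\|D^3 u_k\|_{L^2(B_1)}$; weak convergence $D^3 u_k\rightharpoonup D^3 u_\infty$ in $L^2(B_1)$ together with convergence of norms yields strong $L^2$ convergence by the Radon--Riesz property. Combined with the strong $W^{2,2}(B_1)$ convergence already obtained, this upgrades to strong $W^{3,2}(B_1)$ convergence, contradicting the choice $\|u_k-\tfrac16(x_n)_+^3\|_{W^{3,2}(B_1)}\ge \delta_0$. The main obstacle throughout is pinning down the sharp coefficient $A=1/6$: weak lower semicontinuity only delivers the upper bound cheaply, and the reverse inequality rests on the interplay between the minimization property of the $u_k$ and the quantitative non-degeneracy built into the NTA hypothesis.
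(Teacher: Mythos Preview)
Your compactness skeleton matches the paper's, but two of the key identifications have genuine gaps.

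\textbf{Ruling out $x_0>0$.} The interior corkscrew for $\Omega_{u_k}$ only gives a ball $B_{\varrho r}(P_k)\subset\Omega_{u_k}$ near the origin on which $u_k>0$; uniform $C^{1,\alpha}$ convergence yields merely $u_\infty\ge 0$ there, which is no obstruction to $u_\infty\equiv 0$ on that ball---precisely what happens when $x_0>0$. The paper uses the full NTA structure, not just the corkscrew: it runs a Harnack chain inside $\Omega_{u_k}$ from $P_0$ to the reference point $e_n$, and applies Harnack's inequality along the chain to a nonnegative harmonic majorant $w^j_+$ of $\Delta u^j$. Since $\Delta u^j(e_n)\to 6c(1-a)>0$ (this positivity comes from the strong convergence established in Step~1 below, which gives $c>0$, $a<1$), a uniform lower bound on $\Delta u^j$ is propagated back to $P_0$, contradicting $\Delta u^j\to 0$ there. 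The Harnack chain is indispensable.

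\textbf{Pinning $A=1/6$.} Your comparison step fails twice. First, $J[u_k,B_{3/2}]\le J[\tfrac16(x_n)_+^3,B_{3/2}]$ is not justified: $u_k$ minimizes on $\Omega\supset\!\supset B_2$ with its own boundary data, and $\tfrac16(x_n)_+^3$ is not admissible for that problem on $B_{3/2}$. Second, even granting the inequality, lower semicontinuity gives $J[u_\infty,B_{3/2}]\le J[\tfrac16(x_n)_+^3,B_{3/2}]$, i.e.\ $36A^2\le 1$, which is the \emph{upper} bound again, not the lower one you need. The paper sidesteps this entirely by establishing strong $W^{3,2}(B_1)$ convergence \emph{before} identifying the constants, via Lemma~\ref{zeta}: testing with $\zeta\,\partial_n u^j$ yields $\limsup_j\int\zeta(\Delta\partial_n u^j)^2\le\int\zeta(\Delta\partial_n u^0)^2$, matching the weak lower bound, hence $\nabla\Delta u^j\to\nabla\Delta u^0$ in $L^2(B_1)$. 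The normalization $\|D^3 u^j\|_{L^2(B_1)}=\omega_n$ then passes directly to the limit, and once $a=0$ is known this forces $c=1/6$ with no comparison argument. Your Radon--Riesz step becomes superfluous, since strong convergence is already in hand; conversely, without an independent route to $A=1/6$, Radon--Riesz cannot be invoked and the argument is circular.
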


\begin{proof}
We argue by contradiction. Assume that there exist $\sigma_0>0$ and a 
sequence of solutions, $ u^j \in \mathscr{B}_\kappa^{\varrho}({\varepsilon_j})$, such that
\begin{equation*} 
 \Arrowvert \nabla ' u^j  \rVert_{W^{2,2}(B_2)} = \varepsilon_j \rightarrow 0, 
 \end{equation*}
but
\begin{equation} \label{sigma0}
 \left\| u^j(x)-\frac{1}{6} (x_n)^3_+ \right\| _{W^{3,2}(B_1)} > \sigma_0 >0.
 \end{equation}

According to assumption \ref{itm:forth} in Definition \ref{d1},
$ \Arrowvert D^3 u^j \rVert_{L^2(B_2)} < \kappa $ and according to assumption \ref{itm:second} the functions
$u^j$ are vanishing
on an open subset of $B_2$. Therefore 
it follows from the Poincar\'{e} inequality that 
$ \Arrowvert u^j \rVert_{W^{3,2}(B_2)} \leq C(\varrho,  n)\kappa $.
Hence  
 up to a subsequence $ u^j \rightharpoonup u^0 $ weakly in $ W^{3,2}(B_2)$,
 $ u^j \rightarrow u^0$ strongly in $ W^{2,2}(B_2)$ and  according to
 Corollary \ref{c1alpha}
 $ u^j \rightarrow u^0 $ in $C^{1,\alpha}(B_{3/2})$. 
 Hence 
 \begin{align*}
 \Arrowvert \nabla' u^0 \rVert_{W^{1,2}(B_2)}= \lim_{j\rightarrow \infty}
 \Arrowvert \nabla' u^j \rVert_{W^{1,2}(B_2)}\leq\lim_{j\rightarrow \infty}\varepsilon_j =0.
 \end{align*}
 This implies that $ u^0$ is a
 1-dimensional solution (depending only on the variable $x_n$).
 Example \ref{e1} tells us that one-dimensional solutions in the interval $(-2,2)$ have the form 
 \begin{equation*}
  u^0(x_n) = c_1(x_n-a_1)^3_- + c_2(x_n -a_2)^3_+,
  \end{equation*}
  where $ c_1, c_2 \geq 0$ and 
 $-2 \leq a_1 \leq a_2 \leq 2 $ are constants. According to assumption \ref{itm:third} in Definition \ref{d1},
 $ u^0= c (x_n-a)^3_+$. In order to obtain a contradiction to assumption \eqref{sigma0}, we 
 need to show that $ u^j \rightarrow  u_0= \frac{1}{6}(x_n)_+^3$ in $ W^{3,2}(B_1)$. The proof of the last statement
 can be done in two steps.

 \par
\textit{ Step 1:} We show that 
\begin{equation}\label{32loc}
 u^j \rightarrow c(x_n-a)_+^3~\textrm{ in } W^{3,2}(B_1).
\end{equation}
Denote $ u^j_n:= \frac{\partial u^j}{\partial x_n} \in W^{2,2}(B_2) $, $ j \in \mathbb{N}_0$, and
let $ \zeta \in C_0^\infty(B_\frac{3}{2})$ be a nonnegative function, such that 
 $ \zeta \equiv 1 $ in $B_1$. According to Lemma \ref{zeta},
 \begin{equation*}
   0\geq  \int_{B_2} \Delta (\zeta u^j_n) \Delta u^j_n = 
\int_{B_2} u^j_n \Delta \zeta  \Delta u^j_n+ \int_{B_2}\zeta ( \Delta u^j_n )^2+
2 \int_{B_2} \nabla \zeta \nabla u^j_n \Delta u^j_n,
 \end{equation*}
and therefore
\begin{equation}\label{liminf1}
\begin{aligned}
 \limsup_{j\rightarrow \infty} \int_{B_2} \zeta (\Delta u^j_n )^2 \leq 
 - \lim_{j\rightarrow\infty} \int_{B_2} u^j_n \Delta \zeta  \Delta u^j_n-
 2 \lim_{j\rightarrow\infty} \int_{B_2} \nabla \zeta \nabla u^j_n \Delta u^j_n \\
 =
 -\int_{B_2}  u^0_n \Delta \zeta  \Delta u^0_n -
 2 \int_{B_2} \nabla \zeta \nabla u^0_n \Delta u^0_n =\int_{B_2} \zeta (\Delta u^0_n)^2,
\end{aligned}
\end{equation}
where in the last step we used integration by parts.

\par
On the other hand,
since $ \Delta u^j_n \rightharpoonup \Delta u^0_n$ weakly in $ L^2(B_2)$, it follows that
\begin{equation}\label{liminf2}
 \liminf_{j\rightarrow\infty} \int_{B_2} \zeta (\Delta u^j_n)^2 \geq 
  \int_{B_2} \zeta (\Delta u^0_n)^2.
\end{equation}

Therefore, we may conclude from \eqref{liminf1} and \eqref{liminf2} that
\begin{equation*}
\lim_{j\rightarrow\infty} \int_{B_2} \zeta (\Delta u^j_n)^2 = 
  \int_{B_2} \zeta (\Delta u^0_n)^2.
\end{equation*}
Hence we obtain
\begin{align*}
  \frac{\partial \Delta u^j}{\partial x_n} \rightarrow  \frac{\partial \Delta u^0}{\partial x_n} 
  \textrm{ in } ~ L^2(B_1).
  \end{align*}
Similarly $  \frac{\partial \Delta u^j}{\partial x_i} \rightarrow 0$ in 
$ L^2(B_1)$, for $i=1,..., n-1$.
Knowing that 
\begin{align*}
\Arrowvert \nabla \Delta u^j -\nabla \Delta u^0\rVert_{L^2(B_1)} \rightarrow 0, ~\textrm{ and }~
 \Arrowvert  u^j - u^0\rVert_{W^{2,2}(B_2)} \rightarrow 0,
 \end{align*}
we may apply the Calder\'{o}n-Zygmund inequality, and conclude  \eqref{32loc}.
 Recalling that $\Arrowvert D^3 u^j \rVert_{L^2(B_1)}=\omega_n$, we see that 
 \begin{equation} \label{u0omegan}
  \Arrowvert D^3 u^0 \rVert_{L^2(B_1)}=\omega_n.
 \end{equation}
Since $ u^0 = c (x_n-a)_+^3\geq 0$, it follows that 
\begin{equation*}
 \Arrowvert D^3 u^0 \rVert^2_{L^2(B_1)}= c^2 \mathcal{L}^n( B_1 \cap \{x_n > a\} ) > 0,
 \end{equation*}
 hence 
 \begin{equation} \label{*}
   c >0~ \textrm{ and } ~  a<1.
 \end{equation}

 \par
 \textit{Step 2:} We show that $ a=0$ and $ c=\frac{1}{6}$.
 Taking into account that $u^j\rightarrow u^0$ in $C^{1,\alpha}$ and
 $ u^j(0)=0$, we conclude that 
 $ u^0(0)= 0$, thus $ a\geq 0 $. 
 Assume that  $ a>0$. Since $ 0 \in \Gamma_j$, and $ \Omega_j$ is an NTA domain, 
 there exists 
 $ P_j=P(r,0) \in \Omega_j$, for $ 0<r<\min ( \varrho, a/2)$ as in the corkscrew conditon,
 \begin{equation*}
  \varrho r <\lvert P_j\rvert <r~ \textrm{and} ~\dist(P_j, \partial \Omega_j)> \varrho r.
 \end{equation*}
 Therefore up to a subsequence 
 $ P_j \rightarrow P_0 $, hence  $~r \varrho\leq \lvert P_0\rvert \leq r$, $~ B_{r'}(P_0) \subset \Omega_j, $ for all $ j$
 large enough, where $  0<r' <r \varrho$ is a fixed number.
 Since we have chosen $ r < a/2$, we may conclude that
 \begin{equation*}
 B_{r'}(P_0) \subset \{ x_n < a\} \cap \Omega_j .
 \end{equation*}
 Thus $ \Delta u^j $ is a sequence of harmonic functions in the ball $ B_{r'}(P_0)$, and therefore
  \begin{equation} \label{harn}
   \Delta u^j \rightarrow 0 \textrm{~~locally uniformly in} ~B_{r'}(P_0),
  \end{equation}
  according to \eqref{32loc}.
  
  \par
    Let $ Q:= e_n $, then $ u^0(Q)= c (1-a)^3 >0$, since $u^j \rightarrow u^0 $ uniformly in $ B_{ 3/2}$, we see that
    $ u^j(Q) >0$ for large $j$, and 
    $Q \in \Omega_j$. Therefore there exists a Harnack chain connecting $P_0$ with $Q$; 
 $\{  B_{r_1}(x^1),  B_{r_2}(x^2),...,   B_{r_l}(x^l) \} \subset \Omega_j$, whose length $l$ does not depend on $j$.
 Denote by $ K^j := \cup_i B_{r_i}(x^i) \subset \subset \Omega_j $, and let $ V^j \subset \subset K^j \subset \subset \Omega_j $
 where
  $V^j$ is a regular domain, such that $  \dist(K^j,\partial V^j )$ and $ \dist(V^j,\partial \Omega_j )$ depend 
  only on $ r$ and $\varrho$.

 \par
 Let $ w^j_+$ be a harmonic function in $ V^j$, with boundary conditions $ w^j_+= (\Delta u^j)_+ \geq 0 $ on $ \partial V^j$,
 then $ w^j_+ -\Delta u^j$ is a harmonic function in $V^j$, and $ w^j_+ -\Delta u^j = (\Delta u^j)_- \geq 0 $ on 
 $ \partial V^j$ , hence 
 \begin{equation}\label{wj+}
  0 \leq w^j_+-\Delta u^j \leq  \Arrowvert (\Delta u^j)_-\rVert_{L^\infty(V^j)}  \textrm{  in  }~ V^j.
 \end{equation}
Let us observe that $  \Delta u ^j \rightarrow \Delta u^0 =6c (x_n-a)_+$ implies that
$ \Arrowvert (\Delta u ^j )_-\rVert_{L^2(B_2)} \rightarrow 0$. Since $ (\Delta u^j)_-$ is a 
subharmonic function in $ \Omega_j$, and $ V^j \subset \subset \Omega_j$ it follows that
\begin{equation*}
 \Arrowvert (\Delta u^j)_-\rVert_{L^\infty(V^j)} \leq 
C(n,r, \varrho)  \Arrowvert (\Delta u ^j )_-\rVert_{L^2(B_2)} \rightarrow 0.
\end{equation*}

 \par
 So $ w^j_+ $ is a nonnegative harmonic function in $ V^j$, and by the Harnack inequality 
 \begin{align*}
  C_H \inf_{  B_{r_l}(x^l)}  w^j_+ \geq \sup_{  B_{r_l}(x^l)}  w^j_+ \geq 
  w^j_+(e_n) \geq
   \Delta u^j(e_n) \geq \frac{1}{2} \Delta u_0(e_n)= 3c(1-a),
 \end{align*}
 if $ j$ is large, where $C_H$ is the constant in Harnack's inequality, it depends on $\varrho$ and $r$ but not on $j$. 
 Denote $ C(a, c):= 3c(1-a)>0$ by \eqref{*}.
Applying the Harnack inequality again, we see that 
\begin{equation*}
  C_H \inf_{  B_{r_{l-1}}(x^{l-1})}  w^j_+ \geq \sup_{B_{r_{l-1}}(x^{l-1})  }  w^j_+ 
\geq  \inf_{  B_{r_l}(x^l)} w^j_+ > \frac{C(a, c)}{C_H}.
 \end{equation*}
 Inductively, we obtain that 
 \begin{equation} \label{ch}
   C_H \inf_{ B_{r_1}(x^1)}  w^j_+ \geq  \sup_{ B_{r_{1}}(x^{1})}  w^j_+ > \frac{C(a, c)}{C_H^{l-1}},
 \end{equation}
 where $ l$ does not depend on $j$. Hence 
 $ w^j_+(P_0) \geq \frac{C(a, c)}{C_H^{l}}$ for all $ j$ large, and according to \eqref{wj+},
 \begin{equation*}
  \lim_{j\rightarrow\infty}\Delta u^j (P_0) \geq \frac{C(a, c)}{C_H^{l}}>0,
  \end{equation*}
 the latter contradicts \eqref{harn}.
 Therefore we may conclude that  $a=0$.

\par
Recalling that $\Arrowvert D^3 u^0 \rVert_{L^2(B_1)}=\omega_n$, we see that $ c = \frac{1}{6}$, but then we obtain
  $  u^j \rightarrow \frac{1}{6}(x_n)_+^3 $ in $ W^{3,2}(B_1)$
which is a contradiction, since we assumed \eqref{sigma0}.

\end{proof}

\par
 Lemma \ref{l4} has an important corollary, which will be very useful in our later discussion.

 \begin{corollary} \label{c2}
 Let $ u $ be the solution to the biharmonic obstacle problem, $  u \in \mathscr{B}^\varrho_\kappa(\varepsilon)$.
 Then for any fixed $ t>0$ we have that $ u(x)=0 $ in $ B_2 \cap\{ x_n <-t \}$,  provided $\varepsilon=\varepsilon(t)>0$ is small.
 \end{corollary}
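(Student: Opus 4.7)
The plan is to argue by contradiction, essentially transplanting Step 2 of the proof of Lemma \ref{l4}. Suppose the conclusion fails: there exist $t_0>0$, a sequence $u^j\in\mathscr{B}^\varrho_\kappa(\varepsilon_j)$ with $\varepsilon_j\to0$, and points $x^j\in B_2\cap\{x_n<-t_0\}$ with $u^j(x^j)>0$. The compactness argument in the proof of Lemma \ref{l4} applies verbatim: up to a subsequence $u^j\to u^0:=\frac{1}{6}(x_n)_+^3$ weakly in $W^{3,2}(B_2)$, strongly in $W^{2,2}(B_2)$, and in $C^{1,\alpha}$ on compact subsets of $B_2$; in particular $\Delta u^j\to(x_n)_+$ in $L^2(B_2)$ and $(\Delta u^j)_-\to 0$ in $L^2(B_2)$. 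Extracting once more, $x^j\to x^0\in\overline{B_2}\cap\{x_n\le -t_0\}$.

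The NTA structure is then used to replace $x^j$ by a point $P^j\in\Omega_{u^j}$ sitting at uniformly positive distance from $\Gamma_{u^j}$. If $\dist(x^j,\Gamma_{u^j})$ is bounded below along a subsequence, set $P^j:=x^j$. Otherwise, let $y^j\in\Gamma_{u^j}$ be nearest to $x^j$; then $|y^j-x^j|\to 0$, so $y^j\in\{x_n<-t_0/2\}$ for $j$ large. Fix $r<\min(\varrho,t_0/4)$ and apply the corkscrew condition for $\Omega_{u^j}$ at $y^j$: this produces $P^j\in\Omega_{u^j}$ with $\varrho r<|P^j-y^j|<r$ and $\dist(P^j,\Gamma_{u^j})>\varrho r$, whence $P^j\in\{x_n<-t_0/4\}$ with uniform interior distance. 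Since $u^0(e_n)=\frac{1}{6}$ and the convergence is uniform near $e_n$, $\dist(e_n,\Gamma_{u^j})\ge c_1>0$ for $j$ large, and the Harnack-chain condition yields a chain inside $\Omega_{u^j}$ joining $P^j$ to $e_n$ of length $l$ depending only on the fixed parameters.

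Now I reuse the harmonic-replacement step from Lemma \ref{l4}. On a regular subdomain $V^j\subset\subset\Omega_{u^j}$ containing the chain and with $\dist(V^j,\Gamma_{u^j})$ uniform in $j$, let $w^j_+$ be the harmonic function in $V^j$ with boundary data $(\Delta u^j)_+$. Since $\Delta u^j$ is harmonic on $\Omega_{u^j}$ and $(\Delta u^j)_-$ is subharmonic there (as the positive part of a harmonic function), the maximum principle gives $0\le w^j_+-\Delta u^j\le\lVert(\Delta u^j)_-\rVert_{L^\infty(V^j)}$, and the submean-value inequality together with $\lVert(\Delta u^j)_-\rVert_{L^2(B_2)}\to 0$ forces the right-hand side to zero. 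Because $w^j_+(e_n)\ge\Delta u^j(e_n)\to 1$, iterating Harnack along the chain yields $w^j_+(P^j)\ge C_H^{-l}>0$, and consequently $\Delta u^j(P^j)\ge c_2>0$ for $j$ large.

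For the contradiction, $\Delta u^j$ is subharmonic on all of $B_2$, so choosing $\rho$ small enough that $B_\rho(P^j)\subset B_2\cap\{x_n<-t_0/8\}$ gives
\[
\Delta u^j(P^j)\le\fint_{B_\rho(P^j)}\Delta u^j\le C\rho^{-n/2}\lVert\Delta u^j\rVert_{L^2(B_\rho(P^j))}\longrightarrow 0,
\]
since $\Delta u^0=(x_n)_+$ vanishes on $B_\rho(P^j)$ and $\Delta u^j\to\Delta u^0$ in $L^2(B_2)$. This contradicts the lower bound of the previous paragraph. The main obstacle is precisely the case analysis in the second paragraph: because $x^j$ may approach $\Gamma_{u^j}$ arbitrarily closely, the corkscrew condition is the indispensable tool that replaces $x^j$ by a point $P^j$ at uniform interior distance, so that the Harnack-chain and harmonic-replacement machinery of Lemma \ref{l4} transfers without modification.
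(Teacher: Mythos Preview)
Your argument is correct and follows essentially the same route as the paper: contradiction, Lemma \ref{l4} to get $u^j\to\frac16(x_n)_+^3$, a corkscrew point $P^j$ deep in $\{x_n<0\}\cap\Omega_{u^j}$, a Harnack chain to $e_n$, and the harmonic-replacement comparison from Step~2 of Lemma \ref{l4}. The paper's proof differs only in two cosmetic respects: it takes $x^j\in\Gamma_{u^j}\cap\{x_n<-t_0\}$ directly (this is available because each $u^j$ already vanishes on $\{x_n<-t_j\}$ by assumption~\ref{itm:third}, so connectedness forces a free boundary point in the slab), thereby avoiding your case split; and for the final contradiction it uses that $\Delta u^j$ is harmonic in a fixed small ball $B_{r'}(P^0)\subset\Omega_{u^j}$ and converges to $0$ there, rather than the global subharmonicity of $\Delta u^j$ and the mean-value inequality that you invoke. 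Both variants are equivalent in strength.
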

 
 \begin{proof}
  Once again we argue by contradiction. Assume that there exist $t_0 >0 $ and a sequence of solutions
  $u_j \in \mathscr{B}^\varrho_\kappa({\varepsilon_j}) $,  $ \varepsilon_j \rightarrow 0$, such that
  $ x^j \in B_2 \cap \Gamma_j $, and
   $ x^j_n < - t_0$. For $ 0<r<\min(\varrho, {t_0}/{2})$ choose $ P^j=P(r,x^j) \in \Omega_j$  as 
   in the corkscrew condition,
   \begin{align*}
    r \varrho < \lvert  x^j-P^j\rvert <r, ~~B_{r\varrho}(P^j) \subset \Omega_j.
   \end{align*}
 Upon passing to a subsequence, we may assume that $ P^j \rightarrow P^0$. Fix $0<r' <r \varrho$, then for large $j$
 \begin{equation*}
   B_{r'}(P^0) \subset \subset \Omega_j \cap \{ x_n < 0\}.
   \end{equation*}
   Hence $ \Delta u ^j$ is a sequence of harmonic functions in $ B_{r'}(P^0)$.   
 According to Lemma \ref{l4}, $ u^j \rightarrow \frac{1}{6}(x_n)_+^3$, and therefore $ \Delta u^j \rightarrow 0$ in 
 $ B_{r'}(P^0)$, and $ \Delta u^j(e_n)\rightarrow1$.
  Since $ \Omega_j$ is an NTA domain, there exists a Harnack chain connecting $P^0$ with $Q:= e_n \in \Omega_j$; 
 $\{  B_{r_1}(x^1),  B_{r_2}(x^2),...,   B_{r_k}(x^k) \} \subset \Omega_j$, whose length does not depend on $j$. 
 Arguing as in the proof of Lemma \ref{l4}, 
 we will obtain a contradiction to $ \Delta u_j \rightarrow 0$ in 
 $ B_{r'}(P^0)$.
 \end{proof}

\subsection{Linearisation}

Let $\{ u^j\} $ be a sequence of solutions in $ \Omega \supset\supset B_2$,  $u^j \in \mathscr{B}^{\varrho}_\kappa({ \varepsilon_j})$, and 
assume that  
$ \varepsilon_j \rightarrow 0 $ as $ j\rightarrow \infty$.
It follows from Lemma \ref{l4}, that up to a subsequence
\begin{equation} \label{uj}
 u^j \rightarrow \frac{1}{6}(x_n)_+^3 ~ \textrm{ in }~~ W^{2,2}(B_2) \cap C_{loc}^{1,\alpha}(B_2).
\end{equation}

\par
Let us denote
\begin{equation*} 
\delta^j_i:= \left\| \frac{\partial u^j}{\partial x_i} \right\| _{W^{2,2}(B_2)}. 
\end{equation*}
Without loss of generality we may assume that $ \delta^j_i> 0$, for all $ j  \in \mathbb{N}$. 
Indeed, if 
$ \delta^j_i =0$ for all  $j \geq J_0$ large, then
$ u^j$ does not depend on the variable $ x_i$, and the problem reduces to a lower dimensional
case.  Otherwise we may pass to a subsequence satisfying  $ \delta^j_i > 0 $ for all $j$.

\par
Denote 
\begin{equation} \label{vij}
 v^j_i: =\frac{1}{\delta^j_i}\frac{\partial u^j}{\partial x_i}, \textrm{ for } i=1,...,n-1,
\end{equation}
then $ \Arrowvert v^j_i \rVert_{W^{2,2}(B_2)}=1$.
Therefore up to a subsequence $ v^j_i $ converges to a function $v^0_i$ weakly in $ W^{2,2}(B_2) $ and
strongly in $ W^{1,2}(B_2)$.
For the further discussion we need strong convergence $ v^j_i \rightarrow v^0_i$ in $ W^{2,2} $, 
at least locally.

\begin{lemma} \label{l5}
 Assume that $ \{u^j\}$ is a sequence 
 of solutions in $ \Omega \supset \supset B_2$, 
 $ u^j \in \mathscr{B}^\varrho_\kappa ({\varepsilon_j})$, $\varepsilon_j\rightarrow 0$.
 Let $v^j_i$ be the sequence given
 by \eqref{vij}, and assume that
  $ v^j_i \rightharpoonup v^0_i$ weakly in $ W^{2,2}(B_2) $, strongly in $ W^{1,2}(B_2)  $, for $i=1,...,n-1 $, then
 \begin{equation} \label{v0}
  \Delta^2 v^0_i=0 \textrm{ in } B_2^+, ~~v^0_i \equiv 0 \textrm{ in } B_2 \setminus B_2^+.
 \end{equation}
Furthermore, for any $0<R<2$
\begin{equation}
 \Arrowvert v^j_i -v^0_i \rVert_{W^{2,2}(B_{R})} \rightarrow 0.
 \end{equation}
\end{lemma}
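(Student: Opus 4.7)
The plan is to prove both claims by exploiting the biharmonicity of $u^j$ on its positivity set and upgrading the assumed weak $W^{2,2}$-convergence to strong convergence via an energy identity derived from Lemma \ref{zeta}.

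I would first identify the limiting equation \eqref{v0}. Since $\Delta^2 u^j = 0$ on the open set $\Omega_{u^j}$, differentiating weakly in the $x_i$-direction yields $\Delta^2 v^j_i = 0$ on $\Omega_{u^j}$. For any compact $K \subset B_2^+$, Lemma \ref{l4} together with Corollary \ref{c1alpha} gives uniform convergence $u^j \to \tfrac{1}{6}(x_n)_+^3$ on $K$, so $K \subset \Omega_{u^j}$ for all large $j$; hence $\int \Delta v^j_i\, \Delta \varphi = 0$ for every $\varphi \in C_c^\infty(K)$, and passing to the weak limit yields $\Delta^2 v^0_i = 0$ in $B_2^+$. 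For the vanishing on $B_2 \setminus B_2^+$, Corollary \ref{c2} gives $u^j \equiv 0$ on $B_2 \cap \{x_n < -t\}$ for any fixed $t > 0$ and all $j$ large, hence $v^j_i \equiv 0$ there; combined with the $L^2$-convergence and letting $t \to 0^+$, this yields $v^0_i \equiv 0$ in $B_2 \cap \{x_n < 0\}$.

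The main task is the strong $W^{2,2}(B_1)$-convergence. Applying Lemma \ref{zeta} to $u^j$ with a cutoff $\zeta \in C_c^\infty(B_{3/2})$ satisfying $\zeta \equiv 1$ on $B_1$, and dividing by $(\delta^j_i)^2$, I obtain
\[
\int_{B_2} \zeta (\Delta v^j_i)^2 \leq -\int_{B_2} v^j_i \Delta \zeta \, \Delta v^j_i - 2 \int_{B_2} \Delta v^j_i \, \nabla \zeta \cdot \nabla v^j_i.
\]
The strong $W^{1,2}$-convergence $v^j_i \to v^0_i$ together with the weak $L^2$-convergence $\Delta v^j_i \rightharpoonup \Delta v^0_i$ allows passing to the $\limsup$ on the right-hand side.

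The crux is to identify this limit with $\int \zeta (\Delta v^0_i)^2$. Expanding $\Delta(\zeta v^0_i)$ shows that this equality is equivalent to the identity $\int \Delta v^0_i \, \Delta(\zeta v^0_i) = 0$. Since $v^0_i \in W^{2,2}(B_2)$ vanishes on $B_2 \cap \{x_n < 0\}$, its trace and normal derivative on $\{x_n = 0\}$ both vanish, so $\zeta v^0_i \in W^{2,2}_0(B_2^+)$; the biharmonicity of $v^0_i$ in $B_2^+$ then delivers the identity by density of $C_c^\infty(B_2^+)$ in $W^{2,2}_0(B_2^+)$. Combined with the standard weak lower semicontinuity $\liminf \int \zeta (\Delta v^j_i)^2 \geq \int \zeta (\Delta v^0_i)^2$, this forces $\Delta v^j_i \to \Delta v^0_i$ strongly in $L^2(B_1)$; an interior Calder\'{o}n--Zygmund estimate applied to $v^j_i - v^0_i$ then upgrades the convergence to strong $W^{2,2}(B_1)$. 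The main obstacle I anticipate is the careful trace argument identifying $\zeta v^0_i \in W^{2,2}_0(B_2^+)$, which is the linchpin of the integration-by-parts identity that closes the energy estimate.
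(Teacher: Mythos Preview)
Your proposal is correct and follows essentially the same route as the paper: establish \eqref{v0} via Corollary~\ref{c2} and the biharmonicity on compact subsets of $B_2^+$, then use Lemma~\ref{zeta} (divided by $(\delta^j_i)^2$) together with the identity $\int \Delta v^0_i\,\Delta(\zeta v^0_i)=0$ and weak lower semicontinuity to force $\Delta v^j_i\to\Delta v^0_i$ in $L^2(B_1)$, finishing with Calder\'on--Zygmund. The paper records the trace fact $v^0_i=|\nabla v^0_i|=0$ on $\{x_n=0\}$ in a single line and then invokes \eqref{v0} directly for the integration-by-parts identity; your more explicit justification via $\zeta v^0_i\in W^{2,2}_0(B_2^+)$ is exactly what underlies that step.
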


\begin{proof} 
Denote by 
 $ \Omega_j:= \Omega_{u^j},~\Gamma_j:= \Gamma_{u^j} $.
It follows from Corollary \ref{c2} that $ v^0_i \equiv 0 $ in $ B_2 \setminus B_2^+ $, 
hence $v^0_i=\lvert \nabla v^0_i \rvert =0 $ on $\{x_n=0\} \cap B_2$ in the trace sense. 
Moreover, if  $ K \subset \subset B_2^+$ is an open subset, then 
$ K \subset \Omega_j $ for large $j$ by \eqref{uj}. Hence $ \Delta^2 v^j_i =0$ in $ K$, and therefore 
$\Delta^2 v^0_i =0 $ in 
 $ B_2^+ $, and \eqref{v0} is proved.

\par
Now let us proceed to the proof of the strong convergence.
Let  $ \zeta \in C_0^\infty(B_{2})$ be a nonnegative function, such that  $  \zeta \equiv 1$ in 
$B_R$ and $0\leq \zeta \leq 1$ in $B_2$.
It follows from \eqref{v0} that
\begin{equation} \label{limvi0}
  0= \int_{B_{{2}}} \Delta v_i^0 \Delta (\zeta v^0_i) = \int_{B_{{2}}} v^0_i \Delta \zeta  \Delta v^0_i+ \int_{B_{{2}}}\zeta ( \Delta v^0_i )^2+
2 \int_{B_{{2}}} \nabla \zeta \nabla v^0_i \Delta v^0_i. 
\end{equation}

\par
According to Lemma \ref{zeta}
\begin{equation} \label{limvij}
  0 \geq \int_{B_{{2}}} \Delta (\zeta v^j_i) \Delta v^j_i = 
\int_{B_{{2}}} v^j_i \Delta \zeta  \Delta v^j_i+ \int_{B_{{2}}}\zeta ( \Delta v^j_i )^2+
2 \int_{B_{{2}}} \nabla \zeta \nabla v^j_i \Delta v^j_i,
\end{equation}
and therefore
\begin{equation*}
\begin{aligned}
 \limsup_{j \rightarrow \infty} \int_{B_{{2}}} \zeta (\Delta v^j_i )^2 \leq 
 -\lim_{j \rightarrow \infty}\int_{B_{{2}}} v^j_i \Delta \zeta  \Delta v^j_i 
 -2 \lim_{j \rightarrow \infty}\int_{B_{{2}}} \nabla \zeta \nabla v^j_i \Delta v^j_i \\
 = -\int_{B_{{2}}} v^0_i \Delta \zeta  \Delta v^0_i-
2 \int_{B_{{2}}} \nabla \zeta \nabla v^0_i \Delta v^0_i,
 \end{aligned}
\end{equation*}
where we used that $v^j_i \rightarrow v^0_i  \textrm{ in } W^{1,2}(B_2)$ and 
$\Delta v^j_i \rightharpoonup \Delta v^0_i \textrm{     in    } L^2(B_2)$.

\par
From the last inequality and \eqref{limvi0} we may conclude that 
\begin{equation}\label{lim1}
  \limsup_{j \rightarrow \infty} \int_{B_{{2}}} \zeta (\Delta v^j_i )^2 \leq 
  \int_{B_{{2}}} \zeta (\Delta v^0_i )^2.
\end{equation}

\par
On the other hand
\begin{equation} \label{lim2}
  \liminf_{j \rightarrow \infty} \int_{B_{{2}}} \zeta (\Delta v^j_i )^2 \geq 
  \int_{B_{{2}}} \zeta (\Delta v^0_i )^2
\end{equation}
follows from the weak convergence $ \Delta v^j_i \rightharpoonup \Delta v^0_i$ in $ L^2(B_2)$,
and we 
 may conclude from \eqref{lim1} and \eqref{lim2} that
\begin{equation*}
  \lim_{j \rightarrow \infty} \int_{B_{{2}}} \zeta (\Delta v^j_i )^2 =
  \int_{B_{{2}}} \zeta (\Delta v^0_i )^2.
\end{equation*}
Hence we obtain
$ \Arrowvert \Delta v^j_i - \Delta v^0_i \rVert_{L^{2}(B_R)} \rightarrow 0 $, and 
therefore
$ v^j_i \rightarrow v^0_i $ in $W^{2,2}_{loc}(B_2)$ according to the Calder\'{o}n-Zygmund inequality.
\end{proof}

\subsection{Properties of solutions in a normalised coordinate system}

Let us define
 \begin{equation} \label{resc}
  u_{r,x_0}(x):=\frac{u(rx+x_0)}{r^3}, \textrm{ for } x_0 \in \Gamma_u, ~x \in B_2, ~  r \in (0,1),
 \end{equation}
and $ u_r := u_{r, 0}$.
We would like to know how fast $\Arrowvert \nabla'  u_r \rVert_{W^{2,2}(B_2)}$
 decays with respect to  $\Arrowvert \nabla' u \rVert_{W^{2,2}(B_2)}$, for $r<1$. 
 In particular, the inequality
 \begin{equation} \label{good}
 \Arrowvert \nabla'  u_s \rVert_{W^{2,2}(B_2)} \leq \tau  \Arrowvert \nabla'  u\rVert_{W^{2,2}(B_2)} ,
 \end{equation}
for some $0<s,\tau<1$ would provide good decay estimates for 
$ \Arrowvert \nabla' u_{s^ k }\rVert_{W^{2,2}(B_2)}$, $k\in \mathbb{N}$. 

\par
We show that the inequality \eqref{good} holds in a special coordinate system depending on the solution $u$ and parameter $s>0$. Then  iterating the inequality \eqref{good} and  the coordinate system we obtain  the existence of the unit normal vector to the free boundary at the origin.

\par
Let us observe that 
 $\frac{1}{6}(\eta \cdot x)_+^3 \in \mathscr{B}_\kappa^\varrho(\varepsilon)$ if
$ \lvert \eta - e_n \rvert \leq C_n \varepsilon$, for some dimensional constant $C_n$.

\begin{definition} \label{d2}
 Let $u$ be the solution to the biharmonic obstacle problem. We say that the coordinate system is
 normalised with respect to $u $, if 
 \begin{align*}
  \inf_{\eta \in \mathbb{ R }^n,\lvert \eta \rvert=1 }
  \left\| \nabla'_\eta   \left( u(x)-\frac{1}{6}(\eta \cdot x)_{+}^3 \right) \right\| _{L^{2}(B_{2})} \\
  = \left\| \nabla'_{e_n}  \left( u(x)- \frac{1}{6}( x_n)_{+}^3 \right)\right\| _{L^{2}(B_{2})},
 \end{align*}
where $\nabla'_\eta:= \nabla - (\eta \cdot \nabla)\eta $, and $\nabla':=\nabla'_{e_n}$.
\end{definition}

\par
A minimiser $\eta $ always exists for a function
$ u\in \mathscr{B}_\kappa^\varrho(\varepsilon)$, and
since $ \nabla'_{-\eta}= \nabla'_{\eta}$, $-\eta $ is also a minimiser, thus
we always choose a minimiser satisfying the condition $ e_n\cdot \eta \geq 0$.
A normalised coordinate system always exists by choosing $\eta = e_n $ in the new coordinate system.

\begin{lemma} \label{integralDeltavi0}
Let $u$ be the solution to the biharmonic obstacle problem in  a normalised coordinate system with respect to $u $.
Then
\begin{equation} \label{integralDeltauij}
\int_{B_2} \frac{\partial  u}{\partial x_i} \frac{\partial u}{\partial x_n}dx=0, \textrm{ for all } 1 \leq i\leq n-1.
\end{equation}
 \end{lemma}

\begin{proof}
Let us observe that for every $\eta \in \mathbb{R}^n$,
\begin{equation*}
 \nabla'_\eta   \left( u(x)-\frac{1}{6}(\eta \cdot x)_{+}^3 \right) = \nabla'_\eta   u(x)
 \end{equation*}
 and 
 \begin{equation*}
 \left\| \nabla'_\eta   u\right\| _{L^{2}(B_{2})}^2 =
\left\| \nabla  u\right\| _{L^{2}(B_{2})}^2 - 
\left\| \eta \cdot \nabla   u\right\| _{L^{2}(B_{2})}^2 .
\end{equation*}

For any fixed $ 1 \leq i\leq n-1$,  and real number $-1<t <1$, let $\eta(t):= t e_i +\sqrt{1-t^2} e_n$.  By the definition of a normalised coordinate system, 
the function $ \varphi(t):=\left\| \eta \cdot \nabla  u\right\| _{L^{2}(B_{2})}^2  $, $ t\in (-1,1)$ has a local maximum at the point 
$ t=0$. Hence 
\begin{equation}
\varphi^\prime (0)=2\int_{B_2} \frac{\partial u}{\partial x_i} \frac{\partial u}{\partial x_n}dx=0,
\end{equation}
which implies \eqref{integralDeltauij}.
\end{proof}

\begin{lemma} \label{l6}
Assume that $u\in \mathscr{B}_\kappa^\varrho(\varepsilon) $ solves the biharmonic obstacle problem in a fixed coordinate system with basis vectors 
$\{ e_1,..., e_n\}$.
Let $\{ e_1^1,...,e^1_n \}$ be a normalised coordinate system with respect to $u$, and assume that 
$ e_n^1 \cdot e_n \geq 0 $. Then
\begin{equation*}
 \lvert e_n-e_n^1\rvert \leq C(n) \Arrowvert \nabla' u \rVert_{L^2(B_2)}\leq C(n) \varepsilon,
\end{equation*}
 if $\varepsilon$ is small, where $C(n) >0$ is a dimensional constant. 
\end{lemma}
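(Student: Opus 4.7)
The plan is to read the normalization condition in Definition \ref{d2} as a variational problem on the unit sphere and then combine the optimality of $e_n^1$ with the $W^{3,2}$-closeness $u\approx \tfrac16(x_n)_+^3$ supplied by Lemma \ref{l4}. Using the identity
$$\|\nabla'_\eta \Delta u\|_{L^2(B_1)}^2=\|\nabla\Delta u\|_{L^2(B_1)}^2-\|\eta\cdot\nabla\Delta u\|_{L^2(B_1)}^2$$
noted immediately after Definition \ref{d2}, the normalization is equivalent to saying that $e_n^1$ maximizes $\eta\mapsto\|\eta\cdot\nabla\Delta u\|_{L^2(B_1)}^2$ over unit vectors. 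Testing the maximum against $\eta=e_n$ gives the key inequality
$$\|e_n^1\cdot\nabla\Delta u\|_{L^2(B_1)}^2\geq\|\partial_n\Delta u\|_{L^2(B_1)}^2.\qquad(\star)$$

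Next I would decompose $e_n^1=\alpha e_n+\beta w$ with $w\in e_n^\perp$, $|w|=1$, $\alpha\geq 0$ and $\alpha^2+\beta^2=1$. Since $w\cdot e_n=0$ we have $w\cdot\nabla\Delta u=w\cdot\nabla'\Delta u$, so
$$e_n^1\cdot\nabla\Delta u=\alpha\,\partial_n\Delta u+\beta\,(w\cdot\nabla'\Delta u).$$
Squaring, integrating, and using Cauchy--Schwarz together with $\|w\cdot\nabla'\Delta u\|_{L^2(B_1)}\leq\mu$, where $\mu:=\|\nabla'\Delta u\|_{L^2(B_1)}$, I obtain
$$\|e_n^1\cdot\nabla\Delta u\|_{L^2(B_1)}^2\leq\alpha^2\|\partial_n\Delta u\|_{L^2(B_1)}^2+2|\alpha\beta|\,\|\partial_n\Delta u\|_{L^2(B_1)}\,\mu+\beta^2\mu^2.$$
Combined with $(\star)$ this collapses (after cancelling $\alpha^2\|\partial_n\Delta u\|^2$ via $1-\alpha^2=\beta^2$) to
$$\beta^2\bigl(\|\partial_n\Delta u\|_{L^2(B_1)}^2-\mu^2\bigr)\leq 2|\alpha\beta|\,\|\partial_n\Delta u\|_{L^2(B_1)}\,\mu.$$

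The only substantive step is now to control the prefactor from below by a dimensional constant. This is where Lemma \ref{l4} intervenes: $\|u-\tfrac16(x_n)_+^3\|_{W^{3,2}(B_1)}\leq\sigma(\varepsilon)$ forces $\|\partial_n\Delta u\|_{L^2(B_1)}^2\geq\tfrac12|B_1|-\sigma'(\varepsilon)$, while $\mu\leq C(n)\varepsilon$ by Definition \ref{d1}(\ref{itm:first}). Hence for $\varepsilon$ small the prefactor exceeds $\tfrac14|B_1|$, yielding $\beta\leq C(n)\mu$. Finally, $(1-\alpha)(1+\alpha)=\beta^2$ and $\alpha\geq 0$ give $1-\alpha\leq\beta^2$, so
$$|e_n-e_n^1|^2=(1-\alpha)^2+\beta^2\leq 2\beta^2\leq C(n)\mu^2,$$
which is the claimed bound; the second inequality $\mu\leq C(n)\varepsilon$ follows directly from Definition \ref{d1}(\ref{itm:first}) via $\|\nabla'\Delta u\|_{L^2(B_1)}\leq\|\Delta\nabla' u\|_{L^2(B_2)}\leq\|\nabla' u\|_{W^{2,2}(B_2)}$. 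The only obstacle is the coercivity of $\|\partial_n\Delta u\|^2-\mu^2$, and Lemma \ref{l4} disposes of it cleanly.
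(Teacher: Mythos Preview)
Your proof is correct. Both your argument and the paper's rest on the same two ingredients: the optimality of $e_n^1$ in Definition~\ref{d2} and the lower bound on $\|\partial_n\Delta u\|_{L^2(B_1)}$ coming from Lemma~\ref{l4}. The difference is in the packaging. The paper works directly with the inequality $\|\nabla'_{e_n^1}\Delta u\|_{L^2(B_1)}\leq\|\nabla'\Delta u\|_{L^2(B_1)}$ and uses two separate chains of triangle inequalities, one to bound $1-(e_n\cdot e_n^1)^2$ and another to bound $|(e_n^1)'|$, then adds the two pieces. You instead recast the normalization as the maximization of $\eta\mapsto\|\eta\cdot\nabla\Delta u\|^2$, decompose $e_n^1=\alpha e_n+\beta w$, and extract a single quadratic inequality for $\beta$. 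This is cleaner: you get $|\beta|\leq C(n)\mu$ in one stroke, and your observation $1-\alpha\leq\beta^2$ then gives the full bound without a second estimate. The paper's route is slightly more hands-on but arrives at the same place; your formulation makes the underlying linear-algebra structure (largest singular direction of the bilinear form $\int\partial_i\Delta u\,\partial_j\Delta u$) more transparent.
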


\begin{proof}

According to Definition \ref{d2},
\begin{equation} \label{en1}
 \Arrowvert \nabla'_{e_n^1}   u \rVert_{L^{2}(B_2)}=
 \Arrowvert \nabla  u -e_n^1 ( e_n^1 \cdot \nabla  u)\rVert_{L^2(B_2)}\leq 
 \Arrowvert \nabla'  u \rVert_{L^2(B_2)} .
 \end{equation}
 It follows from the triangle inequality that
 \begin{equation} \label{big}
\begin{aligned}
 \left\| \frac{\partial u}{\partial x_n}-
 (e_n \cdot e_n^1)^2 \frac{\partial  u}{\partial x_n} \right\| _{L^2(B_2)} \leq 
 \left\| \frac{\partial u}{\partial x_n}-
 (e_n \cdot e_n^1) ( e_n^1 \cdot \nabla  u) \right\| _{L^2(B_2)} \\
 +\left\| (e_n \cdot e_n^1 ) ( e_n^1 \cdot \nabla  u) - 
 (e_n \cdot e_n^1 )^2
 \frac{\partial u}{\partial x_n} \right\| _{L^2(B_2)} \\
 \leq \Arrowvert \nabla_{e_n^1}' u \rVert_{L^2(B_2)}+  ( e_n\cdot e_n^1 )
 \Arrowvert e_n^1 \cdot \nabla' u \rVert_{L^2(B_2)} \leq 
2 \Arrowvert \nabla' u \rVert_{L^2(B_2)},
 \end{aligned}
 \end{equation}
 according to \eqref{en1}, and taking into account that $ 0 \leq   e_n \cdot e_n^1  \leq 1$.
 
  \par
Note that Lemma \ref{l4} implies that
$ \left\| \frac{\partial  u }{\partial x_n} \right\| _{L^2(B_2)} \approx \left\| \frac{x_n^2 }{2} \right\| _{L^2(B_2^+)} $ is uniformly bounded
from below by a dimensional constant if  $ \varepsilon>0$ is small.  We may conclude from \eqref{big} that 
\begin{equation*}
  1-(e_n \cdot e_n^1)^2 \leq 
  \frac{2 \Arrowvert \nabla'  u \rVert_{L^2(B_2)} }{
  \left\| \frac{\partial  u }{\partial x_n} \right\|_{L^2(B_2)} } 
  \leq C(n)  \Arrowvert \nabla'  u \rVert_{L^2(B_2)}.
\end{equation*}
Since $ 0 \leq e_n \cdot e_n^1 \leq 1 $, we get 
\begin{equation} \label{enen1}
 0\leq 1-e_n \cdot e_n^1 \leq 1-(e_n \cdot e_n^1)^2 \leq C(n)  \Arrowvert \nabla' u \rVert_{L^2(B_2)}.
\end{equation}

 \par
 Denote by $ (e_n^1)':= e_n^1- e_n (e_n \cdot e_n^1)$.  
It follows from the triangle inequality and \eqref{en1} that
\begin{equation*}
\begin{aligned}
 \Arrowvert (e_n^1)' (e_n^1 \cdot \nabla  u) \rVert_{L^2(B_2)} \leq
  \Arrowvert \nabla'  u -(e_n^1)' (e_n^1 \cdot \nabla u) \rVert_{L^2(B_2)} \\
 + \Arrowvert \nabla'  u \rVert_{L^2(B_2)} \leq 
 \Arrowvert \nabla'_{e_n^1}  u \rVert_{L^2(B_2)}+\Arrowvert \nabla'  u \rVert_{L^2(B_2)} \leq
 2  \Arrowvert \nabla'  u \rVert_{L^2(B_2)} .
  \end{aligned}
\end{equation*}
Hence 
\begin{equation*}
 \lvert (e_n^1)'\rvert \leq \frac{2 \Arrowvert \nabla'  u \rVert_{L^2(B_2)} }{
  \left\| e_n^1 \cdot \nabla  u  \right\|_{L^2(B_2)} }.
\end{equation*}
Let us choose  $ \varepsilon> 0$ small, then  $  \left\| e_n^1 \cdot \nabla  u  \right\|_{L^2(B_2)}$ is bounded from below 
by a dimensional constant according to
Lemma \ref{l4} and inequality \eqref{enen1}. Therefore we obtain
\begin{equation} \label{enn}
 \lvert (e_n^1)'\rvert 
  \leq C(n)  \Arrowvert \nabla'  u \rVert_{L^2(B_2)}.
\end{equation}

 \par
 Note that 
 \begin{equation*}
 \lvert e_n-e_n^1\rvert \leq  \lvert 1-e_n\cdot e_n^1 \rvert + \lvert (e_n^1)'\rvert.
\end{equation*}
Applying inequalities \eqref{enen1} and \eqref{enn} we obtain the desired inequality,
\begin{equation*}
 \lvert e_n-e_n^1\rvert  \leq 
 C(n) \Arrowvert \nabla'  u \rVert_{L^2(B_2)} \leq C(n)  \varepsilon,
\end{equation*}
and the proof of the lemma is now complete.

\end{proof}

\par
Lemma \ref{l6} provides an essential estimate, which will be useful in our later discussion.
Next we state another supporting lemma,  the proof of which is quite standard, but we include it for our convenience.
\begin{lemma} \label{estimateDeltav0}
\begin{enumerate}
  \item Let $ v $ be a biharmonic function in the ball $B_2$, then
\begin{equation}\label{esimateDeltavi0}
 \Arrowvert \Delta v \rVert_{L^2(B_1)} \leq C_n \Arrowvert  v \rVert_{L^2(B_{2})}.
\end{equation}
  \item If  $ v $ is a biharmonic function in the half-ball $B_2^+$, such that $v=|\nabla v|=0$ on $ \{ x_n=0\} \cap B_2$, then
\begin{equation}\label{esimateDeltavi0halfball}
 \Arrowvert \Delta v \rVert_{L^2(B_1^+)} \leq C_n \Arrowvert  v \rVert_{L^2(B^+_{2})}.
\end{equation}
\end{enumerate}
\end{lemma}
\begin{proof}
 Throughout   $ \zeta \in C_0^\infty(B_{2})$ is a fixed function, such that  $  \zeta \equiv 1$ in 
$B_1$, $ 0 \leq \zeta \leq 1$ in $B_{2}$. 

\par
1. If  $v$ is a biharmonic function in $B_2$, then 
\begin{align*} 
  0= \int_{B_{{2}}} \Delta v \Delta (\zeta^4 v) = \int_{B_{{2}}} 2 v(  \Delta \zeta^2+4|\nabla \zeta|^2)\zeta^2 \Delta v \\
  + \int_{B_{{2}}}\zeta^4 ( \Delta v )^2+
8 \int_{B_{{2}}} \zeta \nabla \zeta \nabla v \zeta^2 \Delta v.
\end{align*}
Hence by Cauchy's inequality,
\begin{equation} \label{3estimate1}
\begin{aligned}
 \int_{B_{{2}}}\zeta^4 ( \Delta v )^2=-  \int_{B_{{2}}} 2 v(  \Delta \zeta^2+4|\nabla \zeta|^2)\zeta^2 \Delta v
 -  8 \int_{B_{{2}}} \zeta \nabla \zeta \nabla v \zeta^2 \Delta v \\
 \leq 
 \frac{1}{4}\int_{B_{{2}}} \zeta^4 ( \Delta v)^2 +4 \int_{B_{{2}}} v^2 (\Delta \zeta^2+4|\nabla \zeta|^2 )^2
 + \frac{1}{4}\int_{B_{{2}}}\zeta^4 (\Delta v )^2 \\
 + 64 \int_{B_{{2}}} (\zeta \nabla \zeta \nabla v)^2
 \leq \frac{1}{2}\int_{B_{{2}}} \zeta^4 ( \Delta v)^2 + C_n  \Arrowvert  v \rVert^2_{L^2(B_{2})}  +
 C_n   \Arrowvert \zeta \nabla v \rVert^2_{L^2(B_{2})}  .
  \end{aligned}
  \end{equation}
On the other hand, 
\begin{equation*}
\begin{aligned}
  \int_{B_{2}} \zeta^2 | \nabla v |^2 =  \int_{B_{2}} \nabla (\zeta^2 v) \nabla v  -2 \int_{B_{2}} \zeta v \nabla \zeta \nabla v=\\
  - \int_{B_{2}} \zeta^2 v  \Delta v -2 \int_{B_{2}} \zeta v \nabla \zeta \nabla v   \leq
   \frac{1}{8C_n}\int_{B_2}\zeta^4 (\Delta v )^2+ 2C_n \int_{B_{{2}}} v ^2 \\
   +\frac{1}{2}  \int_{B_{2}} \zeta^2 | \nabla v |^2+2   \int_{B_{2}} | \nabla \zeta|^2  v ^2,
         \end{aligned}
    \end{equation*}
and therefore
\begin{equation}\label{3estimate2}
 C_n   \Arrowvert \zeta \nabla v \rVert^2_{L^2(B_2)} \leq \frac{1}{4} \int_{B_2}\zeta^4 ( \Delta v )^2+ \tilde{C}_n \int_{B_2} v ^2
  \end{equation}
Combining estimates \eqref{3estimate1} and \eqref{3estimate2}, we obtain
\begin{align*}
 \int_{B_2} \zeta^4 ( \Delta v )^2 \leq  \frac{3}{4} \int_{B_2} \zeta^4 ( \Delta v )^2 + \bar{C}_n \Arrowvert  v \rVert^2_{L^2(B_2)},
  \end{align*}
which implies \eqref{esimateDeltavi0}.

\par 
2. In order to prove the second part of the lemma, it is enough to observe that
\begin{equation*}
  \int_{B_2^+} \Delta v \Delta (\zeta^4 v) =0,
\end{equation*}
since $ \zeta^4 v\in W^{2,2}_0(B_2^+)$. The rest of the proof follows as in the first part.
\end{proof}

\begin{proposition} \label{p3}
For any  small number  $0 < s<2^{-n-4}e^{-1}n^{-2}$,  there exists $\varepsilon_0=\varepsilon_0(s)>0 $ small, such that if $ \varepsilon<\varepsilon_0$, then 
for any $ u \in \mathscr{B}_\kappa^\varrho(\varepsilon)$
 \begin{equation} \label{boundedrescalings}
  \left\| \nabla'  u_{2s} \right\| _{L^{2}(B_2)} \leq 
  C_n \left\| \nabla' u \right\|_{W^{2,2}(B_2)},
 \end{equation}
 where $C_n$ is a dimensional constant, not depending on $s$.
Furthermore, if the coordinate system is normalised  with respect to 
$u_{2s}$, then 
 \begin{equation} \label{tau}
  \left\| \nabla'  u_s \right\| _{W^{2,2}(B_2)} \leq 
  \tau  \left\| \nabla' u \right\|_{W^{2,2}(B_2)},
 \end{equation}
where $1>\tau >\Lambda_n s$ is a fixed number and $\Lambda_n$ is a dimensional constant to be specified.
\end{proposition}

\begin{proof} 
The proof of inequalities \eqref{boundedrescalings} and \eqref{tau} follows the exact same procedure, so we will mainly focus on the 
proof of the second one, since it is the core of the linearisation argument.

\par 
We want to show that  the inequality \eqref{tau} holds in a normalised coordinate system with respect to $u_{2s}$.
By the Cauchy-Schwarz inequality, it is enough to show that the inequality
\begin{equation*}
  \left\| \frac{ \partial  u_s }{\partial x_i}\right\| _{W^{2,2}(B_2)} \leq 
\tau  \left\|  \frac{ \partial u}{\partial x_i} \right\|_{W^{2,2}(B_2)}
 \end{equation*}
holds for any $i\in \{1,...,n-1\}$, provided $\varepsilon$ is small enough.
We argue by contradiction.  Assume that  there exists small numbers $0<s<2^{-n-4}e^{-1}n^{-2}$, $ \Lambda_n s<\tau <1$
and a sequence of solutions 
$ \{ u^j \} \subset \mathscr{B}_\kappa^\varrho(\varepsilon_j)$,  in a
 coordinate system normalised with respect to $u^j_{2s}$, such that $ \varepsilon_j \rightarrow 0$, as $j\rightarrow\infty$, but
 for some $i \in \{ 1,2,...,n-1 \}$
\begin{equation} \label{conts}
  \left\| \frac{ \partial  u^j_{s} }{\partial x_i}\right\| _{W^{2,2}(B_2)} >
\tau \left\|  \frac{\partial  u^j}{\partial x_i} \right\|_{W^{2,2}(B_2)}.
 \end{equation}

\par
Let $v^j_i$ be given by \eqref{vij}, then according to Lemma \ref{l5}, $ v^j_i\rightarrow v^0_i $ in 
$W^{2,2}_{loc}(B_2)$, where $v^0_i$ is a 
  biharmonic function in the half-ball $ \{ x_n >  0\} \cap B_2 $, satisfying $v^0_i=|\nabla v^0_i|=0$ on $\{x_n=0\}\cap B_2$. Inequality \eqref{conts} implies that 
  \begin{equation} \label{contsv}
  \left\| \frac{v_i^0(s\cdot)}{s^2}\right\| _{W^{2,2}(B_2)} \geq \tau.
 \end{equation}
  Lemma \ref{estimateDeltav0}, part 2.  and the Calder\'on-Zygmund inequality imply that 
  \begin{equation*}
\left\| \frac{v_i^0(2s\cdot)}{4s^2}\right\| _{W^{2,2}(B_1)} \leq C_n \left\| \frac{v_i^0(2s\cdot)}{4s^2}\right\| _{L^{2}(B_2)},
\end{equation*}
 hence
  \begin{equation} \label{contsv22}
 \left\| \frac{v_i^0(2s\cdot)}{4s^2}\right\| _{L^{2}(B_2)} \geq \frac{1}{C_n} \left\| \frac{v_i^0(2s\cdot)}{4s^2}\right\| _{W^{2,2}(B_1)} \geq 
 C_n  \left\| \frac{v_i^0(s\cdot)}{s^2}\right\| _{W^{2,2}(B_2)}\geq C_n \tau,
 \end{equation}
where $ C_n$ represents a general dimensional constant, and it does not depend neither on the function $v_i^0$ nor on the parameter $s$. 
We will derive a contradiction to \eqref{contsv}, if we show that $  \left\| \frac{v_i^0(2s\cdot)}{4s^2}\right\| _{L^{2}(B_2)}$ can be made arbitrarily small by choosing $ s>0$ small
initially.

\par
 Since $v^0_i$ is a 
  biharmonic function in the half-ball $ \{ x_n >  0\} \cap B_2 $ and $v^0_i=|\nabla v^0_i|= 0$
on $  \{ x_n = 0\} \cap B_2$,
we can apply the reflection principle for biharmonic functions, and extend $ v^0_i$ to a biharmonic function in the ball $B_2$, see for instance
\cite{Duffin} or \cite{Huber}. Let $\bar{v}^0_i$ denote the extended function given by Duffin's formula
\begin{equation} \label{Duffin}
\bar{v}^0_i(x',-x_n)=-v^0_i(x',x_n)+2x_n\frac{\partial v^0_i}{\partial{x_n}}(x',x_n)-x_n^2\Delta v^0_i(x',x_n), ~x_n >0.
\end{equation}
The formula \eqref{Duffin} implies that 
\begin{equation}\label{bddextension}
 \Arrowvert  \bar{v}^0_i \rVert_{L^2(B_{2})} \leq c_n \Arrowvert v^0_i \rVert_{W^{2,2}(B_{2})},
 \end{equation}
 where $c_n>0$ is yet another dimensional constant.

 \par
The function $ \bar{v}^0_i$ is biharmonic in the ball $B_2$, therefore analytic and it may be written as a Taylor series
 \begin{equation} \label{Taylor}
 \bar{v}^0_i(x)=  \sum_{|\alpha|=0}^\infty \frac{D^\alpha \bar{v}^0_i(0) }{\alpha!} x^\alpha=\sum_{k=0}^\infty b_k(x), \end{equation}
 where $ \alpha$ is a multiindex, and $ b_k$ is a homogeneous degree $k$ biharmonic 
polynomial. It follows from boundary conditions for the function $v^0_i$ on $ \{x_n=0\}$ that 
\begin{equation}
b_0=b_1\equiv 0, \textrm{ and }b_2(x)= \frac{\partial^2  \bar{v}^0_i(0)}{\partial x_n^2} \frac{x_n^2}{2}.
\end{equation}

\par
Lemma \ref{l4} implies that 
$ \frac{\partial  u^j}{\partial x_n} \rightarrow \frac{1}{2}(x_n^+)^2 $ in ${L^2(B_2)}$, and
according to Lemma \ref{l5},
$    \frac{v^j_i (2sx)}{4s^2} \rightarrow   \frac{v^0_i(2sx)}{4s^2}$ in ${W^{2,2}(B_2)}$ as
$j\rightarrow \infty$, and $ v^0_i= 0$ in $ B_2 \setminus B_2^+$.
By Lemma \ref{integralDeltavi0},
\begin{equation*}
\int_{B_2} v^j_i (2sx)\frac{\partial  u^j}{\partial x_n} (2sx)dx=\frac{1}{\delta^j_i}\int_{B_2} \frac{\partial  u^j} {\partial x_i }(2sx)\frac{\partial  u^j}{\partial x_n}(2sx)dx =0,
\end{equation*}
and after passing to the limit as $ j\rightarrow \infty$, we obtain that 
\begin{equation} \label{ai0ai1}
\int_{B_2^+}  v^0_i (2sx)x_n^2dx =0.
\end{equation}
Note that \eqref{ai0ai1} implies that 
\begin{equation*} 
 \left\|  \frac{v^0_i(2s\cdot)}{4s^2}-b_2 \right\|^2_{L^2(B_2^+)}
=  \left\| \frac{v^0_i(2s\cdot)}{4s^2} \right\|^2_{L^2(B_2^+)} +   \left\| b_2  \right\|^2_{L^2(B_2^+)} ,
\end{equation*}
hence 
\begin{equation}\label{nice}
 \left\|  \frac{v^0_i(2s\cdot)}{4s^2}  \right\|_{L^2(B_2^+)} \leq  \left\|  \frac{v^0_i(2s\cdot)}{4s^2}-b_2 \right\|_{L^2(B_2^+)}.
  \end{equation}
Next we show that $ \left\| v^0_i(2s\cdot)-4s^2b_2  \right\|^2_{L^2(B_2^+)}$ is of order $s^3$. By the triangle inequality,
\begin{equation}\label{longestimate}
\begin{aligned}
\left\|  \frac{v^0_i(2s\cdot)}{4s^2}-b_2 \right\|_{L^2(B_2^+)}=\left\| \sum_{k=3}^\infty (2s)^{-2} b_k (2s\cdot)  \right\|_{L^2(B_2^+)} \\
\leq \sum_{k=3}^\infty \left\| (2s)^{-2} b_k (2s\cdot)  \right\|_{L^2(B_2^+)}
=\sum_{k=3}^\infty (2s)^{k-2}  \left\| b_k \right\|_{L^2(B_2^+)}.
\end{aligned}
\end{equation}
Now it is time to refer to the estimates on derivatives for biharmonic functions (see Appendix A),
\begin{equation}
\begin{aligned}
b_k(x)=\sum_{|\alpha|=k}\frac{ D^\alpha  \bar{v}^i_0(0)}{\alpha!} x^\alpha, \textrm{ and }\\
|D^\alpha  \bar{v}^i_0(0) |\leq \frac{(2^{n+1}nk)^k}{r^{n+k}} 
\left( \left\|  \bar{v}^0_i  \right\|_{L^1(B_{r})}+\frac{r^2}{2(n+2)}
\left\| \Delta \bar{v}^0_i  \right\|_{L^1(B_{r})} \right).
\end{aligned}
\end{equation}
Hence
\begin{align*}
 \left\| b_k  \right\|_{L^2(B_2^+)} \leq \sum_{|\alpha|=k}\frac{(2^{n+1}nk)^k 2^{n+k}  }{|B_1|^\frac{1}{2}\alpha! } 
 \left( \left\| \bar{v}^0_i  \right\|_{L^1(B_{1})}  +\frac{1}{2(n+2)} \left\| \Delta \bar{v}^0_i  \right\|_{L^1(B_{1})}    \right) \\
\overset{\eqref{esimateDeltavi0}}{\leq} C_n {(2^{n+2}nk)^k } \sum_{|\alpha|=k}\frac{1}{\alpha! }  \left\|  \bar{v}^0_i  \right\|_{L^2(B_{2})} =
C_n \frac{(2^{n+2}nk)^k n^k }{ k!} \left\|  \bar{v}^0_i  \right\|_{L^2(B_{2})} \\
\leq  C_n{2^{k(n+2)}n^{2k} e^k } \left\| \bar{v}^0_i  \right\|_{L^2(B_{2})}  ,
   \end{align*}
 where we used Stirling's inequality in the last step.
 
 \par
Let  $ \lambda:= 2^{n+2} e n^{2}$ be  a fixed number, then by \eqref{longestimate},
\begin{equation} \label{As}
\begin{aligned}
\left\| \frac{v^0_i(2s\cdot)}{4s^2}-b_2 \right\|_{L^2(B_2^+)} \leq 
C_n s^{-2}\sum_{k=3}^\infty (2s)^{k} \lambda^{k} {\left\|  \bar{v}^0_i  \right\|_{L^2(B_{2})}} \\
=  C_n \frac{\lambda^3 s}{1-2\lambda s}\left\| \bar{v}^0_i  \right\|_{L^2(B_{2})}
\leq \tilde{C}_n s\left\| \bar{v}^0_i  \right\|_{L^2(B_{2})},
\end{aligned}
\end{equation}
where by assumption $2s \lambda<1/2$.

\par
Finally, combining the inequalities  \eqref{bddextension} ,  \eqref{nice} and \eqref{As}, we obtain
\begin{equation*}
 \left\| \frac{v^0_i(2s\cdot)}{4s^2}\right\| _{L^2(B_2)} \leq  c_n  \tilde{C}_n s \lVert v^0_i\rVert_{W^{2,2}(B_{2})} \leq A_n s,
\end{equation*}
where $A_n >0$ is a dimensional constant, and $s$ is fixed small number, $sA_n<1$. Let $\Lambda_n := A_n /C_n$, where $C_n $
is the dimensional constant in \eqref{contsv22}. Recalling that  $1>\tau >s\Lambda_n$, we derive a contradiction to \eqref{contsv22}.

\par 
The proof of the inequality \eqref{boundedrescalings} is very similar. Any biharmonic function $v$ in the half ball $B_2^+$, satisfying the boundary conditions $ v=|\nabla v|=0$ on $ \{x_n=0\}$ can be written as
\eqref{Taylor}.
Employing the estimates of derivatives of biharmonic functions, we can show that $  \left\| \frac{v(s\cdot)}{s^2}\right\| _{L^2(B_2)}$ is bounded by a dimensional constant if $0< s<2^{-n-2} e^{-1}n^{-2}$, 
and \eqref{boundedrescalings} follows.

\end{proof}

\subsection{$C^{1,\alpha}$-regularity of the free boundary}

In this section we perform an iteration argument, based on  Proposition \ref{p3}
and Lemma \ref{l6}, that leads to the existence of the unit normal $ \eta_0$ of the free boundary at the origin, 
and provides good decay estimates for $ \Arrowvert \nabla'_{\eta_0} u_r \rVert_{W^{2,2}(B_2)}$.

\par
First we would like to verify that $ u \in \mathscr{B}_\kappa^\varrho( \varepsilon)$ imply that 
$ u_s \in \mathscr{B}^\varrho_\kappa({ \varepsilon})$. It is easy to check that the property of being an 
NTA domain is scaling invariant, in the sense that 
if $ D $ is an NTA domain and $ 0\in \partial D $, then for any $0<s<1$ the set 
$D_s := s^{-1}(D \cap B_s)$ is also an NTA domain with the same parameters as $D$.

\par
Assumption \ref{itm:third} in Definition \ref{d1} holds for $ u_s$ according to Corollary \ref{c2}. Indeed,
 let $ t=s$ in Corollary \ref{c2}, then $ u(sx)=0$ if $ x_n < -1$ .

\par
Thus $u_s$ satisfies  \ref{itm:second}, \ref{itm:third} in Definition \ref{d1}, but it may not
satisfy \ref{itm:forth}.
Instead we consider rescaled solutions defined as follows
\begin{equation} \label{capitalu}
 U_s(x):=\frac{ \omega_n u_s(x)}{\Arrowvert D^3 u_s \rVert_{L^2(B_1)}},
\end{equation}
then assumption \ref{itm:forth} also holds. Indeed, $ \Arrowvert D^3 U_s\rVert_{L^2(B_1)}= \omega_n$ by definition of
$ U_s$, and
\begin{align*}
  \Arrowvert D^3 U_s\rVert_{L^2(B_2)}=
  \frac{\omega_n\Arrowvert D^3 u_s\rVert_{L^2(B_2)}}{\Arrowvert D^3 u_s\rVert_{L^2(B_1)}}=
  \frac{\omega_n \Arrowvert D^3 u \rVert_{L^2(B_{2s})}}{\Arrowvert D^3 u \rVert_{L^2(B_{s})}}\\
  \leq \omega_n \frac{ \omega_n (2s)^\frac{n}{2} + \sigma(\varepsilon)}{\omega_n (s)^\frac{n}{2}- \sigma(\varepsilon)}
  < \kappa,
\end{align*}
according to Lemma \ref{l4} provided $\varepsilon=\varepsilon(n, \kappa, s)$ is small.

 \par
 In the next lemma we show that $ U_s \in \mathscr{B}_\kappa^\varrho ({\gamma \varepsilon}) $ in a normalised coordinate system,
 then we argue inductively to show that $ U_{s^k} \in \mathscr{B}_\kappa^\varrho({\gamma^k \varepsilon})$, $\gamma <1$.

\begin{lemma} \label{l7}
 Assume that $ u \in \mathscr{B}^\varrho_{\kappa}(\varepsilon) $ solves the biharmonic obstacle problem in a 
 normalised coordinate system $ \{e_1, e_2,..., e_n\}$. 
Then for any $0<\alpha <1$ there exist $r_0>0$ and a unit vector $\eta_0  \in \mathbb{R}^n $, such that $ \lvert \eta_0 -e_n \rvert \leq C\varepsilon$,
and for any $ 0<r<r_0$
\begin{equation} \label{ralpha}
\frac{\Arrowvert \nabla'_{\eta_0} u_{r}\rVert_{W^{2,2}(B_2)} }{\Arrowvert D^3 
u_{r}\rVert_{L^2(B_1)} }\leq C r^\alpha \varepsilon,
\end{equation}
provided $ \varepsilon=\varepsilon(n,\kappa,\varrho, \alpha) $ is small enough.
The constant $C>0$ depends only on the given parameters.
\end{lemma}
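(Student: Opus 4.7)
The proof iterates a geometric contraction estimate across scales, combining Proposition \ref{p2} (linearization), Proposition \ref{p3} (decay after rescaling), and Lemma \ref{l6} (stability of the normalized direction), applied to the renormalized rescalings $U_{s^k}$ defined in \eqref{capitalu}. The remarks preceding the lemma already guarantee that $U_s$ inherits conditions (ii)--(iv) of Definition \ref{d1} for $\varepsilon$ sufficiently small (depending on $n, \kappa, \varrho, s$), so the entire task is to propagate condition (i) with a factor $\gamma < 1$ at each iteration.

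For the one-step contraction, take $u \in \mathscr{B}^\varrho_\kappa(\varepsilon)$ in a coordinate system normalized for $u$. Proposition \ref{p3} with small $\tau$ and $\delta$ yields
\[
\lVert \nabla'\Delta u_s \rVert_{L^2(B_1)} \leq \tau \lVert \nabla'\Delta u \rVert_{L^2(B_1)} + \delta \lVert \nabla' u \rVert_{W^{2,2}(B_2)}.
\]
Proposition \ref{p2} applied to $u_s$ in its own normalized coordinate system with normal $e_n^1$, together with the minimality inequality $\lVert \nabla'_{e_n^1}\Delta u_s \rVert_{L^2(B_1)} \leq \lVert \nabla'\Delta u_s \rVert_{L^2(B_1)}$, upgrades this $L^2$-decay of $\nabla'\Delta u_s$ to a $W^{2,2}$-decay of $\nabla'_{e_n^1} u_s$. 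Absorbing the scaling factors that relate $B_1$-norms of the rescaled function to $B_2$-norms of the original (first fix $s$, then $\tau, \delta, \delta'$), one arrives at
\[
\lVert \nabla'_{e_n^1} U_s \rVert_{W^{2,2}(B_2)} \leq \gamma \lVert \nabla' u \rVert_{W^{2,2}(B_2)}
\]
for some $\gamma = \gamma(s) < 1$; Lemma \ref{l6} then shows $|e_n^1 - e_n| \leq C \varepsilon$.

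The iteration proceeds by induction on $k$: set $\eta_0 = e_n$, place each $U_{s^k}$ in its own normalized coordinate system with normal $\eta_k$, and apply the one-step estimate to $U_{s^k}$ to obtain $\lVert \nabla'_{\eta_{k+1}} U_{s^{k+1}} \rVert_{W^{2,2}(B_2)} \leq C \gamma^{k+1} \varepsilon$. Lemma \ref{l6} gives $|\eta_{k+1} - \eta_k| \leq C \gamma^k \varepsilon$, so $\{\eta_k\}$ is Cauchy and converges to a unit vector $\eta_0$ satisfying $|\eta_0 - e_n| \leq C \varepsilon$ and $|\eta_0 - \eta_k| \leq C \gamma^k \varepsilon$. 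Unwinding $U_{s^k} = \omega_n u_{s^k}/\lVert D^3 u_{s^k} \rVert_{L^2(B_1)}$, using Lemma \ref{l4} to lower bound $\lVert D^3 u_{s^k} \rVert_{L^2(B_1)}$ by $\omega_n/2$, and exchanging $\nabla'_{\eta_0}$ with $\nabla'_{\eta_k}$ up to an error of size $C|\eta_0 - \eta_k|\,\lVert U_{s^k}\rVert_{W^{3,2}(B_1)}$, one obtains at the dyadic scales $r = s^k$
\[
\frac{\lVert \nabla'_{\eta_0} u_{s^k} \rVert_{W^{2,2}(B_1)}}{\lVert D^3 u_{s^k} \rVert_{L^2(B_1)}} \leq C \gamma^k \varepsilon.
\]
Choosing the parameters so that $\gamma = s^\alpha$ yields $C r^\alpha \varepsilon$ at $r = s^k$; for general $r \in (0,1)$, one compares with the nearest dyadic scale $s^{k+1} \leq r \leq s^k$, the ratio of the scaling factors being uniformly bounded.

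The main obstacle is the one-step estimate: the parameters $\tau, \delta, \delta'$ must be tuned jointly with the fixed scale $s$ so that the final contraction factor is strictly less than one, after accounting for the scaling of $W^{2,2}$-norms between $B_1$ and $B_2$ of the rescaled function. Verifying that $U_{s^k} \in \mathscr{B}^\varrho_\kappa(\cdot)$ persists throughout the iteration --- in particular that $\lVert D^3 U_{s^k} \rVert_{L^2(B_2)} < \kappa$ and that the NTA property and the support condition survive --- requires Lemma \ref{l4}, Corollary \ref{c2}, and the scaling invariance of NTA domains noted in the remarks preceding the lemma.
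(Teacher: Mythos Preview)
Your overall plan matches the paper's: iterate Propositions \ref{p2} and \ref{p3} together with Lemma \ref{l6} on the renormalized rescalings $U_{s^k}$, obtain geometric decay, show the directions $e_n^k$ form a Cauchy sequence converging to $\eta_0$, and interpolate to general radii. The ingredients and the architecture of the iteration are correctly identified, and the verification that $U_{s^k}\in\mathscr{B}^\varrho_\kappa(\cdot)$ persists is handled exactly as you say.

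The gap is in the one-step estimate. You assert that combining Proposition \ref{p3} with Proposition \ref{p2} (applied to $U_s$) yields
\[
\lVert \nabla'_{e_n^1} U_s \rVert_{W^{2,2}(B_2)} \leq \gamma\, \lVert \nabla' u \rVert_{W^{2,2}(B_2)},\qquad \gamma<1.
\]
But Proposition \ref{p2} only controls the $B_1$-norm, and passing back to a $B_2$-norm via the scaling inequality \eqref{2sk} costs a fixed factor $C(\varrho,n)>1$. One is left with
\[
\lVert \nabla' u_s \rVert_{W^{2,2}(B_2)} \;\leq\; C(\varrho,n)\,c(\delta)\,\lVert \nabla' \Delta u \rVert_{L^2(B_1)} \;+\; C(\varrho,n)\,\delta\,\lVert \nabla' u \rVert_{W^{2,2}(B_2)}.
\]
Taking $\delta$ small kills the second term, but then $c(\delta)$ is large and nothing available bounds $\lVert\nabla'\Delta u\rVert_{L^2(B_1)}$ by a small multiple of $\lVert\nabla' u\rVert_{W^{2,2}(B_2)}$: Proposition \ref{p3} gives decay of $\lVert\nabla'\Delta u_s\rVert$ relative to $\lVert\nabla'\Delta u\rVert$, not of $\lVert\nabla'\Delta u\rVert$ relative to $\lVert\nabla' u\rVert_{W^{2,2}}$. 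So a contraction on the single quantity $\lVert\nabla'_{e_n^k} U_{s^k}\rVert_{W^{2,2}(B_2)}$ does not close, regardless of the order in which $s,\tau,\delta,\delta'$ are fixed.

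The paper's remedy is to iterate on a \emph{composite} quantity carrying both norms,
\[
A_k=\frac{\lambda\,\lVert \nabla'_{e_n^{k}}\Delta u_{s^{k}} \rVert_{L^{2}(B_1)}+\lVert \nabla'_{e_n^{k-1}} u_{s^k} \rVert_{W^{2,2}(B_2)}}{\lVert D^3 u_{s^k}\rVert_{L^2(B_1)}},
\]
with the weight chosen as $\lambda=c(\delta)C(\varrho,n)/(\gamma-\tau)$ precisely so that the dangerous term $c(\delta)C(\varrho,n)\lVert\nabla'\Delta u\rVert_{L^2(B_1)}$ is absorbed into $\lambda\gamma\lVert\nabla'\Delta u\rVert_{L^2(B_1)}$ when the two propositions are added. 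This yields $A_{k+1}\leq\beta A_k$ directly; from there your outline (Cauchy sequence of normals, limit $\eta_0$, exchange of $\nabla'_{\eta_0}$ with $\nabla'_{e_n^k}$, interpolation to general $r$ via $\gamma=s^\alpha$) goes through exactly as you describe.
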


\begin{proof}
Throughout $ \{e_1,...,e_n \}$ is a fixed coordinate system normalised with respect to the solution 
$ u\in \mathscr{B}_\kappa^\varrho(\varepsilon)$, and $ \nabla'u= \nabla'_{e_n}u$.
We may renormalise the coordinate system with respect to $U_{2s} $ and denote by $ \{e_1^1,...,e_n^1\}$
the set of basis vectors in the new system. Inductively, $ \{e_1^k,...,e_n^k\}$, $ k\in \mathbb{N}$ is 
a normalised system 
with respect to $U_{2s^k}$, and $e_i^0:=e_i$. According to Lemma \ref{l6},
\begin{equation} \label{renorm}
 \lvert e_n^{k+1}-e_n^{k} \rvert \leq C(n) 
 \frac{\Arrowvert \nabla'_{e_n^{k}}  u_{2s^{k+1}}\rVert_{L^2(B_2)}}{ \Arrowvert D^3 u_{s^k }\rVert_{L^2(B_1)}}     \overset{\eqref{boundedrescalings}}{\leq}
C(n) \frac{\Arrowvert \nabla'_{e_n^{k}}  u_{s^{k}}\rVert_{W^{2,2}(B_2)}}{ \Arrowvert D^3 u_{s^k }\rVert_{L^2(B_1)}}   ,
 \end{equation}
provided $ \Arrowvert \nabla'_{e_n^{k}}  U_{s^k} \rVert_{W^{2,2}(B_2)}$ is sufficiently small.
 
\par
In the following discussion $ 0 <s<\tau<1$ are small fixed numbers, satisfying the assumptions in Proposition \ref{p3}.

\par
Now let us consider the sequence of numbers $\{A_k\}_{k\in \mathbb{N}_0}$,
defined as follows:
\begin{equation} \label{Ak+1}
  A_{k}:=\frac{\omega_n \Arrowvert \nabla'_{e_n^{k}} u_{s^k} \rVert_{W^{2,2}(B_2)}}{\Arrowvert D^3 u_{s^k}\rVert_{L^2(B_1)}} , 
  \textrm{ for } k=0,1,2,... .
\end{equation}
By definition, $A_0\leq \varepsilon$, and  
 \begin{equation} \label{A1}
  A_{1}=\frac{\omega_n \Arrowvert \nabla'_{e_n^1} u_{s} \rVert_{W^{2,2}(B_2)}}{\Arrowvert D^3 u_s\rVert_{L^2(B_1)}}=
   \frac{\Arrowvert D^3 u\rVert_{L^2(B_1)}}{\Arrowvert D^3 u_{s}\rVert_{L^2(B_1)}} { \Arrowvert \nabla'_{e_n^1} u_{s} \rVert_{W^{2,2}(B_2)}}.
\end{equation}
Applying Proposition \ref{p3} and Lemma \ref{l6} for the function
$u \in \mathscr{B}_\kappa^\varrho({ \varepsilon }) $, we obtain
\begin{equation} \label{A1+}
\begin{aligned}
 \Arrowvert \nabla'_{e_n^{1}} u_{s}\rVert_{W^{2,2}(B_2)} \leq \tau
  \Arrowvert \nabla'_{e_n^1} u\rVert_{W^{2,2}(B_2)}  \\
 \leq \tau \left(  \Arrowvert \nabla' u\rVert_{W^{2,2}(B_2)}+2|e_n-e_n^1|   \Arrowvert \nabla u\rVert_{W^{2,2}(B_2)}\right) \\
\overset{\eqref{renorm}}{\leq}
   \tau \left(     \Arrowvert \nabla' u\rVert_{W^{2,2}(B_2)}+2C(n)\frac{\Arrowvert \nabla' u\rVert_{W^{2,2}(B_2)}}{|| D^3u | |_{L^2(B_1)}}  \Arrowvert \nabla u\rVert_{W^{2,2}(B_2)} \right)\\
    \overset{\eqref{boundedrescalings}}{\leq}
 \tau C(n,\kappa)   \Arrowvert \nabla' u\rVert_{W^{2,2}(B_2)}.
    \end{aligned}
\end{equation}
Let $ \beta:= \tau C(n,\kappa)$, and $ \beta <\gamma <1$ be fixed numbers. Then
\begin{equation*}
 \frac{\Arrowvert D^3 u\rVert_{L^2(B_1)}}{\Arrowvert D^3 u_{s}\rVert_{L^2(B_1)}}=
 \frac{s^\frac{n}{2} \Arrowvert D^3 u\rVert_{L^2(B_1)}}{\Arrowvert D^3 u \rVert_{L^2(B_{s})}} \\
 \leq
 \frac{s^\frac{n}{2} \omega_n }{s^\frac{n}{2} \omega_n - \sigma( A_0)}
 \leq \frac{\gamma}{\beta}.
\end{equation*}
according to Lemma
\ref{l4}, provided $  A_0 \leq \varepsilon$ is small depending on the parameter $s$ and dimension $n$.
The last inequality together with \eqref{A1} and \eqref{A1+} implies that 
  $ A_1 \leq \gamma \varepsilon$.

 \par
  We use an  induction argument to show that
 \begin{equation} \label{induction}
  A_{k} \leq \gamma^{k} \varepsilon, ~~~ \textrm{ for all } k \in \mathbb{N}_0
 \end{equation}
 for  fixed 1$> \gamma>\beta>\tau >\Lambda_n s>0$. 
 Assuming that \eqref{induction} holds for  $k\in \mathbb{N}$, we will show that $ A_{k+1} \leq \gamma A_k$.

\par
By the induction assumption
\begin{equation}
 \Arrowvert \nabla'_{e_n^{k}} U_{s^k }\rVert_{W^{2,2}(B_2)} = 
 \frac{\omega_n \Arrowvert \nabla'_{e_n^{k}} u_{s^k }\rVert_{W^{2,2}(B_2)} }{\Arrowvert D^3 
 u_{s^k}\rVert_{L^2(B_1)}} 
= A_k \leq  \gamma^k \varepsilon.
\end{equation}
Hence
\begin{equation*}
U_{s^k}= \frac{\omega_n u_{s^k}(x)}{\Arrowvert D^3 u_{s^k}\rVert_{L^2(B_1)}} 
\in \mathscr{B}_\kappa^\varrho({\gamma^k \varepsilon })
\end{equation*}
 in the coordinate system $\{ e_1^{k}, ..., e_n^{k}\}$. By definition, 
$\{ e_1^{k+1}, ..., e_n^{k+1}\} $ is a normalised coordinate system with respect to 
$ U_{2s^{k+1}}\in \mathscr{B}_\kappa^\varrho({\beta^k \varepsilon })$, and  by \eqref{tau}
\begin{equation}
  A_{k+1}=\frac{\omega_n \Arrowvert \nabla'_{e_n^{k+1}} u_{s^{k+1}} \rVert_{W^{2,2}(B_2)}}{\Arrowvert D^3 u_{s^{k+1}}\rVert_{L^2(B_1)}} \leq \frac{\omega_n \tau \Arrowvert \nabla'_{e_n^{k+1}} u_{s^{k}} \rVert_{W^{2,2}(B_2)}}{\Arrowvert D^3 u_{s^{k}}\rVert_{L^2(B_1)}} \frac{\Arrowvert D^3 u_{s^{k}}\rVert_{L^2(B_1)}} {\Arrowvert D^3 u_{s^{k+1}}\rVert_{L^2(B_1)}} .
\end{equation}
First we observe that 
\begin{equation} \label{norm21}
\begin{aligned}
 \frac{\Arrowvert D^3 u_{s^{k}}\rVert_{L^2(B_1)}}{\Arrowvert D^3 u_{s^{k+1}}\rVert_{L^2(B_1)}}=
 \frac{s^\frac{n}{2} \Arrowvert D^3 u_{s^{k}}\rVert_{L^2(B_1)}}{\Arrowvert D^3 u_{s^{k}}\rVert_{L^2(B_{s})}} \\
 \leq
 \frac{s^\frac{n}{2} \omega_n }{s^\frac{n}{2} \omega_n - \sigma(\gamma^{k} A_0)}
 \leq \frac{\gamma}{\beta},
\end{aligned}
\end{equation}
 according to Lemma
\ref{l4}, since $ U_{s^k} \in \mathscr{B}_\kappa^\varrho(\gamma^k \varepsilon)$ and 
$\gamma^{k} \varepsilon <  \varepsilon $ is small.

Next we estimate 
\begin{equation}\label{indklong}
\begin{aligned}
 \Arrowvert \nabla'_{e_n^{k+1}} u_{s^k} \rVert_{W^{2,2}(B_2)}\leq 
  \Arrowvert \nabla'_{e_n^k} u_{s^k} \rVert_{W^{2,2}(B_2)}+2|e_n^{k+1}-e_n^k |  \Arrowvert \nabla u_{s^k} \rVert_{W^{2,2}(B_2)}\\
 \overset{\eqref{renorm}}{ \leq} 
  \Arrowvert \nabla'_{e_n^k} u_{s^k} \rVert_{W^{2,2}(B_2)}+\frac{C(n) \Arrowvert \nabla'_{e_n^k} u_{s^{k}} \rVert_{W^{2,2}(B_2)} }{{\Arrowvert D^3 u_{s^{k}}\rVert_{L^2(B_{1})}} } \Arrowvert \nabla u_{s^k} \rVert_{W^{2,2}(B_2)}\\
 \overset{\eqref{boundedrescalings}} {\leq }
  C(n,\kappa)  \Arrowvert \nabla'_{e_n^k} u_{s^k} \rVert_{W^{2,2}(B_2)}.
  \end{aligned}
\end{equation}

Finally we obtain from  \eqref{norm21} and \eqref{indklong} that
\begin{equation}
 A_{k+1} \leq \frac{ \omega_n \tau C(n,\kappa) \gamma }{\beta} \frac{\Arrowvert \nabla'_{e_n^k} u_{s^k} \rVert_{W^{2,2}(B_2)}}{\Arrowvert D^3 u_{s^k}\rVert_{L^2(B_1)}}= \gamma A_k \leq \gamma^{k+1} A_0,
\end{equation}
this completes the proof of inequality \eqref{induction}.

\par
Next we show that $ \{ e_n^k\}$ is a Cauchy sequence by using \eqref{renorm} and \eqref{induction}. Indeed for any 
$  m, k \in \mathbb{ N }$, 
 \begin{align*}
  \lvert e_n^{k+m}-e_n^k \rvert \leq  \sum_{l=1}^m  \lvert e_n^{k+l}-e_n^{k+l-1} \rvert \leq 
  C(n) \sum_{l=1}^m \Arrowvert \nabla'_{e_n^{k+l-1}}  U_{2s^{k+l}} \rVert_{L^2(B_2)} \\
\leq C(n) \sum_{l=1}^m A_{k+l-1} 
 \leq {C(n) A_0 } \sum_{l=1}^m \gamma^{k+l-1} \leq
\frac {C(n) A_0 }{ (1-\gamma)} \gamma^k,
 \end{align*}
hence $ e_n^k \rightarrow \eta_0 $, as $k\rightarrow \infty$ for some $\eta_0 \in \mathbb{R}^n$, $\lvert \eta_0\rvert=1 $ and
\begin{equation} \label{etaenk}
 \lvert \eta_0 -e_n^k \rvert \leq C'(n) A_0 \gamma^k \leq C'(n) \gamma^k \varepsilon,
\end{equation}
 in particular $ \lvert \eta_0 -e_n \rvert \leq C'(n)\varepsilon$.

 \par
 Now the inequality \eqref{ralpha} follows 
 via a standard iteration argument.
 Let $ 0<\alpha <1$ be any number,  choose  $s=s(n,\alpha) $ small, satisfying the assumption in Proposition \ref{p3}, and such that $\gamma= C_n s <s^\alpha $. 
 If $ 0<r\leq s$, then there exists $ k\in \mathbb{N}_0$, such that $ s^{k+1}\leq r <s^k$. Hence
 \begin{equation} \label{mainineq}
 \begin{aligned}
  \frac{\Arrowvert \nabla'_{\eta_0} u_{r}\rVert_{W^{2,2}(B_2)} }{\Arrowvert D^3 u_{r}\rVert_{L^2(B_1)} }\leq 
 C \frac{\Arrowvert \nabla'_{\eta_0} u_{s^k}\rVert_{W^{2,2}(B_2)} }{\Arrowvert D^3 u_{s^k}\rVert_{L^2(B_1)} } \leq \\
 C \frac{\Arrowvert \nabla'_{e_n^k} u_{s^k}\rVert_{W^{2,2}(B_2)} +2|e_n^k- \eta_0| \Arrowvert \nabla u_{s^k}\rVert_{W^{2,2}(B_2)}}{\Arrowvert D^3 u_{s^k}\rVert_{L^2(B_1)} }
 \leq 
 C \gamma^k \varepsilon\leq  C s^{\alpha k} \varepsilon \leq C r^\alpha \varepsilon,
 \end{aligned}
 \end{equation}
where $C$ depends on the space dimension and on the given parameters.
 
 \end{proof}

 \par
 Now we are ready to prove the $C^{1,\alpha}$-regularity of the free boundary.

\begin{theorem} \label{th1}
Let  $0 <\alpha <1$ be a given number.
 Assume that $  u \in \mathscr{B}_\kappa^\varrho(\varepsilon)$, with an $\varepsilon> 0$
 small, depending on $ \alpha $ and the space dimension. Then there exists $ 0<r_0<1$ depending on the given parameters, such that $\Gamma_u \cap B_{r_0} $ is a  $C^{1,\alpha}$-graph  and the $C^{1,\alpha}$-norm of the graph is bounded by $ C\varepsilon$.
\end{theorem}

\begin{proof}
 Let $ 0<\alpha <1$ and  fix $ s=s(n, \alpha)>0 $ small  as in \eqref{mainineq}.
It follows from Lemma \ref{l7} that for
$ u\in \mathscr{B}^\varrho_{\kappa}(\varepsilon)$
\begin{equation*}
\frac{ \Arrowvert \nabla' u_r \rVert_{W^{2,2}(B_2)}}{\Arrowvert D^3 u_r \rVert_{L^2(B_1)}} \leq C r^\alpha
\rightarrow 0 ~ \textrm{ as } ~ r\rightarrow 0,
 \end{equation*}
 after a change of variable, by choosing
$ e_n = \eta_0 $, where $\eta_0$ is the same vector as in Lemma \ref{l7}.
Then 
\begin{equation*}
\frac{\omega_n  u_r(x)}{\Arrowvert D^3u_r \rVert_{L^2(B_1)}} \rightarrow \frac{1}{6} (x_n)_+^3
\end{equation*}
according to
Lemma \ref{l4}.

\par
So we have shown that in the initial coordinate system,
\begin{equation}
\frac{ \omega_n u(rx)}{ r^3 \Arrowvert D^3u_r\rVert_{L^2(B_1)}} \rightarrow \frac{1}{6} (\eta_0 \cdot x )_+^3 
\textrm{ in } W^{3,2} (B_1)\cap C^{1,\alpha}(B_1), \textrm{ as } r\rightarrow 0,
\end{equation}
 and therefore $ \eta_0 $ is the measure theoretic normal to $\Gamma_u$
at the origin.

\par
 Now let $ x_0 \in \Gamma_u \cap B_{s}$ be a free boundary point, and consider the function 
 $  u_{{x_0,{1/2}}}(x) = \frac{u(x/2+x_0)}{(1/2)^3}, x \in B_2$, then
 \begin{align*}
 U_{x_0}(x):= \frac{\omega_n  u_{{x_0}, 1/2}(x)}{  \Arrowvert D^3 u_{x_0, 1/2}\rVert_{L^2(B_1)}}\in 
 \mathscr{B}^\varrho_\kappa ({ C(n) \varepsilon}).
 \end{align*}
According to Lemma \ref{l7}, $U_{x_0}$ has a unique blow-up
 \begin{equation*}
U_{r, x_0}(x):=\frac{U_{x_0}(rx)}{r^3}= \frac{\omega_n  u_{{x_0}, 1/2}(rx)}{ r^3 \Arrowvert D^3 u_{x_0, 1/2}(rx)\rVert_{L^2(B_1)}}\rightarrow \frac{1}{6}(\eta_{x_0}\cdot x)_+^3 .
 \end{equation*}
 and therefore $\eta_{x_0}$ is the normal to $\Gamma_u$ at $x_0$.
 
 \par
 Next we show that $\eta_x$ is a H\"{o}lder continuous function on $ \Gamma_u \cap B_s$. 
If $ x_0 \in  \Gamma_u \cap B_s$, then $ s^{k+1}< \lvert x_0 \rvert \leq s^k $, for some
$ k\in \mathbb{ N }_0$.
Hence $ \Arrowvert \nabla'_{\eta_0} U_{s^k, x_0}\rVert_{W^{2,2}(B_2)} \leq C \gamma^k \varepsilon$, and
$ \Arrowvert \nabla'_{\eta_{x_0}}U_{r, x_0} \rVert_{W^{2,2}(B_1)} \rightarrow 0$ as $ r\rightarrow 0$.
  Applying Lemma \ref{l7} for the function $ U_{s^k,x_0} \in \mathscr{B}^\varrho_\kappa ({ C \gamma^k \varepsilon})$, we obtain
 \begin{equation} \label{hol}
  \lvert \eta_{x_0} -\eta_0 \rvert
  \leq C \gamma^k \varepsilon
  \leq \frac{C}{\gamma} \lvert x_0 \rvert^{\alpha} \varepsilon.
 \end{equation}
Furthermore, the inequality 
 \begin{equation*}
  \lvert \eta_x -\eta_y \rvert \leq C\lvert x-y \rvert^\alpha  \varepsilon ,  ~~~ \textrm{ for any } x,y \in \Gamma_u
 \end{equation*}
 follows from  \eqref{hol}.

\end{proof}

\section{On the regularity of the solution}

In this section we study the regularity of the solution to the biharmonic obstacle problem. 
Assuming that $ u\in \mathscr{B}_\kappa^\varrho(\varepsilon)$, with $ \varepsilon>0$ small, we derive from Theorem
\ref{th1} that
$ u\in C^{2,1}_{loc} (B_1)$. 
In the end we provide an example showing that without the NTA domain assumption, there exist solutions, that are not 
$C^{2,1}$.

\subsection{ $C^{2,1}$-regularity of the solutions in  $ \mathscr{B}^\varrho_\kappa(\varepsilon)$}
 
\par
After showing the $C^{1,\alpha}$-regularity
of the free boundary $\Gamma_u \cap B_{r_0}$,
 we may go further to derive improved regularity for the solution $ u\in \mathscr{B}_\kappa^\varrho(\varepsilon)$.

\begin{theorem} \label{th2}
 Let $ u\in \mathscr{B}_\kappa^\varrho(\varepsilon)$ be the solution to the biharmonic obstacle problem 
 in $\Omega \supset \supset B_2$, and let $ 0<\alpha <1$ be a fixed number.
 Then there exists $r_0>0$ such that  $u\in C^{2,1}(B_{r_0})$, provided $\varepsilon=\varepsilon(\kappa,\varrho, \alpha)$ is  small. Furthermore, the following estimate holds
 \begin{equation*}
  \Arrowvert u \rVert_{C^{3,\alpha}(\overline{\Omega}_u \cap B_{r_0} )} \leq  
  C(n) \Arrowvert u \rVert_{W^{2,2}(B_2)}\leq C(n) \kappa,
 \end{equation*}
 where $C(n)$ is just a dimensional constant.
\end{theorem}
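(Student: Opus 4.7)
My plan is to combine the $C^{1,\alpha}$-regularity of $\Gamma_u$ from Theorem \ref{th1} with the flatness decay of Lemma \ref{l7} to obtain the stated regularity. The proof splits into two main steps: first showing that $\Delta u$ inherits $C^{1,\alpha}$-regularity up to the free boundary, and then combining this with the quantitative flatness decay to deduce $C^{3,\alpha}$-regularity of $u$ up to $\Gamma_u$ and $C^{2,1}$-regularity globally.

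First I would show that the trace of $\Delta u$ on $\Gamma_u$ vanishes. Since $u\equiv 0$ on the interior of the coincidence set, also $\Delta u\equiv 0$ there. Because $\Delta u \in W^{1,2}_{loc}(B_2)$ by Corollary \ref{cor23}, its trace on the $C^{1,\alpha}$-surface $\Gamma_u$ is single-valued and, from the coincidence-set side, equals zero. Consequently $\Delta u$ is harmonic on the $C^{1,\alpha}$-domain $\Omega_u\cap B_{1/2}$ with zero Dirichlet data on $\Gamma_u\cap B_{1/2}$, and boundary Schauder estimates for harmonic functions in $C^{1,\alpha}$-domains yield $\Delta u \in C^{1,\alpha}(\overline{\Omega_u}\cap B_{1/3})$.

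To upgrade from $\Delta u \in C^{1,\alpha}$ to $u \in C^{3,\alpha}$ up to $\Gamma_u$, I would use the flatness decay of Lemma \ref{l7} applied at each free boundary point, exactly as in the proof of Theorem \ref{th1}. This produces, uniformly for $x_0 \in \Gamma_u \cap B_{1/4}$, a unit vector $\eta_{x_0}$ depending H\"older continuously on $x_0$ and a positive constant $c(x_0)$ such that
\begin{equation*}
\|u - P_{x_0}\|_{W^{3,2}(B_r(x_0))} \leq C r^{n/2 + \alpha}\varepsilon,\quad P_{x_0}(x) := \tfrac{c(x_0)}{6}(\eta_{x_0}\cdot(x-x_0))_+^3.
\end{equation*}
Since $\Gamma_u$ differs from its tangent hyperplane at $x_0$ by at most $Cr^{1+\alpha}$ in $B_r(x_0)$, one has $\lvert P_{x_0}(x) - Q_{x_0}(x)\rvert = O(r^{3+3\alpha})$ on $\overline{\Omega_u}\cap B_r(x_0)$, where $Q_{x_0}(x) := \tfrac{c(x_0)}{6}(\eta_{x_0}\cdot(x-x_0))^3$ is an honest cubic polynomial with constant third derivatives. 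Combined with interior biharmonic regularity inside $\Omega_u$, this Morrey--Campanato-type cubic approximation, uniform in $x_0$, yields $u \in C^{3,\alpha}(\overline{\Omega_u}\cap B_{1/4})$ with the claimed norm bound.

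For the global $C^{2,1}(B_{1/4})$-regularity, I would use that the degree-two Taylor polynomial of $u$ at every $x_0\in \Gamma_u$ vanishes identically: $u$ and $\nabla u$ vanish on $\Gamma_u$ because $u\geq 0$ is identically zero on the coincidence set and $u\in C^{1,\alpha}$; tangential second derivatives vanish by differentiating $\nabla u=0$ along $\Gamma_u$; and the pure normal second derivative equals $\Delta u|_{\Gamma_u}=0$ by the first step. Hence $\lvert u(x)\rvert \leq \lvert u(x)-P_{x_0}(x)\rvert + \lvert P_{x_0}(x)\rvert \leq C\lvert x-x_0\rvert^3$ on $B_r(x_0)$, and since $P_{x_0}$ is itself globally $C^{2,1}$, $u\in C^{2,1}(B_{1/4})$ follows by a Campanato argument applied across $\Gamma_u$. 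The main obstacle I expect is the Campanato step in the third paragraph: one must carefully combine the cubic approximation at free boundary points with interior biharmonic regularity to deduce the uniform $C^{3,\alpha}$-bound at the optimal H\"older exponent, rather than losing a factor because of the discrepancy between $P_{x_0}$ and $Q_{x_0}$ on the $O(r^{1+\alpha})$-thin strip between $\Gamma_u$ and its tangent hyperplane.
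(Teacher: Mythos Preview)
Your first step---showing that $\Delta u=0$ on $\Gamma_u$ in the trace sense and then applying boundary Schauder estimates for harmonic functions on $C^{1,\alpha}$ domains to conclude $\Delta u\in C^{1,\alpha}(\overline{\Omega_u}\cap B_{3/4})$---is exactly what the paper does. The gap is in your second step. The decay estimate
\[
\|u-P_{x_0}\|_{W^{3,2}(B_r(x_0))}\leq Cr^{n/2+\alpha}\varepsilon
\]
does \emph{not} follow from Lemma~\ref{l7}. That lemma controls only $\|\nabla'_{\eta_{x_0}}u_r\|_{W^{2,2}(B_1)}$ with rate $r^\alpha$, i.e.\ the tangential part of the gradient. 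To convert smallness of $\nabla'_{\eta_{x_0}}u$ into closeness to the specific one-dimensional profile $\tfrac{c(x_0)}{6}(\eta_{x_0}\cdot x)_+^3$ you must invoke Lemma~\ref{l4}, and that lemma yields only a qualitative modulus of continuity $\sigma(\cdot)$, not a linear bound: you obtain $\sigma(Cr^\alpha\varepsilon)$ in place of $Cr^\alpha\varepsilon$, which is insufficient for a Campanato characterisation of $C^{3,\alpha}$. In addition, the constant $c(x_0)$ in your profile $P_{x_0}$ is not supplied by the paper's arguments; what Lemma~\ref{l4} actually gives is closeness of the \emph{normalised} rescaling $U_r=\omega_n u_r/\|D^3u_r\|_{L^2(B_1)}$ to $\tfrac16(x_n)_+^3$, so the effective profile constant varies with the scale $r$, and its convergence as $r\to0$ has not been established. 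The obstacle you flag (the $P_{x_0}$ vs.\ $Q_{x_0}$ discrepancy) is not the real one.

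The paper's route is much shorter and avoids all of this. After step one, Calder\'on--Zygmund and Sobolev embedding give $u\in C^{2,\alpha}(B_{1/2})$; in particular each $u_{ij}:=\partial_{x_i}\partial_{x_j}u$ is continuous up to $\Gamma_u$ and vanishes there. Now $u_{ij}$ is a weak solution of $\Delta u_{ij}=\partial_{x_i}f_j$ in $\Omega_u$ with $f_j:=\partial_{x_j}\Delta u\in C^\alpha(\overline{\Omega_u}\cap B_{3/4})$ (from step one) and with zero Dirichlet data on $\Gamma_u$. A second application of the boundary Schauder estimate on $C^{1,\alpha}$ domains (Gilbarg--Trudinger, Corollary~8.36) then gives $u_{ij}\in C^{1,\alpha}(\overline{\Omega_u}\cap B_{1/4})$, i.e.\ $u\in C^{3,\alpha}(\overline{\Omega_u}\cap B_{1/4})$. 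The global $C^{2,1}(B_{1/4})$ conclusion follows immediately from $\|D^3u\|_{L^\infty(B_{1/4})}\leq \|D^3u\|_{C^{0,\alpha}(\overline{\Omega_u}\cap B_{1/4})}$, since $u\equiv0$ on the coincidence set; your Taylor-expansion argument for this last point is correct but not needed.
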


\begin{proof}

According to Theorem \ref{th1}, $\Gamma_u \cap B_s$ is a graph of a $C^{1,\alpha}$-function. 
We know that $ \Delta u \in W^{1,2}(B_2) $ is a harmonic function in $ \Omega_u:= \{u> 0\}$, and also
$ u \in W^{3,2}(B_2)$, $ u \equiv 0 $ in $ \Omega \setminus \Omega_u$, hence 
$\Delta u =0$ on $\Gamma_u= \partial \Omega_u \cap B_2$ in the trace sense. 
Therefore we may apply Corollary 8.36 in  \cite{GT}, to conclude that
$\Delta u \in C^{1,\alpha} ( (\Omega_u \cup \Gamma_u)  \cap B_{3s/4})$, and 
\begin{equation}
 \Arrowvert \Delta u \rVert_{ C^{1,\alpha} (\overline{ \Omega }_u \cap B_{3s/4}) } \leq C(n) 
 \Arrowvert \Delta u \rVert_{L^\infty( B_1)}.
\end{equation}

\par
It follows from the Calder\'{o}n-Zygmund estimates that $u \in W^{3,p}(B_{s/2})$, for any
$p < \infty$. According to the Sobolev embedding theorem,
$ u \in C^{2,\alpha}(B_{s/2})$, for all $\alpha < 1 $, with the following estimate
\begin{equation} \label{c2alpha}
 \Arrowvert u \rVert_{C^{2,\alpha}(\overline{ \Omega_u } \cap B_{s/2}) } \leq C(n)
 \left(  \Arrowvert \Delta u \rVert_{C^{1,\alpha}(\overline{\Omega}_u \cap B_{3s/4})}+ 
 \Arrowvert u \rVert_{C^{1,\alpha}(B_{3/4})}
 \right).
\end{equation}

\par
Denote by $u_{ij}:= \frac{\partial^2 u}{\partial x_i \partial x_j}$.
Then $ u_{ij} \in W^{1,2}(B_1)\cap C^\alpha(\Omega_u \cap B_{3s/4})$ is a weak solution of 
$ \Delta  u_{ij}= \frac{\partial f_j}{\partial x_i}$ in $ \Omega_u \cap B_{3s/4} $, where
$ f_j:= \frac{\partial \Delta u }{\partial x_j} \in C^\alpha( \Omega_u \cap B_{3/4} )$.
Taking into account that $ u_{ij}=0 $ on $ \partial \Omega_u \cap B_{1/2} $, we may apply 
Corollary 8.36 in  \cite{GT} once again and conclude that 
\begin{equation*}
 \Arrowvert u_{ij}\rVert_{C^{1,\alpha}( \overline{\Omega}_u \cap B_{s/4}) } \leq 
 C(n)\left( \Arrowvert u_{ij}\rVert_{C^{0}( \overline{\Omega}_u \cap B_{s/2})}+
 \Arrowvert \Delta u \rVert_{C^{1,\alpha}( \overline{\Omega}_u\cap B_{3s/4})}
 \right),
\end{equation*}
hence 
\begin{equation*}
 \Arrowvert D^2 u\rVert_{C^{1,\alpha}( \overline{\Omega}_u \cap B_{s/4})} \leq 
 C'_n \left(   \Arrowvert \Delta u \rVert_{C^{1,\alpha}(\overline{\Omega}_u \cap B_{3s/4})}+ 
 \Arrowvert u \rVert_{C^{1,\alpha}(B_{3/4})} \right),
\end{equation*}
according to \eqref{c2alpha}.

\par
Therefore we obtain
\begin{align*}
 \Arrowvert u \rVert_{C^{3,\alpha}(\overline{\Omega}_u \cap B_{s/4} )} \leq  
 \Arrowvert u \rVert_{C^{1,\alpha}(B_{3/4})}+
  \Arrowvert D^2 u\rVert_{C^{1,\alpha}( \overline{\Omega}_u \cap B_{s/4})} \\
  \leq
  C(n) \left(   \Arrowvert \Delta u \rVert_{C^{1,\alpha}(\overline{\Omega}_u \cap B_{3s/4})}+ 
 \Arrowvert u \rVert_{C^{1,\alpha}(B_{3s/4})} \right).
\end{align*}

\par
Taking into account that 
\begin{equation*}
 \Arrowvert D^3 u \rVert_{L^\infty(B_{s/4} ) } \leq  
 \Arrowvert D^3 u\rVert_{C^{0,\alpha}( \overline{\Omega}_u \cap B_{s/4})},
\end{equation*}
we see that $ u\in C^{2,1}( B_{s/4} )$.

\end{proof}

\subsection{In general the solutions are not better than $C^{2,\frac{1}{2}}$}

\par
Let us observe that the assumption $u \in \mathscr{B}_\kappa^\varrho (\varepsilon) $ is essential in the proof
of $u\in C^{2,1}(B_r)$. The next example shows that without our flatness assumptions
there exists a solution to the biharmonic
obstacle problem in $\mathbb{R}^2$, that do not possess $C^{2,1}$- 
regularity.

\begin{example} \label{c21}
 Consider the following function given in polar coordinates in $\mathbb{R}^2$,
 \begin{equation}
  u(r,\varphi)= r^{\frac{5}{2}}\left(  \cos \frac{\varphi}{2} -\frac{1}{5}\cos \frac{5 \varphi}{2} \right), ~~ r \in [0,1), ~~ \varphi \in [-\pi, \pi)
 \end{equation}
then $u \in C^{2, \frac{1}{2}}$ is the solution to the biharmonic zero-obstacle problem in the unit ball $B_1 \subset \mathbb{R}^2$.
\end{example}
\begin{proof}
It is easy to check that $ u \geq 0 $, $ u(x)=0$ if and only if $-1 \leq  x_1 \leq 0$ and $ x_2 =0$. Hence
the set $ \Omega_u = \{u >0\}$ is not an NTA domain, since the complement of  $ \Omega_u $
does not satisfy the corkscrew condition.

\par
Let us show that $ \Delta^2 u $ is a nonnegative measure supported on $[-1,0]\times \{0 \}$.
 For any nonnegative $ f \in C_0^\infty (B_1)$, we compute
 \begin{align*}
  \int_{B_1} \Delta u(x) \Delta f(x)dx =\int_{0}^1 \int_{-\pi}^\pi r  \Delta f(r,\varphi) 
\Delta u(r, \varphi) d \varphi d r \\
 = 6 \int_{0}^1 \int_{-\pi}^\pi r^{\frac{3}{2}}  \Delta f(r,\varphi) 
\cos \frac{\varphi}{2} d \varphi d r 
  =6 \int_0^1 {r^{- \frac{1}{2}}} {f(r,\pi)} dr \geq 0,  
 \end{align*}
where we used integration by parts, and that $ f$ is compactly supported in $ B_1$.

\par
We obtain that $ u $ solves the following variational inequality,
\begin{equation} \label{varin}
 u \geq 0, ~ \Delta^2 u \geq 0, ~ u \cdot \Delta^2 u =0.
\end{equation}
Now we show that $ u$
is the unique minimizer to the following zero-obstacle problem: 
minimize the functional \eqref{J} over 
\begin{align*}
 \mathscr{A}:=\{v \in W^{2,2}(B_1), v \geq 0, \textrm{ s.t. } v= u, 
\frac{\partial v}{\partial n }= \frac{\partial u}{\partial n}, \textrm{ on } \partial B_1 \}
\neq \emptyset.
\end{align*}
According to Lemma \ref{1} there exists a unique minimizer, let us call it $ v$. 
It follows from \eqref{varin}, that
\begin{align*}
 \int_{B_1} \Delta u \Delta (v-u)= \int_{B_1} (v-u) \Delta^2 u  = \int_{B_1} v \Delta^2 u \geq 0.
\end{align*}
Hence 
\begin{align*}
 \int_{B_1} (\Delta u)^2 \leq \int_{B_1} \Delta u \Delta v \leq  
 \left( \int_{B_1} (\Delta u)^2 \right)^\frac{1}{2}   \left( \int_{B_1} (\Delta v)^2 \right)^\frac{1}{2},
\end{align*}
where we used the H\"{o}lder inequality in the last step. 
Therefore we obtain
\begin{align*}
  \int_{B_1} (\Delta u)^2 \leq  \int_{B_1} (\Delta v)^2, 
\end{align*}
thus $ u \equiv v$, and $u$ solves the biharmonic zero-obstacle problem in the unit ball.

\par
However
 $ \Delta u= 6 r^{\frac{1}{2}} \cos \frac{\varphi}{2}$, which implies that $u$ is
 $ C^{2,\frac{1}{2}}$, and that the exponent $ \frac{1}{2}$ is optimal, in particular $u$ is not 
 $ C^{2,1}$.

\end{proof}

\appendix

\section {Estimates on derivatives of biharmonic functions }

In this part of the paper estimates on derivatives of biharmonic functions are obtained. 
We believe that these estimates are known, but we could not find a reference, and therefore included in the paper.

\begin{lemma}
Let $v$ be a biharmonic function in the ball $ B_1 \subset \mathbb{R}^n$, and assume that $B_r(x_0)\subset B_1$. Then
\begin{equation} \label{Tinduction}
|D^\alpha v (x_0)|\leq \frac{(2^{n+1}nk)^k }{|B_1| r^{n+k}} \left( \| v \|_{L^1(B_r(x_0))}+
\frac{r^2}{2(n+2)}\| \Delta v \|_{L^1(B_r(x_0))} \right),
\end{equation}
where $ \alpha$ is a multiindex, and $k=|\alpha|$.
\end{lemma}
\begin{proof}
The following mean value properties are known for biharmonic functions
\begin{equation}
\label{MVP_sphere}
v(x_0)=\fint_{\partial B_r(x_0)} vdS -\frac{r^2}{2n} \Delta v(x_0)
\end{equation}
and
\begin{equation}
\label{MVP_ball}
v(x_0)=\fint_{B_r(x_0)} vdx -\frac{r^2}{2(n+2)} \Delta v(x_0).
\end{equation}
The proofs of \eqref{MVP_sphere} and \eqref{MVP_ball} are similar to the proofs of the mean value properties for harmonic functions.  For a fixed $x_0$, let $ \phi(r):=\fint_{\partial B_r(x_0)} vdS$. It is easy to see  by Green's formula that 
\begin{equation}
\label{derMVP}
 \phi^\prime (r)=\frac{r}{n} \fint_{B_r(x_0)}\Delta v dx=
\frac{r}{n} \Delta v(x_0),
\end{equation}
 where we also used the mean value property for the harmonic function
 $\Delta v$.  Hence \eqref{MVP_sphere} follows by integrating \eqref{derMVP} in the interval $(0,r)$.

\par
Now \eqref{MVP_ball} can be shown by using \eqref{MVP_sphere}  and the  co-area formula;
\begin{equation*}
\begin{aligned}
\fint_{B_r(x_0)}vdx=|B_r|^{-1}\int_0^r \int_{\partial B_s(x_0)} vdS ds=
\int_0^r \frac{n s^{n-1}}{r^n} \left(v(x_0)+\frac{s^2}{2n} \Delta v(x_0)  \right) ds \\
=v(x_0) \frac{n}{r^n} \int_0^rs^{n-1} ds+\Delta v(x_0)\frac{ 1}{2r^n }\int_0^rs^{n+1} ds
=v(x_0)+\Delta v(x_0)\frac{ r^2 }{2(n+2)}.
\end{aligned}
\end{equation*}

\par
Let us proceed to the proof of \eqref{Tinduction}.  Estimate \eqref{Tinduction} is well known  for harmonic functions, which will be used to show that it also holds for biharmonic functions.
We follow the proof of estimates on derivatives of harmonic functions (see for instance \cite{evans}), and employ \eqref{MVP_sphere} and \eqref{MVP_ball}.
The proof uses an argument of induction on $ k=| \alpha|$.
The formula \eqref{MVP_ball} implies that 
\begin{equation}
\label{ }
|v(x_0)|\leq \frac{1}{r^n |B_1|}\left( \|v\|_{L^1(B_r(x_0))}+ \frac{r^2}{2(n+2)}  \|\Delta v\|_{L^1(B_r(x_0))} \right).
\end{equation}
Let $ k=1$, then 
\begin{equation}
\begin{aligned}
v_{x_i}(x_0)=\fint_{B_{r/2}(x_0)} v_{x_i}dx -\frac{r^2}{2^3(n+2)} \fint_{B_{r/2}(x_0)}\Delta v_{x_i} dx \\
=\frac{2^n}{|B_1| r^n} \int_{\partial B_{r/2}(x_0)} v{\nu_i}dS -\frac{r^22^{n-2}}{2(n+2)|B_1| r^n} \int_{\partial B_{r/2}(x_0)}{\nu_i} \Delta vdS.
\end{aligned}
\end{equation}
hence
\begin{equation}
\begin{aligned}
|v_{x_i}(x_0)|\leq \frac{2n \|v\|_{L^\infty(\partial B_{r/2}(x_0))}}{ r} +
\frac{rn \| \Delta v\|_{L^\infty(\partial B_{r/2}(x_0))}}{2^2(n+2) } \\
 \leq 
 \frac{ 2^{n+1}n \|v\|_{L^1( B_{r}(x_0))}}{ |B_1|  r^{n+1}}
 + \frac{n 2^{n-1}r^2\|\Delta v\|_{L^1( B_{r}(x_0))}}{ 2(n+2)|B_1|  r^{n+1}}  
 +\frac{r^2 n 2^{n-1}\|\Delta v\|_{L^1(B_{r}(x_0))}}{2(n+2) |B_1|  r^{n+1}} \\
 \leq 
 \frac{n2^{n+1}}{|B_1|  r^{n+1}}\left( \|v\|_{L^1(B_{r}(x_0))}
 +\frac{r^2}{ 2^2(n+2)}\|\Delta v\|_{L^1(B_{r}(x_0))} \right).
 \end{aligned}
\end{equation}
Assuming that \eqref{Tinduction} is true for $ k-1$, we will show that it is true for $k$. Let $|\alpha|=k$, and 
$ D^\alpha v=(D^\beta v)_{x_i} $, where $ |\beta |=k-1$.
By \eqref{MVP_ball}
\begin{equation}
\begin{aligned}
D^\alpha v(x_0)= \fint_{B_{r/k}(x_0)} D^\alpha v dx -\frac{r^2}{k^2 2(n+2)} D^\alpha \Delta v(x_0)  \\
=\frac{k^n}{|B_1|  r^n} \int_{\partial B_{r/k}(x_0)} D^\beta v {\nu_i}dS -
\frac{r^2}{k^2 2(n+2)} D^\alpha \Delta v(x_0)
\end{aligned}
\end{equation}
Hence
\begin{equation}
\begin{aligned}
|D^\alpha v(x_0)| \leq
\frac{kn}{r}  \|D^\beta v \|_{L^\infty(\partial B_{r/k(x_0)})} +
\frac{r^2}{k^2 2(n+2)}| D^\alpha \Delta v(x_0)|.
\end{aligned}
\end{equation}
If $ x \in \partial B_{r/k}(x_0)$, then $ B_{r(k-1)/k}(x) \subset B_r(x_0)$, and by induction assumption
\begin{equation}
\begin{aligned}
|D^\alpha v (x_0)| 
\leq \frac{kn (k-1)^{k-1} (2^{n+1}n)^{k-1}k^{n+k-1}}{r |B_1|  r^{n+k-1} (k-1)^{n+k-1}} \| v\|_{L^1(B_{r}(x_0))} \\
+\frac{ r^2(k-1)^2kn (k-1)^{k-1} (2^{n+1}n)^{k-1}k^{n+k-1}}{k^2 2(n+2)r |B_1|  r^{n+k-1} (k-1)^{n+k-1}} 
\|\Delta v\|_{L^1(B_{r}(x_0))} \\
+\frac{r^2(2^{n+1}nk)^k}{|B_1|  r^{n+k}k^22(n+2)}
\|\Delta v\|_{L^1(B_{r}(x_0))} \leq 
\frac{(kn2^{n+1})^k}{|B_1|  r^{n+k}}\| v\|_{L^1(B_{r}(x_0))} \\
+\frac{(kn2^{n+1})^k((k-1)^2+1)}{2(n+2)|B_1| k^2 r^{n+k-2}}\| \Delta v\|_{L^1(B_{r}(x_0))}\\
\leq \frac{(kn2^{n+1})^k}{|B_1|  r^{n+k}}
\left( \| v\|_{L^1(B_{r}(x_0))}+\frac{r^2}{2(n+2)}   \|\Delta  v\|_{L^1(B_{r}(x_0))}\right).
\end{aligned}
\end{equation}
\end{proof}

\addcontentsline{toc}{section}{\numberline{}References}

\bibliographystyle{plain}
\bibliography{biharmonic}

\begin{thebibliography}{1}

\bibitem{ALS}
Andersson John.
\newblock Almost everywhere regularity for the free boundary of the normalized
  p-harmonic obstacle problem p{$>$}2.
  \newblock {\em arXiv:1611.04397}

\bibitem{CF}
Caffarelli Luis A.;~Friedman Avner.
\newblock The obstacle problem for the biharmonic operator.
\newblock {\em Ann. Scuola Norm. Sup. Pisa}, Cl. Sci. (4) 6, no. 1:151--184.,
  1979.
  

  
  \bibitem{Duffin}
Duffin R. J. 
\newblock Continuation of biharmonic functions by reflection.
\newblock {\em  Duke Math. J.  22  (1955), 313--324. }


\bibitem{evans}
Evans Lawrence~C.
\newblock {\em Partial differential equations.}
\newblock Graduate Studies in Mathematics, 19. American Mathematical Society,
  Providence, RI, 2010. xxii+749 pp. ISBN: 978-0-8218-4974-3.
  
  
  
\bibitem{Fr}
Frehse Jens.
\newblock On the regularity of the solution of the biharmonic variational
  inequality.
\newblock {\em Manuscripta Math. 9 (1973), 91--103.}

\bibitem{Fr71}
Frehse Jens . 
\newblock Zum Differenzierbarkeitsproblem bei Variationsungleichungen h{\"o}herer
 Ordnung.
 \newblock {\em Abh. Math. Sem. Univ. Hamburg  36  (1971), 140--149.}
 
  
\bibitem{Fri82}
Friedman Avner.
\newblock {\em Variational principles and free-boundary problems.}
\newblock A Wiley-Interscience Publication. Pure and Applied Mathematics. John
  Wiley \& Sons, Inc., New York, 1982. ix+710 pp. ISBN: 0-471-86849-3.


\bibitem{GT}
Gilbarg David; Trudinger~Neil S.
\newblock {\em Elliptic partial differential equations of second order.}
\newblock Classics in Mathematics. Springer-Verlag, Berlin, 2001, Reprint of
  the 1998 edition.


  
  \bibitem{Huber}
Huber Alfred. 
\newblock The reflection principle for polyharmonic functions.
\newblock {\em Pacific J. Math.  5,  (1955). 433--439. }

\bibitem{JK}
Jerison David S.; Kenig~Carlos E.
\newblock Boundary behavior of harmonic functions in nontangentially accessible
  domains.
\newblock {\em Adv. in Math. 46, no. 1, 80--147.}, 1982.

  






\bibitem{L}
Landkof N.~S. .
\newblock {\em Foundations of modern potential theory}.
\newblock Springer-Verlag, 1972. x+424 pp.


\end{thebibliography}

\end{document}